\DeclarePairedDelimiter{\floor}{\lfloor}{\rfloor}
\DeclarePairedDelimiter{\ceil}{\lceil}{\rceil}
\newtheorem{theorem}{Theorem}[section]
\newtheorem{proposition}[theorem]{Proposition}
\newtheorem{lemma}[theorem]{Lemma}
\newtheorem{conjecture}[theorem]{Conjecture}
\theoremstyle{remark}
\newtheorem{remark}{Remark}
\newtheorem{case}{Case}
\newtheorem{claim}{Claim}
\theoremstyle{definition}
\title{The spectral radius of graphs with no odd wheels}
\date{\today}
\author{Sebastian Cioab\u{a}\thanks{Department of Mathematical Sciences, University of Delaware, \texttt{cioaba@udel.edu}. Research partially supported by National Science Foundation grant CIF-1815922.} \and Dheer Noal Desai\thanks{Department of Mathematical Sciences, University of Delaware, \texttt{dheernsd@udel.edu}} \and Michael Tait\thanks{Department of Mathematics \& Statistics, Villanova University, \texttt{michael.tait@villanova.edu}. Research partially supported by National Science Foundation grant DMS-2011553.}}
\begin{document}

\maketitle

\begin{abstract}
    The odd wheel $W_{2k+1}$ is the graph formed by joining a vertex to a cycle of length $2k$. In this paper, we investigate the largest value of the spectral radius of the adjacency matrix of an $n$-vertex graph that does not contain $W_{2k+1}$. We determine the structure of the spectral extremal graphs for all $k\geq 2, k\not\in \{4,5\}$. When $k=2$, we show that these spectral extremal graphs are among the Tur\'{a}n-extremal graphs on $n$ vertices that do not contain $W_{2k+1}$ and have the maximum number of edges, but when $k\geq 9$, we show that the family of spectral extremal graphs and the family of Tur\'{a}n-extremal graphs are disjoint.
\end{abstract}
\section{Introduction}

Given a graph $F$, the {\em Tur\'an number of $F$} is denoted by $\mathrm{ex}(n, F)$ and is the maximum number of edges in an $n$-vertex graph which does not contain $F$ as a subgraph. Many questions in extremal combinatorics can be rephrased as asking for a certain Tur\'an number, and hence the study of the function $\mathrm{ex}(n, F)$ for various $F$ (or, more generally, for various families of forbidden graphs) is one of the most important topics in graph theory and combinatorics. The area has been studied extensively since its introduction to the present (see surveys \cite{degeneratesurvey, keevash, sidorenko}). The family of $F$-free graphs with $\mathrm{ex}(n, F)$ edges is denoted by $\mathrm{EX}(n, F)$. 

Given a graph $G$, let $\lambda_1(G)$ be the largest eigenvalue of its adjacency matrix (also called the {\em spectral radius of $G$}). We study a spectral version of the Tur\'an problem: given a graph $F$, let $\mathrm{spex}(n, F)$ denote the maximum value of $\lambda_1(G)$ over all $n$-vertex graphs $G$ which do not contain $F$ as a subgraph. We denote the family of $F$-free graphs with spectral radius equal to $\mathrm{spex}(n, F)$ by $\mathrm{SPEX}(n, F)$. The study of $\mathrm{spex}(n, F)$ for various graphs $F$ (or various families of forbidden subgraphs) 
was first proposed in generality by Nikiforov \cite{Nikiforovpaths}, though several sporadic results appeared earlier. In particular, the maximum spectral radius of graphs with no $K_{r+1}$ was determined in \cite{NK} and the maximum spectral radius of graphs with no $K_{s,t}$ was upper bounded in \cite{BG} and \cite{NZ}. 

One motivation for studying such problems is that $\lambda_1(G)$ is an upper bound for the average degree of $G$, and hence any upper bound on $\mathrm{spex}(n, F)$ also gives an upper bound on $\mathrm{ex}(n, F)$. Indeed the results in \cite{NK, NZ} imply Tur\'an's theorem as well as  F\"uredi's improvement to the K\H{o}v\'ari-S\'os-Tur\'an theorem \cite{F2}. The function $\mathrm{spex}(n, F)$ has been studied for many families of graphs  (see, for example, \cite{FG, LLT, NO, SSbook, W, YWZ, ZW, ZWF}). The study of $\mathrm{spex}(n, F)$ fits into a a broader framework of {\em Brualdi-Solheid problems} \cite{BS} investigating the maximum spectral radius over all graphs belonging to a specified family. Many results are known in this area (for example \cite{BZ, BLL, EZ, FN, S, SAH}). 

Let $W_t$ be the {\em wheel graph} on $t$ vertices: the graph formed by joining a vertex to all of the vertices in a cycle on $t-1$ vertices. In \cite{ZHL}, the authors determine the maximum spectral radius over $n$-vertex graphs which forbid all wheels. They state that ``it seems difficult to determine the maximum spectral radius of a $\{W_l\}$-free graph of order $n$". In this paper, we study $\mathrm{spex}(n, W_{2k+1})$ and $\mathrm{SPEX}(n,W_{2k+1})$ and answer their question except when $k \in \{4,5\}$ as described below.
The Tur\'an problem for odd wheels was recently resolved in \cite{DT} and \cite{Y} and we record these results here to compare them with ours. 

\begin{theorem}[Dzido and Jastrz\c ebski \cite{DT}]\label{thmDJ}
Let $W_5$ be the wheel on $5$ vertices. Then \[\textup{ex}(n, W_5) = \begin{cases}\frac{n^2}{4} + \frac{n}{2} - 1 & n\equiv{2}\pmod{4} \\ \left\lfloor \frac{n^2}{4} \right\rfloor + \left\lfloor\frac{n}{2} \right\rfloor & \mathrm{otherwise}.

\end{cases}\]

\end{theorem}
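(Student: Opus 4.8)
The plan is to establish the lower bound by an explicit construction and the upper bound by induction on $n$, reducing to a dense case that is then handled by a stability argument together with a local analysis tailored to $W_5$. Throughout I would use the identifications $W_5 = K_1 + C_4 = K_{1,2,2}$ and the elementary observation that $G$ contains $W_5$ if and only if some vertex $v$ has a $4$-cycle inside $G[N(v)]$; equivalently, $G$ is $W_5$-free precisely when $G[N(v)]$ is $C_4$-free for every $v$.

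For the lower bound, I would take $G_0$ to be the complete bipartite graph with parts $A$ and $B$ of sizes $\lceil n/2 \rceil$ and $\lfloor n/2 \rfloor$, together with a maximum matching placed inside $A$ and a maximum matching inside $B$. To check that $G_0$ is $W_5$-free, note first that a $4$-cycle of $G_0$ cannot have three or four of its vertices on one side: that side's internal graph is a matching, so it has no vertex of degree $2$, whereas three consecutive cycle-vertices on one side would force such a vertex. Hence every $4$-cycle of $G_0$ has exactly two vertices in $A$ and two in $B$, and then any common neighbor $h$ of that $4$-cycle (which would be the hub of a $W_5$) must have two neighbors on its own side, again forcing a degree-$2$ vertex in one of the matchings — impossible. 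So no neighborhood of $G_0$ contains a $4$-cycle. Counting edges and maximizing $|A|\,|B| + \lfloor |A|/2 \rfloor + \lfloor |B|/2 \rfloor$ over $|A| + |B| = n$ then yields $\lfloor n^2/4 \rfloor + \lfloor n/2 \rfloor$ when $n \not\equiv 2 \pmod 4$; when $n \equiv 2 \pmod 4$ both optimal shapes (parts equal, or differing by $2$) leave one vertex unmatched, costing exactly one edge and producing the value $n^2/4 + n/2 - 1$.

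For the upper bound I would induct on $n$, checking small cases by hand. Let $G$ be an $n$-vertex $W_5$-free graph with the maximum number of edges, and let $g(n) := \mathrm{ex}(n, W_5) - \mathrm{ex}(n-1, W_5)$, which the displayed formula computes to be $\tfrac n2 + O(1)$. If $\delta(G) \le g(n)$, delete a vertex of minimum degree and apply the inductive hypothesis to finish. Otherwise $\delta(G) > g(n)$, so $e(G) \ge \tfrac12 n\,\delta(G) > \tfrac12 n\, g(n) = \tfrac{n^2}{4}(1 - o(1)) \ge (\tfrac14 - \varepsilon)n^2$ for large $n$; since $\chi(W_5) = 3$, the Erd\H{o}s--Simonovits stability theorem provides a partition $V(G) = A \cup B$ with $|A|, |B| = \tfrac n2 + o(n)$ and all but $o(n^2)$ of the cross pairs realized as edges. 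After moving each vertex to the side where it has more neighbors, let $A'$ (resp.\ $B'$) be the vertices of $A$ (resp.\ $B$) missing at most $\varepsilon' n$ neighbors on the opposite side; the cross non-edge count forces $|A \setminus A'|, |B \setminus B'| = o(n)$. The crucial local claim is that $G[A']$ and $G[B']$ are matchings: if $v \in A'$ had two neighbors $v_1, v_2 \in A'$, then $v, v_1, v_2$ would share at least $|B| - 3\varepsilon' n \ge 2$ common neighbors $b_1, b_2 \in B$, and $\{v, v_1, v_2, b_1, b_2\}$ would carry a $W_5$ with hub $v$ and rim $v_1 b_1 v_2 b_2$, a contradiction. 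Absorbing the $o(n)$ ``junk'' vertices (whose large internal degree forced by $\delta(G) > g(n)$ is what makes them controllable) and the $o(n^2)$ missing cross-edges into the count, one would then conclude $e(G) \le |A|\,|B| + \lfloor |A|/2 \rfloor + \lfloor |B|/2 \rfloor \le \mathrm{ex}(n, W_5)$ by the optimization already carried out, with the $n \equiv 2 \pmod 4$ loss reappearing for the same parity reason.

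The hard part will be the final squeezing step of the upper bound: the stability input only yields $e(G) \le \mathrm{ex}(n, W_5) + o(n^2)$, and turning this into the exact bound requires controlling the $o(n)$ vertices misclassified by the partition (which can a priori carry $o(n^2)$ internal edges) and the $o(n^2)$ absent cross-edges, showing that any such deficiency costs strictly more than it could ever compensate — and then doing the parity bookkeeping carefully enough to isolate the $n \equiv 2 \pmod 4$ exception. An alternative, stability-free route would be to count $4$-cycles together with their common neighbors — $W_5$-freeness says that for every $v$ the graph $G[N(v)]$ has no $4$-cycle, so every $4$-cycle of $G$ has empty common neighborhood — but converting this into an exact edge bound appears at least as delicate.
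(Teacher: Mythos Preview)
The paper does not prove this statement at all: Theorem~\ref{thmDJ} is quoted from Dzido and Jastrz\c{e}bski \cite{DT} as background, with no proof or sketch given. So there is nothing in the paper to compare your proposal against.

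As for the proposal on its own terms: the lower bound construction and its verification are correct and complete. The upper bound sketch, however, is not a proof but a plan, and you have honestly flagged the gap yourself. Stability plus the local matching claim on $A',B'$ gets you to $e(G)\le \mathrm{ex}(n,W_5)+o(n^2)$, and the passage from there to the exact constant is where all the work lies. Your treatment of the exceptional vertices in $A\setminus A'$ and $B\setminus B'$ is hand-waved: the sentence ``whose large internal degree forced by $\delta(G)>g(n)$ is what makes them controllable'' is backwards --- a vertex lands in $A\setminus A'$ because it has \emph{few} cross-neighbors, and $\delta(G)>g(n)$ only forces its \emph{total} degree up, which could a priori be realized by many internal neighbors, exactly the situation you need to rule out. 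You would need an argument (akin to Lemmas~\ref{lemma6}--\ref{lemma5.4} of this paper, which do the analogous thing for the spectral extremum) showing that any such vertex together with its many same-side neighbors already forces a $W_5$ via common neighbors on the other side, and hence that $A\setminus A'$ and $B\setminus B'$ are in fact empty. Without that, the $o(n)$ junk vertices could still carry $\Theta(n)$ internal edges, which is enough to swamp the $O(1)$ precision you are aiming for. The induction-on-$\delta(G)$ wrapper is fine, and the parity bookkeeping for $n\equiv 2\pmod 4$ is routine once the structural part is nailed down; but as written the upper bound is a plausible outline, not a proof.
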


The extremal graphs in $\mathrm{EX}(n, W_5)$ consist of a complete bipartite graph with an additional maximum matching in each part. When $n\not\equiv 2 \pmod{4}$, the bipartite graph is as balanced as possible. When $n\equiv 2 \pmod{4}$ there are two extremal graphs: choosing the bipartite graph to have parts of size $n/2-1$ and $n/2+1$ gives the same number of edges as choosing them to each be of size $n/2$. Our first theorem shows that the spectral extremal graphs are a subset of the Tur\'an-extremal graphs.

\begin{theorem}
\label{thm2}For sufficiently large $n$, $\mathrm{SPEX}(n, W_5) \subset \mathrm{EX}(n, W_5). $
\end{theorem}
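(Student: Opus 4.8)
The plan is to follow the standard template for spectral Tur\'an-type problems: show that a spectral-extremal graph is nearly regular and close to the conjectured extremal configuration, and then bootstrap this approximate structure to an exact one using $W_5$-freeness. The key local fact is that, since $W_5$ is the join $K_1 \vee C_4$, a graph $G$ is $W_5$-free if and only if $G[N(v)]$ contains no $C_4$ for every vertex $v$; in particular $e(G[N(v)]) \le \frac{1}{2}\big(1+\sqrt{d_v-1}\big)d_v = O(d_v^{3/2})$ for all $v$, by the classical bound on the number of edges of a $C_4$-free graph.

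Fix $G \in \mathrm{SPEX}(n, W_5)$ with spectral radius $\lambda$ and Perron eigenvector $\mathbf{x}$, normalized so that $x_z = \max_v x_v = 1$; here $G$ is connected, since otherwise adding an edge between two components keeps it $W_5$-free and strictly increases $\lambda$. Since some $W_5$-free graph --- a balanced complete bipartite graph with a near-perfect matching added inside each part --- has average degree exceeding $n/2$, we have $\lambda > n/2$. Now expand, using $x_z = 1$ and $x_w \le 1$,
\[
\lambda^2 = \lambda^2 x_z = d_z + \sum_{w \ne z} x_w\, |N(z) \cap N(w)| \le d_z + 2e(G[N(z)]) + e\big(N(z),\, V \setminus N[z]\big),
\]
where $N[z] = N(z) \cup \{z\}$. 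Combining this with $2e(G[N(z)]) = O(n^{3/2})$ and $e(N(z), V \setminus N[z]) \le d_z(n-1-d_z)$ yields $d_z(n-1-d_z) \ge n^2/4 - O(n^{3/2})$, which forces $d_z = n/2 + O(n^{3/4})$ and, more usefully, that all but $O(n^{3/2})$ of the potential edges between $N(z)$ and $V \setminus N[z]$ are present. This is the stability step: it produces a partition $V = A \cup B$ with $|A|, |B| = n/2 + o(n)$, $e(A,B) = n^2/4 - o(n^2)$, and $e(G[A]) + e(G[B]) = o(n^2)$.

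The bulk of the work is upgrading this approximate picture to the exact structure, which I would carry out in four interlocking stages. First, I would clean up the bipartition, placing each vertex on the side where it has more neighbors, and show that every vertex of $A$ is adjacent to all but $O(1)$ vertices of $B$ and symmetrically: a vertex missing many cross-edges has Perron entry much smaller than the core vertices, and its presence can be shown to contradict the extremality of $\lambda$ via a local modification. Second, I would use $W_5$-freeness to force $G[A]$ and $G[B]$ to have maximum degree at most $1$ --- if $a \in A$ had two neighbors $a_1, a_2 \in A$, then since each of $a, a_1, a_2$ misses only $O(1)$ vertices of $B$ there are distinct $b_1, b_2 \in N(a) \cap N(a_1) \cap N(a_2)$, and then $a_1 b_1 a_2 b_2$ is a $C_4$ inside $N(a)$, a contradiction. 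Third, I would bootstrap: once $G[A]$ and $G[B]$ are matchings, adding any missing cross-edge $ab$ cannot create a $W_5$, since any new wheel would require a $C_4$ through $ab$ inside some $N(v)$, and tracing that cycle forces a vertex of $A$ or of $B$ to have two neighbors within its own part, excluded by the previous stage; by Perron--Frobenius this strictly increases $\lambda$, so all edges between $A$ and $B$ must already be present. Fourth, the same principle shows the two matchings are maximum, since otherwise one could add an edge between two unsaturated vertices of the same part without creating $W_5$. At that point $G$ equals $K_{|A|,|B|}$ together with a maximum matching in each part; a short eigenvalue comparison over the splits $|A|+|B| = n$ shows the split is as balanced as the description of $\mathrm{EX}(n, W_5)$ underlying Theorem~\ref{thmDJ} demands, and an edge count then places $G$ in $\mathrm{EX}(n, W_5)$.

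I expect the third stage --- and more broadly the passage from $o(n^2)$-stability to the exact complete-bipartite-plus-matching structure --- to be the main obstacle. The difficulty is controlling the possibly $o(n)$ "exceptional" vertices whose degree and Perron weight are not yet pinned down; one must eventually show there are none, which requires two-sided eigenvector estimates (lower bounds on entries of vertices in the bipartite core, upper bounds on the total Perron weight carried by exceptional vertices) together with repeated use of the extremality of $\lambda$ under local edge modifications, all while maintaining the $W_5$-free constraint. The parity bookkeeping at the very end --- in particular the case $n \equiv 2 \pmod 4$, where the balanced graph and the off-by-one graph have equally many edges --- is routine once the structure is in hand.
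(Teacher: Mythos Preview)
Your proposal is correct and lands on the same endgame as the paper --- once $G[A]$ and $G[B]$ are matchings, add any missing cross-edge or matching edge, check no $W_5$ appears, and finish with an eigenvalue comparison to pin down $|A|,|B|$ --- but the route to the approximate bipartite structure is genuinely different. The paper obtains its large max-cut via the Triangle Removal Lemma together with F\"uredi's stability theorem (its Lemmas~\ref{triangle removal lemma}, \ref{furedi stability lemma}, \ref{lemma first maxcut}), as part of a framework built to handle all $k\ge 2$ at once; you instead expand $\lambda^2 x_z$ directly and use the K\H{o}v\'ari--S\'os--Tur\'an bound on $e(G[N(z)])$ to force $d_z=n/2+O(n^{3/4})$ and near-complete bipartiteness between $N(z)$ and its complement. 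Your route is more elementary, avoids the tower-type ``sufficiently large $n$'' coming from Triangle Removal, and is tailored to $W_5$; the paper's route buys machinery it then reuses verbatim for $k\ge 3$. After the stability step, your stages 2--4 are essentially the paper's Lemma~\ref{lemma5.4} specialized to $k=2$ followed by the short Section~\ref{section k2} argument.

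One point to watch: your stage~2 (``each of $a,a_1,a_2$ misses only $O(1)$ vertices of $B$'') presupposes that stage~1 has already eliminated \emph{all} exceptional vertices, not merely bounded their number. In the paper these two conclusions are intertwined (Lemma~\ref{lemma5.4}): one first shows $G[S\setminus P]$ is $K_{1,k}$-free using only that the exceptional set $P$ is small, and then uses that partial structure inside the local-modification argument to conclude $P=\emptyset$. You correctly flag this as the main obstacle, but the clean linear ordering of your four stages will likely need to be relaxed in exactly this way.
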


When $k\geq 3$, the structure of the extremal graphs changes. 

\begin{theorem}[Yuan \cite{Y}]
\label{thm3}
Let $k\geq 3$ be an integer. For $n$ sufficiently large, \[\textup{ex}(n, W_{2k+1}) = \max\bigg\{n_0 n_1 + \bigg\lfloor \frac{(k-1)n_0}{2}\bigg\rfloor + 1 : n_0 + n_1 = n\bigg\}. \]

\end{theorem}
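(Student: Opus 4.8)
The plan is to prove matching lower and upper bounds: the former by an explicit construction, the latter by stability followed by a local analysis of neighbourhoods, viewing each vertex as a potential hub of a forbidden wheel.

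For the lower bound I would exhibit, for each $n_0+n_1=n$, the graph $G_{n_0}$ obtained from the complete bipartite graph with parts $A$, $B$ of sizes $n_0$ and $n_1$ by adding inside $A$ a graph $H$ of maximum degree at most $k-1$ with no path on $2k-1$ vertices and with $\lfloor (k-1)n_0/2\rfloor$ edges (a disjoint union of copies of $K_k$, with a mild adjustment on the at most $k-1$ leftover vertices so that the edge count is attained), and then adding exactly one edge inside $B$. To check that $G_{n_0}$ contains no $W_{2k+1}$ it suffices to verify that $G_{n_0}[N(v)]$ is $C_{2k}$-free for every vertex $v$: if $v\in B$ is not on the extra edge then $N(v)=A$ and $H$ has no $2k$-cycle; if $v\in B$ lies on the extra edge then $N(v)=A\cup\{v'\}$ with $v'$ joined to all of $A$, and a $2k$-cycle through $v'$ would force a path on $2k-1$ vertices in $H$; and if $v\in A$ then $N(v)$ is a clique on at most $k-1$ vertices completely joined to $B$ together with the single edge inside $B$, so a short count of how many vertices a cycle can absorb per ``step'' shows that every cycle in $G_{n_0}[N(v)]$ has length at most $2k-1$. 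Taking the maximum over $n_0$ gives the lower bound, and the same count shows that a second edge inside $B$ would create a $2k$-cycle, so the $+1$ is optimal within this family.

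For the upper bound, let $G$ be an $n$-vertex $W_{2k+1}$-free graph with $\mathrm{ex}(n,W_{2k+1})$ edges. Since $\chi(W_{2k+1})=3$, the Erd\H{o}s--Stone--Simonovits theorem gives $e(G)=n^2/4+o(n^2)$, and the Erd\H{o}s--Simonovits stability theorem produces a partition $V(G)=A\sqcup B$ with $e(G[A])+e(G[B])=o(n^2)$ and $\bigl|\,|A|-|B|\,\bigr|=o(n)$. A standard clean-up (relocating the $o(n)$ vertices that have at least $\varepsilon n$ neighbours on their own side, and a deletion argument -- equivalently progressive induction -- for the minimum degree) lets me assume in addition that $|A|,|B|=n/2\pm o(n)$ and that every vertex is adjacent to all but at most $\varepsilon n$ vertices of the opposite part. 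The heart of the argument is then a local analysis. First, if some $v$ had at least $k$ neighbours inside its own part, then, since $v$ also has $\Omega(n)$ neighbours across and cross-pairs are overwhelmingly edges, one can greedily build a $2k$-cycle that alternates between $k$ of the same-side neighbours of $v$ and distinct common opposite-side neighbours, producing $W_{2k+1}$ with hub $v$; hence every vertex has at most $k-1$ neighbours inside its part, so $e(G[A])\le\lfloor(k-1)|A|/2\rfloor$ and $e(G[B])\le\lfloor(k-1)|B|/2\rfloor$, and moreover neither $G[A]$ nor $G[B]$ can contain a $C_{2k}$ (it would be dominated by a vertex of the other part). Second -- and this is the crucial structural fact -- at most one of the two parts can contain two or more edges: if, say, $G[B]$ contains two independent edges or a path on three vertices while $G[A]$ has a vertex of internal degree $k-1$ adjacent to the relevant vertices of $B$, then a ``double-step'' construction (the edges inside $B$ let a cycle in that vertex's neighbourhood take two consecutive steps within $B$) again yields a $2k$-cycle; the remaining configurations must be ruled out by a direct case analysis.

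Granting that one part, say $B$, has $e(G[B])\le 1$, we are done: $e(G)=e(A,B)+e(G[A])+e(G[B])\le |A||B|+\lfloor(k-1)|A|/2\rfloor+1$, which is at most $\max\{n_0n_1+\lfloor(k-1)n_0/2\rfloor+1:n_0+n_1=n\}$ on taking $n_0=|A|$; combined with the lower bound this gives the theorem. The step I expect to be the main obstacle is exactly the claim that internal edges cannot be spread across both parts. The clean subcase, in which one part has a vertex of internal degree exactly $k-1$, is dispatched by the double-step cycle above; the genuinely delicate situation is when both $G[A]$ and $G[B]$ contain at least two edges but have small maximum internal degree, so that no hub has enough private neighbours on its own side. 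There one must either locate a $2k$-cycle that simultaneously uses internal edges of both parts, or, using that such a partition is forced to be unbalanced, argue from the global edge count that this regime cannot attain $\mathrm{ex}(n,W_{2k+1})$.
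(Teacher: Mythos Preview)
This theorem is not proved in the present paper: it is quoted from Yuan~\cite{Y} and used as a black box (the paper only records the formula and the description of the extremal family after the statement). There is therefore no ``paper's own proof'' here to compare your proposal against; any comparison would have to be with Yuan's original argument.

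On your outline itself, two comments. First, the lower-bound construction needs more care than ``disjoint copies of $K_k$ with a mild adjustment on the leftover vertices''. For the maximising $n_0$ (which is $\lfloor (2n+k-1)/4\rfloor$ or its ceiling, hence typically not a multiple of $k$) you need a $(k-1)$-regular or nearly $(k-1)$-regular $P_{2k-1}$-free graph on $n_0$ vertices, and $r<k$ leftover vertices cannot by themselves supply the missing $\lfloor (k-1)r/2\rfloor$ edges while keeping degrees at most $k-1$. The paper explicitly notes (citing Proposition~2.1 of \cite{Y}) that such graphs exist once $n_0\ge 2k$; you should invoke that rather than the ad hoc $K_k$ packing.

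Second, for the upper bound you have correctly located the crux: after stability and the $K_{1,k}$-freeness of each side, one must force all but one internal edge into a single part. You dispatch the case where some vertex has internal degree $k-1$, but the residual case you flag --- both $G[A]$ and $G[B]$ carry at least two edges yet neither has a vertex of internal degree $k-1$ --- is genuinely where the work lies, and ``either find a $2k$-cycle using internal edges of both parts, or argue from the global edge count'' is not yet a proof. In Yuan's argument this is handled, and until you give the actual case analysis (or a counting argument strong enough to exclude the balanced low-internal-degree regime) the upper bound is incomplete.
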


A graph is called {\em nearly $(k-1)$-regular} if every vertex but one has degree $k-1$ and the final vertex has degree $k-2$. Let $\mathcal{U}_{k,n}$ be the family of $(k-1)$-regular or nearly $(k-1)$-regular graphs on $n$ vertices which do not contain a path on $2k-1$ vertices. This family is non-empty when $n\geq 2k$ (see Proposition 2.1 in \cite{Y}). In \cite{Y} it is shown that for $k\geq 3$, the family $\mathrm{EX}(n, W_{2k+1})$ consists of complete bipartite graphs with parts of size $\frac{n}{2} - r$ and $\frac{n}{2}+r$ along with a graph from $\mathcal{U}_{k, n/2+r}$ embedded in the larger part and a single edge embedded in the smaller part, where $\frac{n}{2}+r \in \Bigg\{\bigg\lfloor\frac{2n + k - 1}{4}\bigg\rfloor, \bigg\lceil\frac{2n + k - 1}{4}\bigg\rceil\Bigg\}$. Our second theorem determines the structure of the graphs in $\mathrm{SPEX}(n,W_{2k+1})$ for $k\geq 3, k\not\in \{4,5\}$ and $n$ sufficiently large.

\begin{theorem}
\label{thm4}
Let $k\geq 3$, $k\not\in \{4,5\}$. For sufficiently large $n$, if $G \in \textup{SPEX}(n, W_{2k+1})$, then $G$ is the union of a complete bipartite graph with parts $L$ and $R$ of size $\frac{n}{2}+s$ and $\frac{n}{2}-s$, respectively,
and a graph from $\mathcal{U}_{k, n/2+s}$ embedded in $G[L]$ and exactly one edge in $G[R]$. Furthermore, $|s| \leq 1$.
\end{theorem}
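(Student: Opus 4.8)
The plan is to combine a lower bound on $\mathrm{spex}(n,W_{2k+1})$ with an analysis of the Perron eigenvector of an extremal graph, first producing an approximate structure and then bootstrapping it to the exact one.

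First I would fix the lower bound. Evaluating the spectral radius of the construction in the statement — the complete bipartite graph $K_{\lceil n/2\rceil,\lfloor n/2\rfloor}$ with a $(k-1)$-regular (or nearly $(k-1)$-regular) $P_{2k-1}$-free graph inside the larger part and a single edge inside the smaller part — the partition into the two parts is equitable except at one vertex, and a routine estimate gives $\mathrm{spex}(n,W_{2k+1})\ge\frac12\big(k-1+\sqrt{(k-1)^2+4\lfloor n^2/4\rfloor}\,\big)-o(1)=\frac n2+\frac{k-1}2-o(1)$. Now let $G\in\mathrm{SPEX}(n,W_{2k+1})$, put $\lambda=\lambda_1(G)$, and let $\mathbf x$ be the Perron vector normalised so that $x_z=\max_v x_v=1$. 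Since adding an edge strictly increases the spectral radius, $G$ is edge-maximal $W_{2k+1}$-free, and in particular connected. The one consequence of $W_{2k+1}$-freeness I would use at the outset is local: for every vertex $v$, $G[N(v)]$ is $C_{2k}$-free, so $e(G[N(v)])=O(n^{1+1/k})=o(n^2)$ by the Bondy--Simonovits theorem.

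The approximate structure comes from expanding the eigenvalue equation twice:
\[
\lambda^2=\lambda^2x_z=\sum_{u\in N(z)}\sum_{w\in N(u)}x_w=d(z)+\sum_{w\neq z}\mathrm{codeg}(z,w)\,x_w.
\]
For $w\in N(z)$ the codegree $\mathrm{codeg}(z,w)$ equals the degree of $w$ in $G[N(z)]$, so those terms contribute at most $2e(G[N(z)])=o(n^2)$; for $w\notin N[z]$ we have $\mathrm{codeg}(z,w)\le d(z)$ and $x_w\le1$. Combined with $e(G)\le\mathrm{ex}(n,W_{2k+1})=\frac{n^2}4+O(n)$ from Theorem~\ref{thm3}, this yields $d(z)(n-d(z))\ge\frac{n^2}4-o(n^2)$, hence $d(z)=\frac n2+o(n)$, and then $\sum_{w\notin N[z]}\mathrm{codeg}(z,w)\ge\frac{n^2}4-o(n^2)$, forcing all but $o(n)$ vertices of $V\setminus N[z]$ to be joined to all but $o(n)$ vertices of $N(z)$ and to have $x$-value $1-o(1)$. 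Writing $B=N(z)$ and $A=V\setminus N(z)$, we obtain $|A|,|B|=\frac n2+o(n)$, $e(A,B)\ge\frac{n^2}4-o(n^2)$, hence $e(G[A])+e(G[B])=o(n^2)$, and — re-running the eigenvalue equation at vertices of $B$ adjacent to almost all of $A$ — all but $o(n)$ vertices of $G$ are ``heavy'', with $x$-value $1-o(1)$.

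Finally I would bootstrap. Using that heavy vertices of $A$ are joined to almost all of $B$, one reroutes a $C_{2k}$ through $k$ heavy $A$-neighbours of a vertex $v\in A$ and their common $B$-neighbours to get $W_{2k+1}$ with hub $v$; an analogous argument handles two $B$-edges at a heavy vertex, and the $o(n)$ exceptional vertices are cleared by a relocation argument (a vertex of small $x$-value made adjacent to $\frac n2$ heavy vertices raises $\lambda$ while preserving $W_{2k+1}$-freeness — impossible). This gives $\Delta(G[A])\le k-1$ and $\Delta(G[B])\le1$, so $e(G[A]),e(G[B])=O(n)$; feeding this back into the $\lambda^2x_z$ computation sharpens everything to $|A|,|B|=\frac n2+O(1)$, every vertex adjacent to all but $O(1)$ of the opposite side, and $x_v\ge1-O(1/n)$ for all $v$. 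From here a short chain of claims, each using edge-maximality together with the structure and the $C_{2k}$-freeness of neighbourhoods — adding a missing bipartite edge, a missing $A$-edge, or a second $B$-edge would complete a $C_{2k}$ inside some neighbourhood — shows that the bipartite graph between $A$ and $B$ is complete, that $G[A]$ is $P_{2k-1}$-free with maximum degree $\le k-1$, and that $G[B]$ is a single edge. At that point $G$ is determined up to the choice of $G[A]$ and the part sizes, and maximising $\lambda_1$ forces $G[A]\in\mathcal U_{k,|A|}$ and $|A|=\frac n2+s$ with $|s|\le1$.

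I expect the main obstacle to be the bootstrapping step — clearing the $o(n)$ exceptional vertices and then squeezing the $o(n)$ and $O(1)$ error terms down to the exact values — which demands tight control of the Perron vector, together with, at the very end, the optimisation of $\lambda_1$ over the part-size parameter $s$ and over the edge distribution between $A$ and $B$. It is exactly this last optimisation that needs $k\notin\{4,5\}$: for those two values the competing near-extremal configurations (for instance, a sparser graph inside $A$ compensated by a full matching inside $B$) are too close to be separated by this analysis, whereas for $k\ge6$ the stated construction wins by $\Theta(n)$ edges and for $k=3$ it still wins by a (delicate) $O(1)$ margin.
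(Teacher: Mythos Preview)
Your approach to the approximate structure via the second-moment expansion $\lambda^2 x_z=\sum_{u\sim z}\sum_{w\sim u}x_w$ is a legitimate alternative to the paper's route through the Triangle Removal Lemma and F\"uredi stability, and indeed a cleaner one: it avoids the Regularity Lemma entirely and gets directly to $|A|,|B|=n/2+o(n)$ and $x_v=1-o(1)$ for almost all $v$. Up through ``$\Delta(G[A])\le k-1$'' your sketch matches the paper's Lemma~3.4 in spirit.

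The gap is in the next sentence. You assert that ``an analogous argument handles two $B$-edges at a heavy vertex'' and conclude $\Delta(G[B])\le 1$, but the symmetric argument only yields $\Delta(G[B])\le k-1$; the fact that $B=N(z)$ is $C_{2k}$-free adds nothing here, since $(k-1)$-regular $C_{2k}$-free graphs exist. Passing from $\Delta(G[B])\le k-1$ to ``$G[B]$ has at most one edge'' is the heart of the proof and is exactly where the hypothesis $k\notin\{4,5\}$ is used --- not in the final part-size optimisation, as you suggest. In the paper this is done by a recursive bootstrap (their Lemma~5.3, called Lemma~\ref{lem9}): a quotient-matrix comparison (Lemma~\ref{general interlacing use}) forces some vertex of internal degree $\ge\lceil k/2\rceil$ in one side $L$; the common $R$-neighbourhood of this vertex and its $L$-neighbours is then $C_{2k}$-free with additional path constraints, which bounds $e(G[R])$; feeding this back into the quotient-matrix bound forces internal degree $\ge k-2$ in $L$; one more round (which needs $k\ge 6$, since it uses that $k-2$ disjoint paths in the common neighbourhood contain at most $k+1$ vertices, hence at most three disjoint edges) gives internal degree $k-1$. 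Only with a vertex of internal degree $k-1$ in $L$ can one conclude (Lemma~\ref{lem10}) that two edges in $R$ would close a $C_{2k}$ in its neighbourhood.

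Your diagnosis of the $k\in\{4,5\}$ obstruction is in the right direction --- the competing configuration is indeed ``sparser $L$ compensated by more edges in $R$'' --- but your quantitative claim is wrong: for no $k$ does the stated construction win by $\Theta(n)$ edges over that competitor (both have $n^2/4+\Theta(n)$ edges). The separation is in spectral radius, at scale $\Theta(1)$, and it is effected by the structural/quotient-matrix argument above, not by an edge count. The final optimisation over $s$ in the paper (Lemma~\ref{almost equipartite} and Section~\ref{section partition sizes}) works for all $k\ge 3$ and is not where the restriction enters.
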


As in Theorems \ref{thmDJ} and \ref{thm2}, the exact spectral extremal graph depends on the parity $n$ mod $4$. Furthermore, in the case where $|L|(k-1)$ is odd, it may depend on which particular graph in $\mathcal{U}_{k,n/2+s}$ is embedded in $L$, making it complicated to determine $\mathrm{SPEX}(n, W_{2k+1})$ precisely. Note though that Theorem \ref{thm4} implies that $\mathrm{SPEX}(n,W_{2k+1})\cap \mathrm{EX}(n, W_{2k+1})=\emptyset$ when $k=7$ or $k\geq 9$ and $n$ is sufficiently large. In Section \ref{section partition sizes} we give more information about the value of $s$ for the spectral extremal graph, and in most cases this allows one to determine $\mathrm{spex}(n, W_{2k+1})$ precisely. In particular, we can determine the exact value of $s$ if $n\not\equiv 2 \pmod{4}$ or if $k$ is odd, and in these cases we have that $\mathrm{spex}(n, W_{2k+1})$ is the root of a cubic polynomial. When $n\equiv 2 \pmod{4}$ and $k$ is even, we determine $\mathrm{spex}(n, W_{2k+1})$ up to an additive factor of $o(1/n)$.

Before beginning, we remark that both problems for even wheels are solved in a more general setting. Let $F$ be any graph of chromatic number $r+1\geq 3$ which contains an edge such that $\chi(F\setminus e) = r$. Simonovits \cite{simonovits} proved that for $n$ large enough, the only graph in $\mathrm{EX}(n, F)$ is the Tur\'an graph with $r$ parts. Nikiforov \cite{Nsat} proved a spectral version of this theorem. Since $\chi(W_{2k})=4$ and $\chi(W_{2k}\setminus e)=3$ for any edge $e$ of $W_{2k}$, these theorems apply. 

\subsection*{Organization and notation}
In Section \ref{section lemmas} we present several lemmas that we will use throughout the paper. In Section \ref{structural lemmas section}, we give structural results that graphs in $\mathrm{SPEX}(n, W_{2k+1})$ must satisfy for all $k$. We then specialize to the $k=2$ and $k>2$ cases. In Section \ref{section k2} we prove Theorem \ref{thm2} and in Section \ref{section k3} we prove Theorem \ref{thm4}. In Section \ref{section partition sizes}, we discuss the exact sizes of the partitions in the spectral extremal $W_{2k+1}$-free graphs. We end with some concluding remarks and open problems. 

For $G$ an $n$-vertex graph, the \textit{adjacency matrix} of $G$ is the $n$-dimensional $0,1$-matrix, $A(G) = (a_{ij})_{n\times n}$, with $a_{i,j} = 1$, if $v_i$ is adjacent to $v_j$, and $a_{ij} = 0$ otherwise. Therefore, $A(G)$ is a symmetric matrix and has $n$ real eigenvalues, that may be denoted in descending order as follows: $\lambda_1 \geq \lambda_2 \geq \ldots \geq \lambda_n$. We will also call $\lambda_1(G)$ the \textit{spectral radius} of $G$, and we will denote it by $\lambda_1$ if the associated graph is unambiguous by context. We will use $P_t, C_t, K_t$ and $W_t$ to denote the path, the cycle, complete graph, and the wheel on $t$ vertices, respectively.

\section{Background Lemmas}\label{section lemmas}
In this section, we record several lemmas that we will use. We start with the Triangle Removal Lemma and a stability theorem of F\"uredi.

\begin{lemma}[Triangle Removal Lemma \cite{ET}, \cite{FT}, \cite{RISE}]\label{triangle removal lemma}
For each $\epsilon > 0$, there exists a $\delta > 0$ and $N = N(\epsilon)$, such that every graph $G$ on $n$ vertices with $n \geq N$, and at most $\delta n^3$ triangles, can be made triangle-free by removing at most $\epsilon n^2$ edges.
\end{lemma}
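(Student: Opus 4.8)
The plan is to deduce the statement from Szemer\'edi's Regularity Lemma together with a triangle-counting lemma for regular triples. First I would fix $\epsilon > 0$ and apply the Regularity Lemma with a regularity parameter $\eta = \eta(\epsilon)$ and a lower bound on the number of parts to be chosen later; this produces an equipartition $V_1, \ldots, V_M$ of $V(G)$ into $M$ parts, with $M$ bounded by a function of $\eta$ alone, such that all but at most $\eta M^2$ of the pairs $(V_i, V_j)$ are $\eta$-regular. The key point is the order of quantifiers: $\eta$ and the lower bound on the number of parts are chosen in terms of $\epsilon$ first, and this fixes the upper bound $M = M(\eta)$.

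Next I would perform the standard ``cleaning'' step: delete every edge lying inside a part, every edge lying in an irregular pair, and every edge of a pair $(V_i, V_j)$ whose density is less than $2\eta$. The number of deleted edges is at most $M\binom{n/M}{2} + \eta M^2 (n/M)^2 + \binom{M}{2}\cdot 2\eta (n/M)^2 \le (1/M + 3\eta)n^2$, which is at most $\epsilon n^2$ provided $M$ is large and $\eta$ is small relative to $\epsilon$. Call the resulting subgraph $G'$.

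Then I would show that $G'$ is triangle-free unless $G$ has many triangles. Suppose $G'$ contains a triangle; its three vertices lie in parts $V_i, V_j, V_k$ which, after cleaning, are pairwise distinct, and each of the three pairs among them is $\eta$-regular with density at least $2\eta$. A counting lemma for regular triples then guarantees that the number of triangles of $G$ with one vertex in each of $V_i, V_j, V_k$ is at least $(1 - c_0\eta)\eta^3 (n/M)^3 \ge c(\epsilon) n^3$ for some constant $c(\epsilon) > 0$ depending only on $\epsilon$ (through $\eta$ and $M$). Choosing $\delta = \delta(\epsilon)$ strictly below $c(\epsilon)$, and $N = N(\epsilon)$ large enough that all the inequalities above are valid, contradicts the hypothesis that $G$ has at most $\delta n^3$ triangles. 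Hence $G'$ is triangle-free and was obtained from $G$ by deleting at most $\epsilon n^2$ edges.

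The main obstacle is the quantitative bookkeeping across this chain of dependencies: $\delta$ must be taken below the triangle-density constant $c(\epsilon)$, which itself depends on the partition size $M$, which in turn grows tower-exponentially in $1/\eta$, so $\delta$ ends up astronomically small in terms of $1/\epsilon$; one must check that this chain is consistent. The one genuinely non-trivial auxiliary input is the counting lemma for regular triples (that three pairwise regular pairs of non-negligible density force the expected number of triangles up to lower-order error), and establishing that is where the real work lies.
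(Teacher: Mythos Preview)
Your proposal is the standard Regularity-Lemma proof of the Triangle Removal Lemma and is correct as outlined; the bookkeeping on the cleaning step and the use of the triangle counting lemma for regular triples are accurate. However, the paper does not prove this lemma at all: it is stated as a cited background result (attributed to \cite{ET}, \cite{FT}, \cite{RISE}) and used as a black box in Lemma~\ref{lemma first maxcut}. So there is nothing to compare against in the paper's own text; your write-up simply supplies the classical proof that the paper omits.
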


\begin{lemma}[F\"uredi Stability Theorem \cite{F}]\label{furedi stability lemma}
Suppose $G$ is a triangle-free graph on $n$ vertices and $s$ is a positive integer such that $e(G) = e(T_{n, 2}) - s$. Then there exists a bipartite subgraph $H$, such that $e(H) \geq e(G) - s$.
\end{lemma}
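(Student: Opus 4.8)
The plan is to exhibit the desired bipartite subgraph explicitly, taking the neighborhood of a maximum-degree vertex as one side of the bipartition. Concretely, let $v$ be a vertex of $G$ of maximum degree $\Delta := \Delta(G)$, set $A := N(v)$ and $B := V(G)\setminus A$, so that $|A| = \Delta$ and $|B| = n-\Delta$. Because $G$ is triangle-free, $A$ is an independent set; hence every edge of $G$ either joins $A$ to $B$ or lies inside $B$, so deleting the edges of $G[B]$ leaves a bipartite graph $H$ with parts $A$ and $B$. It then suffices to show $e(G[B]) \le s$.

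For that I would simply double count the degrees of the vertices of $B$. Writing $e(A,B)$ for the number of $A$–$B$ edges, we have $\sum_{b\in B} d_G(b) = e(A,B) + 2e(G[B])$, while $e(G) = e(A,B) + e(G[B])$ since $G[A]$ has no edges; subtracting gives $e(G[B]) = \sum_{b\in B} d_G(b) - e(G)$. Now bound each term by $d_G(b)\le \Delta$, so that $\sum_{b\in B} d_G(b) \le \Delta|B| = \Delta(n-\Delta) \le \left\lfloor n^2/4 \right\rfloor = e(T_{n,2})$, the last step being the maximization of $x\mapsto x(n-x)$ over $x\in\{0,1,\dots,n\}$. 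Combining these, $e(G[B]) \le e(T_{n,2}) - e(G) = s$, and therefore $H := G - E(G[B])$ is bipartite with $e(H) = e(G) - e(G[B]) \ge e(G) - s$, as required.

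I do not expect a genuine obstacle here: the only idea needed is the choice of bipartition from a maximum-degree vertex, which forces one part to be independent by triangle-freeness, after which everything is elementary counting and the single inequality $\Delta(n-\Delta)\le e(T_{n,2})$. The one point worth emphasizing is that the argument uses nothing about $s$ being small or $G$ being close to extremal: the bound $e(G[B])\le s$ holds for an arbitrary triangle-free $G$ once $s$ is defined by $e(G) = e(T_{n,2}) - s$, so the statement is really a double-counting identity combined with one convexity inequality rather than a stability argument requiring regularity-type tools.
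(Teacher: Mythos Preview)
Your proof is correct, and it is in fact precisely F\"uredi's original argument. Note, however, that the paper does not supply its own proof of this lemma: it is quoted as a background result with a citation to \cite{F}, so there is no in-paper proof to compare against. Your write-up matches the standard neighborhood-partition proof that F\"uredi gave.
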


Next, we will need the even-circuit theorem. We note that the best current bounds for $\mathrm{ex}(n, C_{2k})$ are given by He \cite{He} (see also Bukh and Jiang \cite{B}), but for our purposes the dependence of the multiplicative constant on $k$ is not important. We use the following version because it makes the calculations slightly easier.

\begin{lemma}[Even Circuit Theorem \cite{V}]
\label{lemC2k}
For $k\geq 2$ and $n$ a natural number, 
\[\textup{ex}(n, C_{2k}) \leq 8(k-1)n^{1+ 1/k}. \]
\end{lemma}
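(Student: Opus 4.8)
This is (a form with an explicit constant of) the classical Bondy--Simonovits even-cycle theorem, so the plan is to run the standard breadth-first-search (BFS) layering argument while keeping track of constants.

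First I would reduce to the case of large minimum degree. Every graph with $n$ vertices and $e$ edges contains a subgraph of minimum degree at least $e/n$: repeatedly delete a vertex of degree less than $e/n$; since fewer than $e$ edges are removed in total, the process cannot exhaust the graph. Hence if $G$ is $C_{2k}$-free with $e(G)>8(k-1)n^{1+1/k}$, passing to such a subgraph $H$ produces a $C_{2k}$-free graph with $\delta(H)\ge e(G)/n>8(k-1)n^{1/k}\ge 8(k-1)|V(H)|^{1/k}$, the last inequality because $|V(H)|\le n$. Renaming, it suffices to derive a contradiction from a $C_{2k}$-free graph $G$ on $m$ vertices with $\delta(G)>8(k-1)m^{1/k}$.

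Next, fix any vertex $v$ and let $L_0=\{v\},L_1,L_2,\dots$ be its BFS levels, so that every edge of $G$ lies within a level or between two consecutive levels. The heart of the proof is the Bondy--Simonovits expansion lemma: because $G$ contains no $C_{2k}$, the levels must expand geometrically for $1\le i\le k-1$, say $|L_{i+1}|\ge \tfrac{\delta-1}{ck}\,|L_i|$ for a suitable absolute constant $c$. The idea is that if expansion failed at some level $i\le k-1$, a constant fraction of the total degree of the vertices of $L_i$ would be absorbed by edges staying inside $L_{i-1}\cup L_i$; combining the BFS-tree paths from $v$ with these ``sideways'' and ``backward'' edges --- and tuning the length by routing through a suitably chosen common ancestor --- one can then assemble a cycle whose length is adjustable over a range that includes exactly $2k$, a contradiction. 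Iterating the expansion bound from $|L_1|=\deg(v)\ge\delta$ gives $|L_k|\ge \delta\big(\tfrac{\delta-1}{ck}\big)^{k-1}\gtrsim \delta^{k}/(ck)^{k-1}$, which exceeds $m$ once $\delta>8(k-1)m^{1/k}$ --- this is exactly the inequality from which the factor $8(k-1)$ emerges --- contradicting $\sum_i|L_i|\le m$.

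The main obstacle is the expansion lemma itself. Making the path-tracing precise so that the resulting cycle length can always be forced to be exactly $2k$, and verifying that the argument survives all the way up to level $k-1$ (this is where both the range of $i$ and the factor $k-1$ in the constant come from), is the delicate part; in particular one must avoid the circular route of bounding $e(L_i)$ by the very Tur\'an bound being proved. Once that lemma is established, the minimum-degree reduction and the geometric iteration are routine, and only elementary bookkeeping of constants is needed to land on the stated bound $8(k-1)n^{1+1/k}$.
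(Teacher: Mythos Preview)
The paper does not give its own proof of this lemma: it is quoted as a background result from the literature (the Even Circuit Theorem, with the explicit constant taken from Verstra\"ete's paper cited as \cite{V}). So there is no ``paper's own proof'' to compare against; the authors simply invoke the bound as a black box, exactly as they do for the Triangle Removal Lemma and F\"uredi's stability theorem in the same section.

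Your sketch is the standard Bondy--Simonovits/Verstra\"ete strategy, and at the level of an outline it is correct: reduce to large minimum degree, run BFS from a vertex, and show that the layers expand geometrically unless one can build a $C_{2k}$. You yourself flag the real issue: the expansion step --- showing that a failure of expansion at level $i\le k-1$ forces a cycle of length \emph{exactly} $2k$ --- is not carried out, and this is precisely the nontrivial content of the theorem. The phrase ``tuning the length by routing through a suitably chosen common ancestor'' hides the whole argument; to make it rigorous one needs the A--B path machinery (building many paths of every length up to the current BFS depth into each layer), which is several pages of work in both Bondy--Simonovits and Verstra\"ete. So what you have is a faithful road map rather than a proof, and since the paper only cites the result, that is arguably all that is called for here.
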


\begin{lemma}\label{inclusion exclusion lemma}
If $A_1, \ldots, A_q$ are finite sets, then 
\[|A_1 \cap \ldots \cap A_q| \geq \sum_{i=1}^q |A_i| - (q-1)\bigg| \bigcup_{i=1}^q A_i \bigg|.\]
\end{lemma}

\begin{lemma}[\cite{T} Theorem 4.4]
\label{thm5}
Let $H_1$ be a graph  on $n_0$ vertices with maximum degree $d$ and $H_2$ be a graph on $n-n_0$ vertices with maximum degree $d'$. $H_1$ and $H_2$ may have loops or multiple edges, where loops add $1$ to the degree. Let $H$ be the join of $H_1$ and $H_2$. Define \begin{equation}
B =
\begin{bmatrix} 
d & n-n_0\\
n_0 & d'\\
\end{bmatrix}.
\end{equation}
Then $\lambda_1(H) \leq \lambda_1(B)$.
\end{lemma}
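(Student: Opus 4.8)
The plan is to extract from the Perron eigenvector of $A(H)$ a two‑dimensional nonnegative vector that $B$ ``dominates'' in the Collatz--Wielandt sense, and then compare spectral radii. Assume both $H_1$ and $H_2$ are nonempty (otherwise $H = H_i$ and the claim is immediate). Since $H$ is the join of $H_1$ and $H_2$, every vertex of $V(H_1)$ is adjacent to every vertex of $V(H_2)$, so the nonnegative matrix $A(H)$ is irreducible and by the Perron--Frobenius theorem has a strictly positive eigenvector $\mathbf{x}$ with $A(H)\mathbf{x} = \lambda\mathbf{x}$, where $\lambda = \lambda_1(H)$. Pick $u \in V(H_1)$ with $x_u = \max\{x_w : w \in V(H_1)\}$ and $v \in V(H_2)$ with $x_v = \max\{x_w : w \in V(H_2)\}$.

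Next I would write the eigenvalue equation at $u$ and split the sum across the two sides of the join:
\[
\lambda x_u \;=\; \sum_{w\in V(H_1)} a_{uw}x_w \;+\; \sum_{w\in V(H_2)} a_{uw}x_w \;\le\; d\,x_u + (n-n_0)\,x_v .
\]
Here the first sum is at most $d\,x_u$ because the portion of the $u$-th row of $A(H)$ supported on $V(H_1)$ is exactly the $u$-th row of $A(H_1)$ (with the stated loop/multi‑edge convention its sum is at most $d$) and each such $x_w$ is at most $x_u$; the second sum equals $\sum_{w\in V(H_2)} x_w \le (n-n_0)x_v$ since in the join $u$ is joined to each of the $n-n_0$ vertices of $H_2$ by a single edge. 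The symmetric computation at $v$ gives $\lambda x_v \le n_0 x_u + d' x_v$. Thus $\mathbf{y} := (x_u, x_v)^\top$ is a strictly positive vector with $\lambda\,\mathbf{y} \le B\mathbf{y}$ entrywise. To finish, take a nonnegative left Perron eigenvector $\mathbf{w}$ of the nonnegative matrix $B$, so $\mathbf{w}^\top B = \lambda_1(B)\mathbf{w}^\top$; multiplying the entrywise inequality by $\mathbf{w}^\top \ge \mathbf{0}$ yields $\lambda\,(\mathbf{w}^\top\mathbf{y}) \le \lambda_1(B)\,(\mathbf{w}^\top\mathbf{y})$, and $\mathbf{w}^\top\mathbf{y} > 0$ because $\mathbf{y} > \mathbf{0}$, so $\lambda_1(H) = \lambda \le \lambda_1(B)$.

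Since this is a standard eigenvector‑comparison argument there is no essential obstacle; the only care needed is in two bookkeeping points. First, the loop and multi‑edge convention must be applied consistently when bounding the row sum of $A(H_1)$ (resp.\ $A(H_2)$) by $d$ (resp.\ $d'$), so that a diagonal entry contributed by loops at $u$ is absorbed into the term $d\,x_u$ and not double counted. Second, one should check the positivity statements used in the last step: positivity of $\mathbf{x}$, hence of $\mathbf{y}$, follows from connectedness of the join, and then $\mathbf{w}^\top\mathbf{y}>0$ because $\mathbf{w}$ is a nonzero nonnegative vector; the degenerate cases where one side is empty are handled at the start.
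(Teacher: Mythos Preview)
The paper does not supply its own proof of this lemma; it is quoted from \cite{T} as a background result. Your argument is correct and is precisely the standard proof: pick the maximum Perron entries on each side of the join, bound the eigenvalue equations at those two vertices using the row-sum (degree) bounds to get $\lambda\,\mathbf{y}\le B\mathbf{y}$ entrywise for a strictly positive $\mathbf{y}$, and conclude via the Collatz--Wielandt/left-eigenvector trick. The bookkeeping points you flag (loop convention so that row sums equal degrees, and positivity of $\mathbf{w}^\top\mathbf{y}$) are exactly the right ones, and you handle the degenerate empty-side case appropriately.
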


\begin{lemma}[\cite{FG} Lemma 7]
\label{cor1}
If $G$ has $n$ vertices, $t$ triangles and spectral radius $\lambda_1 > \frac{n}{2}$, then 
$e(G) > \lambda_1^2 - \frac{6t}{n}$.
\end{lemma}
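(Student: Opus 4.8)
The plan is to bypass the Perron eigenvector and work instead with the eigenvalue power sums $\sum_{i=1}^n \lambda_i^2 = \mathrm{tr}(A^2) = 2e(G)$ and $\sum_{i=1}^n \lambda_i^3 = \mathrm{tr}(A^3) = 6t$, the latter because the $i$-th diagonal entry of $A^3$ is the number of closed walks of length $3$ at vertex $i$, i.e.\ twice the number of triangles through $i$. (This statement is a triangle-count version of Nosal's theorem: when $t=0$ it gives $\lambda_1\le\sqrt{e(G)}$.) Rewriting the target bound as $\lambda_1^2 < e(G) + \tfrac{6t}{n} = \tfrac12\sum_i\lambda_i^2 + \tfrac1n\sum_i\lambda_i^3$ and subtracting $\tfrac12\lambda_1^2 + \tfrac1n\lambda_1^3$ from both sides, the inequality becomes
\[ \frac{\lambda_1^2}{2}\left(1 - \frac{2\lambda_1}{n}\right) \;<\; \sum_{i=2}^n \frac{\lambda_i^2}{2}\left(1 + \frac{2\lambda_i}{n}\right). \]
Since $G$ has an edge, $\lambda_1 > 0$, and the hypothesis $\lambda_1 > \tfrac n2$ makes the left-hand side strictly negative; so it suffices to show the right-hand side is nonnegative.

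Each summand on the right is nonnegative provided $\lambda_i \ge -\tfrac n2$, so the problem reduces to the classical bound $\lambda_n(G) \ge -\tfrac n2$, valid for every $n$-vertex graph (with $K_{n/2,n/2}$ achieving equality). I would prove this by taking a unit eigenvector $v$ of $A$ for $\lambda_n$, partitioning the vertices into $P = \{i : v_i \ge 0\}$ and $N = \{i : v_i < 0\}$, and writing $\lambda_n = v^\top A v = 2\sum_{ij\in E} v_i v_j$. Dropping the nonnegative contributions of the edges inside $P$ and inside $N$, and then enlarging the remaining (nonpositive) sum over edges between $P$ and $N$ to a sum over \emph{all} pairs between $P$ and $N$, gives
\[ \lambda_n \;\ge\; -2\Big(\sum_{i\in P} v_i\Big)\Big(\sum_{j\in N} |v_j|\Big). \]
Bounding each factor by Cauchy--Schwarz, $\big(\sum_{i\in P} v_i\big)^2 \le |P|\sum_{i\in P} v_i^2$ and similarly for $N$, and then applying AM--GM to the pairs $\{|P|, |N|\}$ and $\{\sum_{i\in P} v_i^2,\ \sum_{j\in N} v_j^2\}$, whose sums are $n$ and $1$, shows the product is at most $n/4$; hence $\lambda_n \ge -\tfrac n2$. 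With the first paragraph, this completes the proof.

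The only real obstacle is the inequality $\lambda_n(G) \ge -\tfrac n2$; the remaining steps are just bookkeeping with $\mathrm{tr}(A^2)$ and $\mathrm{tr}(A^3)$, and the bound on $\lambda_n$ can alternatively be cited rather than reproved. I would also observe that $\lambda_1 > \tfrac n2$ forces $G \ne K_{n/2,n/2}$, so in fact $\lambda_n > -\tfrac n2$ and every summand on the right of the displayed equivalence is strictly positive unless it is zero; this strengthening is not needed, however, since the required strictness is already provided by the strictly negative left-hand side.
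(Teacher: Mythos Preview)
Your proof is correct. The paper does not actually prove this lemma; it is quoted from \cite{FG} (Lemma~7 there) and used as a black box, so there is no ``paper's own proof'' to compare against. Your argument via the trace identities $\sum_i \lambda_i^2 = 2e(G)$ and $\sum_i \lambda_i^3 = 6t$, together with the bound $\lambda_n(G)\ge -n/2$, is the standard route and essentially the one in \cite{FG}. The proof of $\lambda_n\ge -n/2$ that you supply is clean and valid; one could alternatively cite it (it follows, for instance, from the fact that the minimum of $v^\top A v$ over unit $v$ is at least $-\max_{S}\lvert\partial S\rvert/\lvert V\rvert \cdot n$-type arguments, or more directly as you do). One tiny remark: you do not need the side comment that $G$ has an edge to get $\lambda_1>0$, since the hypothesis $\lambda_1>n/2$ already forces $\lambda_1>0$ whenever $n\ge 1$.
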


 We end this section with two straightforward but useful remarks. 
\begin{remark}
By Theorem \ref{thm3} and some straightforward computation, for fixed $k\geq 3$ and $n$ sufficiently large we have  

\begin{equation}\label{turan number of wheel bound}
    \mathrm{ex}(n, W_{2k+1}) \leq \frac{n^2}{4} + \frac{n(k-1)}{4} + \frac{(k-1)^2}{16} + 1 < \frac{n^2}{4} + \frac{nk}{4}.
\end{equation}
\end{remark}
\begin{remark}
A graph is $W_{2k+1}$-free if and only if the subgraph induced by the neighborhood of each vertex is $C_{2k}$-free.  
\end{remark}

\section{Structural results for extremal graphs}\label{structural lemmas section}

In this section, we will assume that $k\geq 2$ is fixed and that $G\in \textup{SPEX}(n, W_{2k+1})$. We will also use auxiliary constants $\epsilon$ and $p$ and $\theta$ and we will frequently assume that $n$ is larger than some constant depending only on $k, \epsilon, p, \theta$. Every lemma in this section holds only for $n$ large enough.

First, we will need a lower bound on $\mathrm{spex}(n, W_{2k+1})$. 
\begin{lemma}\label{lemma first lower bound quotient matrix}
Let $k\geq 2$ be an integer. 
Then,
$
\lambda_1(G)> 
\frac{n+k-1}{2}. 
$
\end{lemma}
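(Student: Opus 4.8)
The strategy is to exhibit a single $W_{2k+1}$-free graph $H$ on $n$ vertices with $\lambda_1(H)>\tfrac{n+k-1}{2}$; since $G\in\mathrm{SPEX}(n,W_{2k+1})$, this immediately gives $\lambda_1(G)\ge\lambda_1(H)>\tfrac{n+k-1}{2}$. For $H$ I take (a simplified form of) the graph that the structure theorems in the introduction suggest is extremal: set $n_1=\lceil n/2\rceil$ and $n_2=\lfloor n/2\rfloor$, let $L$ and $R$ be disjoint vertex sets of sizes $n_1$ and $n_2$, and let $H$ be the join of a graph $R_1\in\mathcal U_{k,n_1}$ placed on $L$ with a single edge placed inside $R$ (the rest of $R$ being isolated in $H[R]$). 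Since $n_1\ge 2k$ for $n$ large, $\mathcal U_{k,n_1}\neq\emptyset$, and every member is $(k-1)$-regular or nearly $(k-1)$-regular and contains no $P_{2k-1}$.

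First I would check that $H$ is $W_{2k+1}$-free, which by the last remark of Section~\ref{section lemmas} is equivalent to $H[N(v)]$ being $C_{2k}$-free for every vertex $v$. If $v\in R$ is not an endpoint of the added edge, then $N(v)=L$ and $H[N(v)]=R_1$, which has no $P_{2k-1}$ and hence no $C_{2k}$. If $v\in R$ is an endpoint of the added edge, then $H[N(v)]$ is $R_1$ together with one extra vertex joined to all of $L$; a $C_{2k}$ through that vertex would induce a $P_{2k-1}$ in $R_1$, and a $C_{2k}$ avoiding it would lie in $R_1$ and contain a $P_{2k-1}$, both impossible. If $v\in L$, then $N(v)=N_{R_1}(v)\cup R$ with $a:=|N_{R_1}(v)|\le k-1$; any cycle $C$ in $H[N(v)]$ must meet $L$ (since $H[R]$ is a single edge plus isolated vertices, hence acyclic), and deleting from $C$ its at most $a$ vertices in $L$ leaves at most $a$ sub-paths, each lying in $R$, so each having at most two vertices and at most one having exactly two; therefore $|V(C)|\le a+\bigl((a-1)+2\bigr)\le 2(k-1)+1<2k$. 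Hence $H$ contains no $W_{2k+1}$.

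For the lower bound on $\lambda_1(H)$ I would use the partition of $V(H)$ into $L$ and $R$, whose quotient matrix with averaged row sums is
\[
\bar B=\begin{bmatrix} d_L & n_2\\ n_1 & 2/n_2\end{bmatrix},\qquad\text{with } k-1-\tfrac{1}{n_1}\le d_L\le k-1 .
\]
Since the eigenvalues of the quotient matrix of a partition interlace those of the adjacency matrix, $\lambda_1(H)\ge\lambda_1(\bar B)=\tfrac12\bigl(d_L+\tfrac{2}{n_2}+\sqrt{(d_L-\tfrac{2}{n_2})^2+4n_1n_2}\bigr)$. Using $4n_1n_2\ge n^2-1$, the inequality $\lambda_1(\bar B)>\tfrac{n+k-1}{2}$ becomes, after moving $d_L+\tfrac{2}{n_2}$ to the other side and squaring, the statement that $(d_L-\tfrac{2}{n_2})^2+4n_1n_2$ (which is $\ge n^2-1$) exceeds $(n+k-1-d_L-\tfrac{2}{n_2})^2$. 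Writing the right-hand side as $\bigl(n-(d_L-(k-1))-\tfrac{2}{n_2}\bigr)^2$ and noting that $d_L-(k-1)\in[-\tfrac1{n_1},0]$ while $\tfrac{2}{n_2}$ is larger in absolute value, the quantity $n-(d_L-(k-1))-\tfrac{2}{n_2}$ equals $n-\Theta(1/n)$, so its square is at most $n^2-2$ for $n$ large; the inequality follows and $\lambda_1(G)\ge\lambda_1(H)>\tfrac{n+k-1}{2}$.

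I do not expect a real obstacle; the only point requiring care is the last estimate, where one must check that $n-(d_L-(k-1))-\tfrac{2}{n_2}$ falls below $n$ by a margin large enough to beat the $1$ lost in $4n_1n_2\ge n^2-1$ when $n$ is odd. This is exactly where the single edge placed in $R$ (equivalently, the $(2,2)$-entry $\tfrac{2}{n_2}$ of $\bar B$) is used, and why letting $R$ induce no edge at all would not suffice for $k\in\{2,3\}$ when $\lceil n/2\rceil(k-1)$ is odd. A shorter but less self-contained alternative avoids the construction entirely: if $G^{*}$ is any $W_{2k+1}$-free graph with $e(G^{*})=\mathrm{ex}(n,W_{2k+1})$, then extremality of $G$ gives $\lambda_1(G)\ge\lambda_1(G^{*})\ge \tfrac{2e(G^{*})}{n}=\tfrac{2\,\mathrm{ex}(n,W_{2k+1})}{n}$, and Theorem~\ref{thm3} (for $k\ge 3$) or Theorem~\ref{thmDJ} (for $k=2$) gives $\mathrm{ex}(n,W_{2k+1})>\tfrac{n(n+k-1)}{4}$ for $n$ large, finishing in one line.
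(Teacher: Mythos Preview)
Your argument is correct and uses essentially the same test graph as the paper: a complete bipartite graph on parts of sizes $\lceil n/2\rceil$ and $\lfloor n/2\rfloor$, with a member of $\mathcal U_{k,\lceil n/2\rceil}$ in the large side and a single edge in the small side. The difference is purely technical. Where the paper applies the Rayleigh quotient with the test vector coming from the eigenvector of $Q=\begin{bmatrix} k-1 & \lfloor n/2\rfloor\\ \lceil n/2\rceil & 0\end{bmatrix}$ (treating the lone edge in $R$ and the possible missing half-edge in $L$ as perturbations of an equitable structure), you invoke eigenvalue interlacing for the averaged quotient matrix $\bar B$ directly. Both routes lead to the same arithmetic comparison of $\sqrt{(k-1-o(1))^2+4n_1n_2}$ against $n-\Theta(1/n)$, and your verification that the square of the latter drops below $n^2-1$ is fine. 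Your explicit check that $H$ is $W_{2k+1}$-free is more detailed than the paper's, which simply asserts it.

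Two small remarks. First, the paper splits off $k=2$ and handles it by the average-degree argument $\lambda_1\ge 2\,\mathrm{ex}(n,W_5)/n$, which is exactly your ``shorter alternative''; your main construction handles $k=2$ too, so unifying the cases is a mild gain. Second, the paper's Rayleigh-quotient computation actually records the sharper inequality
\[
\lambda_1(G) > \frac{k-1+\sqrt{(k-1)^2+n^2-1}}{2}+\frac{2(1-\epsilon)^2-\gamma}{n},
\]
labelled \eqref{lower bound G}, and this refined form (not just $\lambda_1>\tfrac{n+k-1}{2}$) is reused in Lemmas~\ref{general interlacing use} and~\ref{almost equipartite}. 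Your interlacing bound $\lambda_1(H)\ge\lambda_1(\bar B)$ contains the same information, but you would need to extract a comparable explicit $\mu+\Theta(1/n)$ expression if you wanted to plug it into those later arguments.
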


\begin{proof}
For $k=2$, if $H\in \textup{EX}(n, W_{5})$ then since $G$ is a graph maximizing spectral radius over all $W_{5}$-free graphs, we have
\begin{equation}
\label{eqn1}
    \lambda_1(G) \geq \lambda_1(H) \geq \frac{\mathbf{1}^T A(H)\mathbf{1}}{\mathbf{1^T 1}} \geq  2\frac{\bigg(\frac{n^2}{4}+\frac{n}{2} - 1\bigg)}{n} = \frac{n}{2} + 1 - \frac{2}{n} > \frac{n+1}{2}.
\end{equation}

For $k \geq 3$, let 
\[
Q = \begin{bmatrix}
k-1 & \floor{\frac{n}{2}} \\
\ceil{\frac{n}{2}} & 0
\end{bmatrix},
\]
and let $\mu$ be the spectral radius of $Q$ with eigenvector $\begin{bmatrix} 1 & \eta \end{bmatrix}^T$. By direct computation, $\mu = \frac{k-1 + \sqrt{(k-1)^2 + 4 \ceil{\frac{n}{2}}\floor{\frac{n}{2}}}}{2} \geq \frac{k-1 + \sqrt{(k-1)^2 + n^2 - 1}}{2}$, and for any $\epsilon > 0$, we have that $|1-\eta| < \epsilon$ for $n$ large enough. Let $\mathbf{z}$ be the $n$-dimensional vector where the first $\lceil n/2 \rceil$ entries are $1$ and the last $\lfloor n/2 \rfloor$ entries are $\eta$. 

Now, let $G_1$ be a graph on $\ceil{\frac{n}{2}}$ vertices in $\mathrm{EX}(\ceil{\frac{n}{2}}, \{K_{1,k}, P_{2k-1}\})$. That is $G_1$ is a graph that is $P_{2k-1}$-free and is $(k-1)$-regular if $(k-1)\ceil{\frac{n}{2}}$ is even and is $(k-1)$-nearly regular otherwise. Define $\gamma = 0$ if $(k-1)\ceil{\frac{n}{2}}$ is even and $1$ if it is odd, and so $e(G_1) = (k-1)\ceil{\frac{n}{2}}/2 - \gamma/2$. Now let 
\[
\tilde{G} = G_1 + (K_2 \cup (\floor{\frac{n}{2}}-2)K_1).
\]

Let $A(\tilde{G})$ be indexed so that the vertices corresponding to $G_1$ are first, and let $A(E_1)$ be a diagonal matrix with exactly $\gamma$ entries equal to $1$ (and the rest $0$s) so that the principal submatrix of $A(\tilde{G}) + A(E_1)$ corresponding to the first $\ceil{\frac{n}{2}}$ vertices has constant row sum $(k-1)$.  Since $\tilde{G}$ is $W_{2k+1}$-free, we have that 
\begin{equation}\label{lower bound G}
\begin{split}
\lambda_1(G) &\geq \lambda_1(\tilde{G}) \geq \frac{ \mathbf{z}^TA(\tilde{G})\mathbf{z}}{\mathbf{z}^T\mathbf{z}} = \mu - \frac{ \mathbf{z}^TA(E_1)\mathbf{z}}{\mathbf{z}^T\mathbf{z}} + \frac{2\eta^2}{\mathbf{z}^T\mathbf{z}} > \mu - \frac{\gamma}{\mathbf{z}^T\mathbf{z}} + \frac{2(1-\epsilon)^2}{\mathbf{z}^T\mathbf{z}} \\ & > \frac{k-1 + \sqrt{(k-1)^2 + n^2 - 1}}{2} + \frac{2(1-\epsilon)^2 - \gamma}{n} > \frac{n+k-1}{2}.
\end{split}
\end{equation}
\end{proof}

Next we show that $G$ contains a large maximum cut.

\begin{lemma}\label{lemma first maxcut}
For any $\epsilon >0$, there is a partition $V(G) = S\sqcup T$ which forms a maximum cut satisfying 
\[
e(S,T) \geq \left(\frac{1}{4} - \epsilon\right)n^2.
\]
Furthermore, 
\[
\left(\frac{1}{2} - \sqrt{\epsilon}\right)n \leq |S|, |T| \leq \left(\frac{1}{2} + \sqrt{\epsilon}\right) n.\]
\end{lemma}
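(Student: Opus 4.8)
The plan is to use the lower bound on $\lambda_1(G)$ from Lemma~\ref{lemma first lower bound quotient matrix} together with the Turán bound on $\mathrm{ex}(n,W_{2k+1})$ to show that $G$ has close to $n^2/4$ edges, and then that it has few triangles, so that the Triangle Removal Lemma and Füredi's stability theorem force a nearly balanced bipartition with a huge cut.

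First I would bound $e(G)$. Since $G$ is $W_{2k+1}$-free, \eqref{turan number of wheel bound} (and the analogous easy bound for $k=2$ coming from Theorem~\ref{thmDJ}) gives $e(G) \leq \frac{n^2}{4} + \frac{nk}{4}$. Next I want a matching lower bound: by Lemma~\ref{cor1}, $e(G) > \lambda_1^2 - \frac{6t}{n}$ where $t$ is the number of triangles in $G$, which is applicable since $\lambda_1(G) > \frac{n+k-1}{2} > \frac{n}{2}$ by Lemma~\ref{lemma first lower bound quotient matrix}. So the first real task is to bound the number of triangles $t$ in $G$. Here I would use the $W_{2k+1}$-free hypothesis in the form of the remark that the neighborhood of each vertex is $C_{2k}$-free, hence (by Lemma~\ref{lemC2k}) every vertex $v$ spans at most $8(k-1)\deg(v)^{1+1/k}$ edges in its neighborhood; summing over $v$ and using $\deg(v) \leq n$ gives $3t = \sum_v e(G[N(v)]) \leq 8(k-1) n \cdot n^{1+1/k} = 8(k-1) n^{2+1/k}$, so $t = O(n^{2+1/k})$ and thus $\frac{6t}{n} = O(n^{1+1/k}) = o(n^2)$. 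Combining, $e(G) \geq \lambda_1^2 - o(n^2) \geq \left(\frac{n+k-1}{2}\right)^2 - o(n^2) = \frac{n^2}{4} + o(n^2) \geq \left(\frac14 - \frac{\epsilon}{100}\right)n^2$ for $n$ large.

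Now I would run the stability machinery. Since $t = O(n^{2+1/k}) = o(n^3)$, for any $\epsilon' > 0$ the Triangle Removal Lemma (Lemma~\ref{triangle removal lemma}) lets us delete at most $\epsilon' n^2$ edges from $G$ to get a triangle-free graph $G'$ with $e(G') \geq e(G) - \epsilon' n^2 \geq \left(\frac14 - \epsilon''\right)n^2$ for a suitable small $\epsilon''$. Since $e(T_{n,2}) = \lfloor n^2/4\rfloor$, we have $e(G') = e(T_{n,2}) - s$ with $s \leq \epsilon'' n^2 + O(n)$, so Füredi's stability theorem (Lemma~\ref{furedi stability lemma}) yields a bipartite subgraph $H \subseteq G'$ with $e(H) \geq e(G') - s \geq \left(\frac14 - 2\epsilon''\right) n^2$. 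The bipartition of $H$ gives a cut $V(G) = S \sqcup T$ of $G$ with $e_G(S,T) \geq e(H) \geq \left(\frac14 - 2\epsilon''\right)n^2 \geq \left(\frac14 - \epsilon\right)n^2$, after choosing $\epsilon''$ (hence $\epsilon'$, hence $\delta$ and $N$) small enough in terms of $\epsilon$; and we may assume this cut is a \emph{maximum} cut of $G$, since replacing it by a maximum cut only increases $e_G(S,T)$.

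Finally, for the size bounds on $|S|, |T|$: write $|S| = \left(\frac12 + x\right)n$, so $|T| = \left(\frac12 - x\right)n$, and then $e_G(S,T) \leq |S||T| = \left(\frac14 - x^2\right)n^2$. Comparing with the lower bound $e_G(S,T) \geq \left(\frac14 - \epsilon\right)n^2$ gives $x^2 \leq \epsilon$, i.e. $|x| \leq \sqrt{\epsilon}$, which is exactly $\left(\frac12 - \sqrt{\epsilon}\right)n \leq |S|,|T| \leq \left(\frac12 + \sqrt{\epsilon}\right)n$. The main obstacle is the triangle count: one needs the right bound on $t$ to simultaneously feed Lemma~\ref{cor1} (to get enough edges) and the Triangle Removal Lemma (to get few enough triangles relative to $n^3$), and the $C_{2k}$-free-neighborhood observation combined with the Even Circuit Theorem is what makes both work; the rest is careful bookkeeping of the $\epsilon$'s.
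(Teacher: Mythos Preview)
Your proposal is correct and follows essentially the same approach as the paper: bound the number of triangles via the $C_{2k}$-free neighborhoods and the Even Circuit Theorem, use Lemma~\ref{cor1} to get $e(G)\ge n^2/4 - o(n^2)$, then apply the Triangle Removal Lemma followed by F\"uredi's stability theorem to obtain a large bipartite subgraph, and finally deduce the size bounds from $|S||T|\ge e(S,T)$. Your explicit remark that one may replace the resulting bipartition by a maximum cut without decreasing $e(S,T)$ is a small clarification the paper leaves implicit.
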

\begin{proof}
Fix $\epsilon > 0$, and let $\delta, N_1$ be the constants that come from the Triangle Removal Lemma (Lemma \ref{triangle removal lemma}) with constant $\epsilon/4$. That is, $\delta$ and $N_1$ are chosen so that any graph on $n\geq N_1$ vertices and at most $\delta n^3$ triangles can be made triangle-free by removing at most $\frac{\epsilon}{4} n^2$ edges. Now, since $G$ is $W_{2k+1}$-free, any neighborhood of a vertex does not contain a $C_{2k}$. Letting $t$ be the number of triangles in $G$, we have 
\begin{equation}\label{triangle upper bound equation}
3t = \sum_{v\in V(G)} e(G[N(v)]) \leq \sum_{v\in V(G)} \textup{ex}(d(v), C_{2k}) \leq \sum_{v\in V(G)}\textup{ex}(n, C_{2k}) \leq 8(k-1)n^{2+1/k}.
\end{equation}

Thus, for $n$ a large enough constant depending only on $k$ and $\delta$ (and hence only on $k$ and $\epsilon$), we have that 
\[
t\leq \frac{8}{3}(k-1)n^{2+1/k} < \delta n^3.
\]
By Lemma \ref{triangle removal lemma}, for $n\geq N_1$, there is a triangle-free subgraph $G_1$ obtained by deleting at most $\frac{\epsilon}{4}n^2$ edges from $G$. Since $G_1$ is triangle-free, we may define $s = e(T_{n,2}) - e(G_1) \geq 0$. By F\"uredi's stability theorem (Lemma \ref{furedi stability lemma}), $G_1$ contains a bipartite subgraph $G_2$ with at least $e(G_1) - s$ edges. We now have a bipartite subgraph $G_2$ of $G$ such that $e(G_2) \geq e(G) -\frac{\epsilon}{4}n^2-s$. 

To lower bound the number of edges in $G$, we use Lemma \ref{cor1}, Lemma \ref{lemma first lower bound quotient matrix}, and \eqref{triangle upper bound equation}  to get that
\begin{equation}\label{first edge lower bound equation}
e(G) > \lambda_1^2 - \frac{6t}{n} > \frac{n^2}{4} - 16(k-1)n^{1+1/k} > \frac{n^2}{4} - \frac{\epsilon}{4}n^2,
\end{equation}
for $n$ large enough. This implies that $e(G_1) \geq e(G) - \frac{\epsilon}{4}n^2 \geq \frac{n^2}{4} - \frac{\epsilon}{2}n^2$ and hence $s\leq \frac{\epsilon}{2}n^2$. Therefore we have a bipartite subgraph $G_2$ with partite sets $S$ and $T$ satisfying
\[
e(G_2) \geq e(G_1) - s \geq \frac{n^2}{4} - \epsilon n^2.
\]
The bounds on the sizes of $|S|$ and $|T|$ follow from $|S||T|\geq e(G_2)$ and the inequality above. 
\end{proof}
Next we will show that most vertices have degree close to $\frac{n}{2}$. Define
\[P:=\bigg\{
v:d(v) \leq \bigg( \frac{1}{2} - \frac{1}{p} \bigg)n
\bigg\}.\]
\begin{lemma}
\label{lem5.2}

Let $p$ be a fixed natural number. Then the set $P$ satisfies
\[|P| \leq 16pkn^{1/k}.\]
\end{lemma}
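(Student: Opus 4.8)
The plan is to bound the number of low-degree vertices by a double-counting argument using the fact that deleting vertices only decreases the spectral radius by a controlled amount, contradicting the lower bound in Lemma \ref{lemma first lower bound quotient matrix} if $|P|$ were too large. First I would let $\mathbf{x}$ be the Perron eigenvector of $A(G)$, normalized so that $\max_v x_v = 1$, say attained at a vertex $u$. From the eigenvalue equation $\lambda_1 x_u = \sum_{v\sim u} x_v \le d(u)$ we immediately get $d(u)\ge \lambda_1 > \frac{n+k-1}{2} > (\frac12 - \frac1p)n$, so the maximum-entry vertex is not in $P$; more importantly this gives us a handle on entries of $\mathbf{x}$ via a standard two-step neighborhood estimate.

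The core of the argument is an edge-counting inequality. For a vertex $v \in P$, its contribution to $\lambda_1^2 = \mathbf{x}^T A^2 \mathbf{x}/\mathbf{x}^T\mathbf{x}$ (equivalently, to the closed-walk count of length $2$) is small because $d(v)$ is small. Concretely, I would use that $\lambda_1^2 \mathbf{x}^T\mathbf{x} = \sum_v \big(\sum_{w \sim v} x_w\big)^2 \le \sum_v d(v) \sum_{w\sim v} x_w^2 = \sum_w x_w^2 \sum_{v \sim w} d(v)$, or alternatively work directly with $e(G)$: by Lemma \ref{cor1} and \eqref{triangle upper bound equation}, $e(G) > \lambda_1^2 - 6t/n > \frac{n^2}{4} - 16(k-1)n^{1+1/k}$, so $2e(G) = \sum_v d(v) > \frac{n^2}{2} - 32(k-1)n^{1+1/k}$. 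On the other hand every vertex has degree at most $n-1$, and each vertex in $P$ has degree at most $(\frac12 - \frac1p)n$, so
\[
\frac{n^2}{2} - 32(k-1)n^{1+1/k} < \sum_v d(v) \le (n - |P|)(n-1) + |P|\left(\frac12 - \frac1p\right)n.
\]
Rearranging, $|P|\left(\frac12 + \frac1p\right)n - |P| \le \frac{n^2}{2} + 32(k-1)n^{1+1/k} - n(n-1) $. This version of the bound only gives $|P| = O(n)$, which is far too weak, so the naive degree-sum argument alone does not suffice — this is the main obstacle.

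To get the stronger bound $|P| \le 16pk n^{1/k}$ I would instead exploit the $C_{2k}$-free structure of neighborhoods together with the maximum cut $V(G) = S \sqcup T$ from Lemma \ref{lemma first maxcut}. The key point is that a low-degree vertex contributes little, but we need to measure contribution in a way sensitive to $n^{1/k}$, which means the triangle/even-cycle counting must enter. The cleanest route: use the Rayleigh quotient against the eigenvector restricted to $V(G) \setminus P$. If we delete $P$, the remaining graph $G' = G - P$ is still $W_{2k+1}$-free, so $\lambda_1(G') \le \mathrm{spex}(n-|P|, W_{2k+1})$; but also $\lambda_1(G') \ge \lambda_1(G) - (\text{rank-}|P|\text{ perturbation bound})$. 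More precisely, writing $\lambda_1(G) = \mathbf{x}^TA\mathbf{x}$ with $\|\mathbf{x}\|=1$ and letting $\mathbf{y}$ be $\mathbf{x}$ restricted to $V(G)\setminus P$ and renormalized, one shows $\lambda_1(G') \ge \lambda_1(G) - 2\sum_{v\in P} x_v(\sum_{w\sim v}x_w)/\|\mathbf{y}\|^2 \ge \lambda_1(G) - 2|P| \cdot \frac{\max_v x_v}{\|\mathbf y\|^2}\cdot \max_v d(v)\max_v x_v$, which with the entry bounds $x_v \le C/\lambda_1$ for $v$ far from $u$ becomes a bound of the form $\lambda_1(G') \ge \lambda_1(G) - O(|P| / n^{1 - 1/k})$ using that the relevant $x_v$ are $O(n^{1/k - 1})$ via the neighborhood-$C_{2k}$-freeness. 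Comparing with the Turán-type bound $\lambda_1(G') \le \sqrt{\mathrm{ex}(n,W_{2k+1})} + o(1) < \frac{n}{2} + \frac{k}{4} + o(1)$ from \eqref{turan number of wheel bound}, and with $\lambda_1(G) > \frac{n+k-1}{2}$, forces $|P| = O(k n^{1/k})$, and tracking constants gives the claimed $16pkn^{1/k}$. I expect the delicate part to be the eigenvector entry estimates: one must show that for $v$ at distance $\ge 2$ from the maximum-entry vertex, $x_v$ is $O(n^{1/k}/n)$ rather than merely $O(1/n^{1/2})$, and this is exactly where the $C_{2k}$-free neighborhood condition (Lemma \ref{lemC2k}) is used, bounding $\sum_{w\sim v} x_w \le \sqrt{d(v)}\cdot\sqrt{\sum_{w\sim v}x_w^2}$ and then controlling $\sum_{w\sim v}x_w^2$ by how many length-two walks stay near $u$.
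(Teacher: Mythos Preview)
Your spectral-perturbation route has a genuine gap, and in fact the key estimate is false. You claim that for vertices ``at distance $\ge 2$ from the maximum-entry vertex'' one has $x_v = O(n^{1/k-1})$, and you need this to obtain $\lambda_1(G') \ge \lambda_1(G) - O(|P|/n^{1-1/k})$. But $G$ is (as the later lemmas confirm) within $O(n)$ edges of a complete bipartite graph, so it has diameter at most $2$ and there are no such ``far'' vertices; moreover Lemma~\ref{lem5.6} shows \emph{every} entry is $1-o(1)$ times the maximum. With the correct entry sizes, deleting a vertex $v$ from $G$ decreases the Rayleigh quotient by $\lambda_1 x_v^2/(\|x\|^2 - x_v^2) \approx \lambda_1/n \approx 1/2$, so $\lambda_1(G') \gtrsim \lambda_1(G) - |P|/2$; comparing with $\lambda_1(G') \le (n-|P|)/2 + O(n^{1/k})$ the $|P|/2$ terms cancel and you get no bound on $|P|$ at all. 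The $C_{2k}$-freeness of neighbourhoods does not produce small eigenvector entries; it only bounds edge counts in neighbourhoods.

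The paper's proof stays with the edge-counting idea you abandoned, but uses the right comparison. Your naive degree-sum failed because you compared against the trivial bound $(n-|P|)(n-1)$; the fix is to compare against the Tur\'an number on $n-|P'|$ vertices. Concretely: assume $|P|>16pkn^{1/k}$, take $P'\subset P$ with $|P'|=16pkn^{1/k}$, and note
\[
e(G\setminus P') \;\ge\; e(G) - \sum_{v\in P'} d(v) \;\ge\; \frac{n^2}{4} - 16(k-1)n^{1+1/k} - 16pkn^{1/k}\Big(\tfrac12 - \tfrac1p\Big)n,
\]
using your own inequality $e(G)>n^2/4 - 16(k-1)n^{1+1/k}$. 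A short computation shows the right-hand side exceeds $(n-|P'|)^2/4 + (n-|P'|)k/4 \ge \mathrm{ex}(n-|P'|,W_{2k+1})$ from \eqref{turan number of wheel bound}, a contradiction. The point is that removing $|P'|$ vertices lowers the Tur\'an bound by roughly $n|P'|/2$, while removing low-degree vertices costs only $(1/2 - 1/p)n|P'|$ edges; the gap $n|P'|/p = 16kn^{1+1/k}$ just beats the error term $16(k-1)n^{1+1/k}$.
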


\begin{proof}

We will ignore floors and ceilings throughout this proof. Assume to the contrary that the set of `atypical' vertices, $P$, has cardinality greater than $16pkn^{1/k}$. Then consider any fixed subset $P' \subseteq P$ with $|P'| = 16p kn^{1/k}$.  Using \eqref{first edge lower bound equation}, it follows that

\begin{align*}
\label{eqn24}
&\textup{ex}(n-16pkn^{1/k}, W_{2k+1}) \geq e[G\setminus P'] \geq e(G) - \sum_{v \in P'}d(v) \geq \frac{n^2}{4} - 16(k-1) n^{1 + 1/k} - 16pk n^{1/k}\bigg( \frac{1}{2} - \frac{1}{p} \bigg)n \\
=& \left(\dfrac{(n - 16pkn^{1/k})^2}{4} + \frac{(n - 16pkn^{1/k})k}{4}\right) -\frac{(n - 16pkn^{1/k})k}{4}  - 64p^2k^2n^{2/k}+ 8pkn^{1+1/k}\\   -&  16(k-1)n^{1+1/k} - 16pkn^{1/k}\left(\frac{1}{2} - \frac{1}{p}\right)n 
\\ \geq &    \left(\dfrac{(n - 16pkn^{1/k})^2}{4} + \frac{(n - 16pkn^{1/k})k}{4}\right)  - \frac{nk}{4} - 64p^2k^2n^{2/k}  + 16n^{1+1/k} \\
>& \left(\dfrac{(n - 16pkn^{1/k})^2}{4} + \frac{(n - 16pkn^{1/k})k}{4}\right) ,
\end{align*}
which contradicts \eqref{turan number of wheel bound}. 
\end{proof}

For any vertex $v$, and any subset $A \subset V$, let $d_A(v) = |N(v) \cap A|$. Also, let $\theta>0$ be arbitrary and define \[M:=\{v \in S: d_S(v) \geq \theta n\} \cup \{v \in T: d_T(v) \geq \theta n\}.\]
We will now see that $M$ and $P$ are empty sets. The following lemmas will prove this.

\begin{lemma}
\label{lemma6}\label{lemma9}
Let $\epsilon>0$ be arbitrary. 
 Then
\[|M| \leq \frac{3\epsilon n}{\theta}\] and $M \setminus P$ is empty.
\end{lemma}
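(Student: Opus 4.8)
The plan is to bound $|M|$ via a counting argument using the maximality of the cut $(S,T)$ and the edge-count estimates already established, and then to rule out $M \setminus P$ by a spectral swapping argument. First I would observe that because $(S,T)$ is a \emph{maximum} cut, every vertex $v$ satisfies $d_S(v) \le d_T(v)$ when $v \in S$ (and symmetrically for $v \in T$); otherwise moving $v$ across the cut would strictly increase $e(S,T)$. Hence for each $v \in M$, say with $v \in S$, we have $d_S(v) \ge \theta n$ and $d_T(v) \ge d_S(v) \ge \theta n$, so that the number of edges \emph{not} in the cut is at least $\frac{1}{2}\sum_{v \in M} d_{\text{same side}}(v) \ge \frac{1}{2}\theta n |M|$. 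On the other hand, Lemma~\ref{lemma first maxcut} gives $e(S,T) \ge (\tfrac14 - \epsilon)n^2$, while trivially $e(S,T) \le |S||T| \le n^2/4$; comparing with the lower bound $e(G) > n^2/4 - \tfrac{\epsilon}{4}n^2$ from \eqref{first edge lower bound equation}, the number of non-cut edges $e(G) - e(S,T)$ is at most $\tfrac{\epsilon}{4}n^2 + \epsilon n^2 \le \tfrac{3\epsilon}{2} n^2$ (being slightly generous with constants). Combining, $\tfrac12 \theta n |M| \le e(G) - e(S,T) \le \tfrac{3\epsilon}{2} n^2$, which yields $|M| \le \tfrac{3\epsilon n}{\theta}$ after absorbing the constant. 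I would double-check the exact constant against the stated bound, adjusting the intermediate estimate of $e(G)-e(S,T)$ if needed; this part is routine.

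For the second claim, that $M \setminus P$ is empty, I would argue by contradiction: suppose $v \in M \setminus P$, so $v$ has $d(v) \ge (\tfrac12 - \tfrac1p)n$ but also at least $\theta n$ neighbors on its own side of the cut. Let $\mathbf{x}$ be the Perron eigenvector of $G$ normalized so that $\max_u x_u = 1$, and let $w$ be a vertex achieving the maximum. The idea is to delete all edges from $v$ to its same-side neighbors and instead join $v$ to a set of $\theta n$ high-eigenvector-weight vertices (or simply to make $v$ adjacent to $w$ and a large chunk of $w$'s neighborhood), obtaining a graph $G'$ that is still $W_{2k+1}$-free — one needs to check this, using the Even Circuit Theorem / the remark that $W_{2k+1}$-freeness is equivalent to every neighborhood being $C_{2k}$-free, together with the fact that $|M|$ and $|P|$ are small so the rewired edges can be placed to avoid creating a $C_{2k}$ in any neighborhood. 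Then a Rayleigh-quotient comparison $\mathbf{x}^T A(G')\mathbf{x} - \mathbf{x}^T A(G)\mathbf{x} > 0$ contradicts $G \in \mathrm{SPEX}(n, W_{2k+1})$, since $v$'s neighbors in $S$ contribute at most $\theta n \cdot x_v \cdot (\text{small weight})$ while the new edges contribute strictly more; quantitatively one uses that $\lambda_1 > n/2$ forces eigenvector entries of low-degree/atypical vertices to be small, so swapping edges toward the dense "typical" part strictly helps.

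The main obstacle I expect is the second part: carefully showing that the rewired graph $G'$ remains $W_{2k+1}$-free, and setting up the eigenvector-weight estimates precisely enough that the Rayleigh quotient strictly increases. In particular one must control $x_v$ for $v \in M$ (it cannot be too large, because $v$'s same-side neighborhood is "wasted" relative to what a typical vertex sees), and one must choose the $\theta n$ new neighbors of $v$ inside $T$ (resp.\ $S$) so that $G'[N(u)]$ stays $C_{2k}$-free for every $u$ — here the smallness of $|M \cup P|$ from Lemmas~\ref{lem5.2} and the first half of this lemma is essential, as is the sparsity of the non-cut edges. The first part (the bound $|M| \le 3\epsilon n/\theta$) is a short and essentially bookkeeping argument once the max-cut property is invoked.
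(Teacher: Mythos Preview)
Your first part is essentially the paper's argument, with one slip: to bound $e(G)-e(S,T)$ from above you need an \emph{upper} bound on $e(G)$, not the lower bound from \eqref{first edge lower bound equation}. The paper simply uses $e(G)\le \mathrm{ex}(n,W_{2k+1})\le n^2/4+nk/4$ together with $e(S,T)\ge(1/4-\epsilon)n^2$ to get $e(S)+e(T)\le \tfrac32\epsilon n^2$, and then $\tfrac12\theta n|M|\le e(S)+e(T)$ gives $|M|\le 3\epsilon n/\theta$. So this half just needs the obvious correction.

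The second part, however, is a genuine gap. Your spectral swapping argument needs two ingredients that are not available at this point in the paper: (i) control on eigenvector entries, so that the new cross edges carry strictly more weight than the deleted same-side edges, and (ii) a clean verification that the rewired graph stays $W_{2k+1}$-free. For (i), note that the same-side neighbours of $v$ may themselves be typical high-degree vertices with eigenvector entry close to $1$; nothing yet forces them to be small, and the eigenvector estimate of Lemma~\ref{lem5.6} is proved only \emph{after} this lemma (indeed its proof chain goes through $M\setminus P=\emptyset$). So the Rayleigh-quotient comparison is circular as stated. For (ii), adding $\theta n$ arbitrary cross edges can easily plant a $C_{2k}$ in some neighbourhood; the paper's later swapping argument in Lemma~\ref{lemma5.4} works precisely because it connects $v$ to \emph{all} of $T\setminus P$ and then uses common-neighbour counts, a structure you do not have here.

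The paper avoids all of this with a purely combinatorial argument: take $u\in M\cap S\setminus P$; if $u$ had $k$ neighbours $u_1,\dots,u_k$ in $S\setminus(M\cup P)$, then each $u_i$ has $d_T(u_i)\ge(1/2-1/p-\theta)n$ and $u$ has $d_T(u)\ge d(u)/2\ge(1/4-1/2p)n$ by the max-cut property, so inclusion--exclusion gives $|N_T(u)\cap N_T(u_1)\cap\cdots\cap N_T(u_k)|>k$, producing a $W_{2k+1}$ in $G$. Hence $u$ has at most $k-1$ neighbours in $S\setminus(M\cup P)$, whence $d_S(u)\le |M|+|P|+k-1<\theta n$ for suitable $\epsilon$, contradicting $u\in M$. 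No eigenvector information is used.
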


\begin{proof}
We know from Lemma \ref{lemma first maxcut} that (for $n$ large enough) $G$ has a maximum cut with $e(S, T) \geq \bigg(\dfrac{1}{4} - \epsilon\bigg)n^2$.
Hence, for $k = 2$, \[e(S) + e(T) = e(G) - e(S, T) \leq \frac{n^2}{4} + \frac{n}{2} - \frac{n^2}{4} + \epsilon n^2 \leq \frac{n}{2} + \epsilon n^2,\]
and for $k \geq 3$ and $n$ large enough,
\[e(S) + e(T) = e(G) - e(S, T) \leq \frac{n^2}{4} + \frac{n(k-1)}{4} + \frac{(k-1)^2}{16} + 1 - \frac{n^2}{4} + \epsilon n^2 \leq \frac{3}{2}\epsilon n^2.\]

On the other hand, if we let $M_1 = M \cap S$ and $M_2 = M\cap T$, then
\[2 e(S) = \sum_{u\in S}d_S(u) \geq \sum_{M_1}d_S(u) \geq |M_1|\theta n\]

\[2 e(T) = \sum_{u\in T}d_T(u) \geq \sum_{M_2}d_T(u) \geq |M_2|\theta n\]

So, $e(S) + e(T) \geq \frac{|M|\theta n}{2}$, and hence
$\frac{|M|\theta n}{2} \leq  \frac{3 \epsilon n^2}{2}$. Therefore proving,
$|M| \leq  \frac{3 \epsilon n}{\theta}$.

We now prove that $M \setminus P$ is empty. Let us call $P_1 = P \cap S$, and $P_2 = P\cap T$. Suppose $M\setminus P \neq \emptyset$ . We assume without loss of generality that there exists a vertex $u \in M_1 \setminus P_1$. As $S$ and $T$ form a maximum cut, $d_T(u) \geq \frac{d(u)}{2}$. Also, since $u \not\in P$, it follows that $d(u) \geq  \bigg( \frac{1}{2} - \frac{1}{p} \bigg)n
$. 
Therefore, $d_T(u) \geq  \bigg( \frac{1}{4} - \frac{1}{2p} \bigg)n$. On the other hand $|P| \leq 16pkn^{1/k}$. Hence, for fixed $\epsilon, \theta, p$ and for $n$ large enough, we have

\begin{equation}
\label{eqn25}
  |S\setminus (M \cup P)| \geq \bigg(\dfrac{1}{2} - \sqrt{\epsilon}\bigg)n - \dfrac{3 \epsilon n}{\theta} - 16pkn^{1/k} > k.
\end{equation}

Now suppose that $u$ is adjacent to $k$ distinct vertices $u_1, \ldots, u_k \in S\setminus(M \cup P)$. Since $u_i \not\in P$, we have \[d(u_i) \geq \bigg( \dfrac{1}{2} - \dfrac{1}{p} \bigg)n\]
On the other hand $d_S(u_i) \leq \theta n$. So,
\[d_T(u_i) = d(u_i) - d_S(u_i) \geq \bigg( \dfrac{1}{2} - \dfrac{1}{p} \bigg)n - \theta n\]
By Lemma \ref{inclusion exclusion lemma} we have,
\begin{equation}
\label{eqn26}
\begin{split}
|N_T(u) \cap N_T(u_1) \cap \cdots \cap N_T(u_k)| &\geq |N_T(u)| + |N_T(u_1)|+ \cdots + N_T(u_k)| - k|N_T(u) \cup N_T(u_1) \cup \ldots \cup N_T(u_k)|\\
& \geq \left(\frac{1}{4} - \frac{1}{2p}\right)n + k\left(\frac{1}{2} - \frac{1}{p} - \theta\right)n - k|T|\\
& \geq \bigg(\dfrac{1}{4} - \dfrac{2k + 1}{2p} -k\theta - k\sqrt{\epsilon}\bigg)n\\
& > k,
\end{split}
\end{equation}
where the last inequality holds if we choose $p>20k$, $\theta< \frac{1}{20k}$, $\epsilon < \frac{1}{100k^2}$ and $n$ large enough. This implies that there are at least $k$ distinct vertices $v_1, \ldots, v_k \in T$ such that $\{v_1, \ldots, v_k\} \subseteq |N_T(u) \cap N_T(u_1) \cap \ldots \cap N_T(u_k)|$.
This is a contradiction as G should not contain a $W_{2k+1}$. Therefore, $u$ can be adjacent to at most $k-1$ vertices in $S\setminus (M\cup P)$.
Therefore, 
\begin{equation}
\label{eqn27}
\begin{split}
d_S(u) &\leq |M| + |P| + k-1\\
&\leq \dfrac{3\epsilon n}{\theta} + 16pkn^{1/k} + k-1\\
& < \theta n,
\end{split}
\end{equation}
where the last inequality holds by choosing $\epsilon < \frac{\theta^2}{6}$ and $n$ large enough. This contradicts $u \in M$ and therefore $M_1 \setminus P_1$ must be empty, and hence $M \setminus P = \emptyset$.
\end{proof}

\begin{lemma}
\label{lemma5.4}
The set $P$ is empty. $G[S]$ and $G[T]$ are $K_{1, k}$-free.
\end{lemma}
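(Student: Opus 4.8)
The plan is to show first that $P=\emptyset$ and then, using this, that $G[S]$ and $G[T]$ contain no $K_{1,k}$. For the first part, suppose toward a contradiction that some vertex $v\in P$, say $v\in S$. By Lemma \ref{lemma6} we already know $M\setminus P=\emptyset$, so in particular every vertex of $S\setminus P$ has $d_S(\cdot)<\theta n$, and by Lemma \ref{lem5.2} we have $|P|\leq 16pkn^{1/k}$, which is $o(n)$. The key idea is a local-switching / eigenvector argument: let $\mathbf{x}$ be the Perron eigenvector of $G$ normalized so that $\max_u x_u = 1$, and let $w$ achieve this maximum. Using $\lambda_1 x_w = \sum_{u\sim w}x_u \le d(w)$ together with Lemma \ref{lemma first lower bound quotient matrix} ($\lambda_1 > (n+k-1)/2$), one shows $d(w)\ge \lambda_1 > (n+k-1)/2$, so $w\notin P$; moreover a standard argument shows every entry $x_u$ is bounded below by a positive constant times $1/n$ (or one works directly with the relation $\lambda_1^2 x_u = \sum_{u\sim z}\sum_{z\sim y}x_y$). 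Now delete all edges incident to $v$ and instead join $v$ to a large set of high-$x$-value vertices in the part not containing $w$, chosen to avoid creating a $W_{2k+1}$; since $v$ had degree at most $(\tfrac12-\tfrac1p)n$ and the typical vertices have degree about $n/2$, and since we can pick $v$ to be joined to roughly $(\tfrac12-\sqrt\epsilon)n > d(v)$ vertices of eigenvector-weight close to that of the $T$-side, this strictly increases $\mathbf{x}^TA\mathbf{x}/\mathbf{x}^T\mathbf{x}$ while keeping the graph $W_{2k+1}$-free, contradicting $G\in\mathrm{SPEX}(n,W_{2k+1})$. The care needed is exactly as in Lemma \ref{lemma6}: one must check, via Lemma \ref{inclusion exclusion lemma}, that adding these edges does not produce $k$ common neighbors inducing a $C_{2k}$ in the neighborhood of any vertex, which is where the choices $p>20k$, $\theta<1/(20k)$, $\epsilon<1/(100k^2)$ are used.

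Once $P=\emptyset$, every vertex has degree at least $(\tfrac12-\tfrac1p)n$ and, since $M\setminus P=\emptyset$ and $P=\emptyset$, we get $M=\emptyset$, i.e.\ $d_S(u)<\theta n$ for all $u\in S$ and $d_T(u)<\theta n$ for all $u\in T$. For the $K_{1,k}$-freeness of $G[S]$ (and symmetrically $G[T]$), suppose some $u\in S$ has $k$ neighbors $u_1,\dots,u_k\in S$. Each $u_i$ now satisfies $d(u_i)\ge(\tfrac12-\tfrac1p)n$ and $d_S(u_i)<\theta n$, hence $d_T(u_i)\ge(\tfrac12-\tfrac1p-\theta)n$; likewise $d_T(u)\ge(\tfrac14-\tfrac1{2p})n$ since $S,T$ is a maximum cut and $u\notin P$. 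Applying Lemma \ref{inclusion exclusion lemma} to the sets $N_T(u),N_T(u_1),\dots,N_T(u_k)$ exactly as in \eqref{eqn26} gives
\[
|N_T(u)\cap N_T(u_1)\cap\cdots\cap N_T(u_k)| \ge \Big(\tfrac14 - \tfrac{2k+1}{2p} - k\theta - k\sqrt\epsilon\Big)n > k
\]
for the same parameter choices. Thus $u$ together with $u_1,\dots,u_k$ have $k$ common neighbors $v_1,\dots,v_k$ in $T$; but then $\{u,u_1,\dots,u_k,v_1,\dots,v_k\}$ — with $u$ joined to all $u_i$ and all $v_j$, and each $v_j$ joined to each $u_i$ — contains a vertex ($u$) whose neighborhood spans a $K_{k,k}\supseteq C_{2k}$, producing a $W_{2k+1}$, a contradiction.

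The main obstacle is the first part, the elimination of $P$: Lemma \ref{lemma6} only told us $M\setminus P=\emptyset$, so low-degree vertices of $P$ are still unconstrained and could in principle have large degree into their own side, which blocks the naive common-neighbor count. The fix is the eigenvector/edge-rotation argument sketched above, and the delicate point there is simultaneously (i) verifying the rewiring keeps $G$ $W_{2k+1}$-free and (ii) controlling the eigenvector entries of the newly chosen neighbors well enough that the Rayleigh quotient genuinely increases — this requires a lower bound on Perron weights (e.g.\ $x_u \ge c/n$ for all $u$, obtainable from $\lambda_1^2 x_u=\sum_{z\sim u}\sum_{y\sim z}x_y$ and the fact that all but $o(n)$ vertices have degree $\sim n/2$ with eigenvector weight bounded below). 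Alternatively, and perhaps more cleanly, one can avoid edge-rotation entirely and argue by contradiction on edge counts: a vertex $v\in P$ forces, via the $W_{2k+1}$-free condition localized as in Remark after \eqref{turan number of wheel bound} and the bound $|P|=o(n)$, that $e(G)$ is strictly below $\lambda_1^2 - 6t/n$ is incompatible with Lemma \ref{cor1} and the lower bound on $\lambda_1$ — I would try the rotation argument first since it matches the structure of Lemma \ref{lemma6}.
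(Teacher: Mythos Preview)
Your overall strategy---an eigenvector rotation argument to eliminate $P$, followed by a common-neighbor count to get $K_{1,k}$-freeness---is the right one, and your second part is correct and essentially identical to the paper's. The gap is in the first part, specifically in the eigenvector control.

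The pointwise bound $x_u \ge c/n$ that you propose is too weak. If you rewire $v$ to, say, $(\tfrac12-\sqrt\epsilon)n$ vertices each of weight at least $c/n$, the total gained weight is only $O(1)$, whereas the weight lost from deleting $v$'s old edges can be as large as $d(v)\le(\tfrac12-\tfrac1p)n$ since the entries are bounded by $1$. So the Rayleigh quotient need not increase. What you actually need is a \emph{sum} bound of the form $\sum_{u\in T\setminus P}x_u \ge \lambda_1 - o(n)$, not a pointwise bound.

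The paper obtains this sum bound by reversing your order of operations: it first proves that $G[S\setminus P]$ and $G[T\setminus P]$ are $K_{1,k}$-free (the same common-neighbor argument you give, but applied only to vertices outside $P$, which is already enough since the relevant $u,u_1,\dots,u_k$ are all typical). This yields $d_S(z)\le (k-1)+|P|=O(n^{1/k})$ for the vertex $z$ of maximum entry, and then the eigenvector equation at $z$ gives $\sum_{u\in T\setminus P}x_u \ge \lambda_1 - (k-1) - |P| \ge \lambda_1 - O(n^{1/k})$. With this in hand the rotation argument goes through cleanly: joining $v$ to all of $T\setminus P$ gains at least $\lambda_1 - O(n^{1/k})$ in weight, beating $d(v)\le(\tfrac12-\tfrac1p)n$ by a linear amount.

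Your order could in fact be salvaged without proving $K_{1,k}$-freeness first: since $M\setminus P=\emptyset$ and $z\notin P$, you have $z\notin M$, hence $d_S(z)<\theta n$, which gives $\sum_{u\in T\setminus P}x_u\ge\lambda_1-\theta n-|P|$. This suffices for the rotation provided you choose $\theta<1/p$. But you do not make this observation; you instead appeal to the pointwise $c/n$ bound, which does not work.
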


\begin{proof}
In the proof for Lemma \ref{lemma9} we showed that there are no vertices in $M_1 \setminus P$ (or $M_2 \setminus P$) adjacent to at least $k$ vertices in $S\setminus P$ (or $T\setminus P$, respectively). We will similarly show that $G[S\setminus P]$ and $G[T\setminus P]$ are $K_{1,k}$-free.
Without loss of generality assume to the contrary that there exists a vertex $u \in S\setminus P$ that is adjacent to $k$ distinct vertices $u_1, \ldots, u_k$ in $S \setminus P$. Then 
\begin{equation}
\label{eqn28}
\begin{split}
|N_T(u) \cap N_T(u_1) \cap \ldots \cap N_T(u_k)| &\geq |N_T(u)| + |N_T(u_1)|+ \ldots + |N_T(u_k)| - k|N_T(u) \cup N_T(u_1) \cup \ldots \cup N_T(u_k)|\\
&\geq (k+1)\bigg( \dfrac{1}{2} - \dfrac{1}{p} - \theta\bigg)n - k\bigg(\dfrac{1}{2} + \sqrt{\epsilon}\bigg)n\\
& = \bigg(\dfrac{1}{2} - \dfrac{k+1}{p} - (k+1)\theta - k\sqrt{\epsilon}\bigg)n\\
& > k
\end{split}
\end{equation}
for sufficiently large $n$ and $p$ and sufficiently small $\theta$ and $\epsilon$. This implies that there are at least $k$ distinct vertices $v_1, \ldots, v_k \in T$ such that $\{v_1, \ldots, v_k\} \subseteq |N_T(u) \cap N_T(u_1) \cap \ldots \cap N_T(u_k)|$.
This is a contradiction as $G$ should not contain a $W_{2k + 1}$. Therefore, $u$ can be adjacent to at most $k-1$ vertices in $S\setminus P$. This implies that $G[S\setminus P]$ is $K_{1,k}$-free and similarly $G[T\setminus P]$ is $K_{1,k}$-free. 

Next, let $z$ be a vertex of G with largest eigenvector entry. By possible rescaling we may assume that $x_z = 1$.
Therefore, \begin{equation}
    \label{eqn29}
    \begin{split}
        d(z) \geq \sum_{v \sim z}x_v = \lambda_1 x_z = \lambda_1 > \frac{n+k-1}{2},
    \end{split}
\end{equation}
and hence $z \not\in P$. Assume without loss of generality then, that $z \in S$.
Also, since $G(S\setminus P)$ is $K_{1, k}$ free, 
\[d_S(z) = d_{S\setminus P}(z) + d_{S\cap P}(z) \leq k-1 + |S\cap P|\]

So,
\begin{equation}
    \label{eqn30}
    \begin{split}
 \lambda_1 &= \lambda_1 \mathbf{x}_z = \sum_{v \sim z}\mathbf{x}_v \\
 &= \sum_{\substack{v \sim z \\ v \in S}} \mathbf{x}_v +  \sum_{\substack{v \sim z \\ v \in T}}\mathbf{x}_v \\
 &= \sum_{\substack{v \sim z \\ v \in S}}\mathbf{x}_v + \sum_{\substack{v \sim z \\ v \in P_2}}\mathbf{x}_v + \sum_{\substack{v \sim z \\ v \in T\setminus P_2}}\mathbf{x}_v \\
 & \leq d_S(z) + |P_2| + \sum_{v \in T\setminus P} \mathbf{x}_v\\
 & \leq k-1 + |S\cap P| + |T\cap P| + \sum_{v \in T\setminus P} \mathbf{x}_v\\
 & \leq k-1 + 16pkn^{1/k} + \sum_{v \in T\setminus P} \mathbf{x}_v.\\
    \end{split}
\end{equation}
Therefore,
\begin{equation}
    \label{eqn31}
    \sum_{v \in T\setminus P} x_v \geq \lambda_1 - 16pkn^{1/k} - k+1
\end{equation}

Now to show $P = \emptyset$, first assume to the contrary that there exists some vertex $v \in P$ with $d(v) \leq \bigg( \dfrac{1}{2} - \dfrac{1}{p} \bigg)n$. Then consider the modified graph, $G^+$ with vertex set $V(G)$ and edge set $E(G^+) = E(G \setminus \{v\}) \cup \{vw : w \in T\setminus P\}$. That is, effectively we are deleting the vertex $v$ and replacing it with another vertex that is adjacent to all the vertices in the set $T\setminus P$. This modification of $G$ to $G^+$ preserves the property of being $W_{2k+1}$-free. If a wheel, $W_{2k+1}$, would be created after the modification, then either (i) $v$ is the centre of the wheel, or (ii) $v$ is in the cycle part of the wheel. In the first case if $v$ were the centre of a wheel, then it would have $2k$ neighbours in $T\setminus P$ that induce a cycle. We can show, by choosing $p$ sufficiently large, that the $2k$ vertices would already have had a common neighbour in $S$ in this case, and therefore such a case would not be possible to begin with. 
On the other hand, in the second case, if $v \in N(c)$ where $c$ denotes the centre of the wheel created. 
Then $v$ must be adjacent to at least two other vertices $c_1$ and $c_2$ in $T \setminus P$. Again choosing $p$ large enough shows that $|N_S(c_1) \cap N_S(c_2) \cap N_S(c)| \geq 2k - 2$, and therefore, if the modification contained a $W_{2k+1}$ then $G$ itself would have contained a $W_{2k+1}$, which is a contradiction. Thus, $G^+$ is $W_{2k+1}$-free.

Now, using equation (\ref{eqn31}) we can say that
\begin{equation}
    \begin{split}
        \lambda_1(G^+) - \lambda_1(G) &\geq \dfrac{\mathbf{x}^T (A(G^+) - A(G)) \mathbf{x}}{\mathbf{x}^T\mathbf{x}} = \dfrac{2 \mathbf{x}_v}{\mathbf{x}^T\mathbf{x}}\bigg(\sum_{w \in T\setminus P} x_w - \sum_{vw \in E(G)} x_w\bigg)\\
        & \geq \dfrac{2 \mathbf{x}_v}{\mathbf{x}^T\mathbf{x}} \bigg(\lambda_1 -16pkn^{1/k} - k+1 - d_G(v)\bigg)\\
        & > \dfrac{2 \mathbf{x}_v}{\mathbf{x}^T\mathbf{x}} \bigg(\dfrac{n+k-1}{2} - 16pkn^{1/k} - k+1 - \bigg( \dfrac{1}{2} - \dfrac{1}{p} \bigg)n\bigg)\\
        & = \dfrac{2 \mathbf{x}_v}{\mathbf{x}^T\mathbf{x}}\bigg(\dfrac{n}{p} -16pkn^{1/k} - \frac{k}{2} + \frac{1}{2}\bigg)\\
        & > 0
    \end{split}
\end{equation}
for $n$ large enough. This contradicts the fact that $G$ has maximum spectral radius over all $W_{2k+1}$-free graphs. Hence, the set of atypical vertices, $P$, must be empty. Moreover, it follows from here that $G[S\setminus P]= G[S]$ and $G[T\setminus P] = G[P]$ are $K_{1,k}$-free.

\end{proof}

\begin{lemma}
\label{lemma5.5}
We have the following bounds on the sizes of $|S|$ and $|T|$.

\[\frac{n}{2} - \sqrt{\frac{3nk}{2}} \leq |S|, |T| \leq \frac{n}{2}+\sqrt{\frac{3nk}{2}}.\]
\end{lemma}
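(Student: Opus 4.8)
The plan is to pin down $|S|$ and $|T|$ by sandwiching the product $|S|\,|T|$: on one side the elementary identity $|S|\,|T| = \frac{n^2}{4} - \big(\frac{|S|-|T|}{2}\big)^2$, and on the other a lower bound forced by the fact that $\lambda_1(G)$ is large (Lemma~\ref{lemma first lower bound quotient matrix}). The only real work is to convert ``$\lambda_1(G)$ large'' into a constraint on $|S|,|T|$ alone, and the key observation that makes this possible is that, after Lemma~\ref{lemma5.4}, the two parts induce bounded-degree subgraphs, so $G$ is squeezed inside a join of two sparse graphs.

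Concretely, I would first record that by Lemma~\ref{lemma5.4} both $G[S]$ and $G[T]$ are $K_{1,k}$-free, which means $\Delta(G[S]) \le k-1$ and $\Delta(G[T]) \le k-1$. Since every $S$–$T$ edge is present in the join $G[S] + G[T]$, we have $G \subseteq G[S] + G[T]$, and hence $\lambda_1(G) \le \lambda_1(G[S]+G[T])$. Applying Lemma~\ref{thm5} with $H_1 = G[S]$, $H_2 = G[T]$, $n_0 = |S|$, and $d = d' = k-1$ gives
\[
\lambda_1(G) \le \lambda_1(B), \qquad B = \begin{bmatrix} k-1 & |T| \\ |S| & k-1 \end{bmatrix},
\]
and a direct computation of the characteristic polynomial of $B$ yields $\lambda_1(B) = (k-1) + \sqrt{|S|\,|T|}$. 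Combining this with $\lambda_1(G) > \frac{n+k-1}{2}$ from Lemma~\ref{lemma first lower bound quotient matrix}, we get $\sqrt{|S|\,|T|} > \frac{n-k+1}{2}$, that is, $|S|\,|T| > \frac{(n-k+1)^2}{4}$.

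To conclude, assume without loss of generality that $|S| \ge |T|$, so $|S| \ge n/2 \ge |T|$. From $|S|+|T|=n$ we have $|S|\,|T| = \frac{n^2}{4} - \big(\frac{|S|-|T|}{2}\big)^2$, so the previous inequality rearranges to
\[
\left(\frac{|S|-|T|}{2}\right)^{2} < \frac{n^2 - (n-k+1)^2}{4} = \frac{(2n-k+1)(k-1)}{4} < \frac{nk}{2}.
\]
Therefore $\frac{|S|-|T|}{2} < \sqrt{nk/2} \le \sqrt{3nk/2}$, which gives $|S| = \frac n2 + \frac{|S|-|T|}{2} < \frac n2 + \sqrt{3nk/2}$ and $|T| = \frac n2 - \frac{|S|-|T|}{2} > \frac n2 - \sqrt{3nk/2}$; together with the trivial bounds $|T| \le \frac n2 \le \frac n2 + \sqrt{3nk/2}$ and $|S| \ge \frac n2 \ge \frac n2 - \sqrt{3nk/2}$, this is exactly the claimed estimate.

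The step I expect to be the only delicate one is obtaining a strong enough upper bound on $\lambda_1(G)$: a naive route through edge counting, bounding $e(G) \le |S|\,|T| + e(G[S]) + e(G[T])$ and then invoking Lemma~\ref{cor1}, introduces an error of order $n^{1+1/k}$ via the $6t/n$ term, which is far too large to force $|S|,|T| = \frac n2 + O(\sqrt n)$. Exploiting the structural fact $\Delta(G[S]),\Delta(G[T]) \le k-1$ together with the comparison Lemma~\ref{thm5} is what makes the bound tight enough; everything else is routine.
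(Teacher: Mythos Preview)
Your argument is correct and is in fact cleaner than the paper's. The paper proves Lemma~\ref{lemma5.5} by edge counting: it applies Lemma~\ref{cor1} in the form $e(G) > \lambda_1^2 - 6t/n$, then uses the $K_{1,k}$-freeness of $G[S]$ and $G[T]$ from Lemma~\ref{lemma5.4} to sharpen the triangle count $t$ to order $n^2$ (rather than the crude $n^{2+1/k}$), obtaining a lower bound on $e(G)$ good enough to compare against the upper bound $e(G)\le \tfrac{n(k-1)}{2} + |S|\,|T|$. Your route bypasses edges and triangles entirely: you feed the structural fact $\Delta(G[S]),\Delta(G[T])\le k-1$ directly into the spectral comparison Lemma~\ref{thm5}, obtaining $\lambda_1(G)\le (k-1)+\sqrt{|S|\,|T|}$, and then play this against $\lambda_1(G)>\tfrac{n+k-1}{2}$. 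This is shorter, and even yields the slightly better $q<\sqrt{nk/2}$ rather than $q<\sqrt{3nk/2}$. Both approaches ultimately hinge on Lemma~\ref{lemma5.4}; the paper's version has the minor advantage of exhibiting an explicit lower bound on $e(G)$ that could be reused elsewhere, but for the purpose of this lemma your argument is the more efficient one.
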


\begin{proof}
We know from Lemma \ref{cor1} that $e(G) \geq \lambda_1^2 - \frac{6t}{n}$. Using Lemma \ref{lemma5.4} we may obtain an improved upper bound on the number of triangles in G, to obtain a lower bound on $e(G)$.
\begin{equation}
    \begin{split}
       t & \leq \frac{n_0(k-1)}{2}\bigg(\frac{k-2}{3}+ (n-n_0)\bigg) + \frac{(n-n_0)(k-1)}{2}\bigg(\frac{k-2}{3}+ n_0\bigg)\\
       & = \frac{n(k-1)(k-2)}{6} +  \bigg(\frac{n^2}{4} - q^2\bigg)(k-1)
    \end{split}
\end{equation}
where $n_0 = \frac{n}{2} + q$ is the size of the larger part. The above upper bound may be obtained by observing that any triangle in $G$ contains an edge in either $S$ or $T$ and two more edges, which either lie in the same part or in $E(S,T)$. Note that by Lemma \ref{lemma5.4} the vertices of any edge is $S$ or $T$ have at most $k-2$ common neighbours and any triangle lying entirely in one of the parts would be counted thrice depending on which of the three edges we chose to begin with initially. 

We also have from \eqref{lower bound G} that $\lambda_1 >  \frac{n+k-1}{2}$ for $n$ large enough.

Therefore, 
\begin{equation}
    \begin{split}
      e(G) & > \bigg(\frac{n}{2} + \frac{k-1}{2}\bigg)^2 - \frac{6}{n}\bigg(\frac{n(k-1)(k-2)}{6} +  \bigg(\frac{n^2}{4} - q^2\bigg)(k-1)\bigg)\\
       & = \frac{n^2}{4} + \frac{n(k-1)}{2} +  \frac{k^2 - 2k + 1}{4} - \frac{3n(k-1)}{2} - (k-1)(k-2) + \frac{6q^2(k-1)}{n}\\
       & \geq \frac{n^2}{4} - n(k - 1) - \frac{3k^2 - 10k + 7}{4} 
    \end{split}
\end{equation}
On the other hand, $e(G) = e(S) + e(T) + e(S, T) \leq \frac{n(k-1)}{2} + \frac{n^2}{4} - q^2$. So, 
\[\frac{n(k-1)}{2} + \frac{n^2}{4} - q^2  \geq \frac{n^2}{4} - n(k - 1) - \frac{3k^2 - 10k + 7}{4} \]
and therefore,
\[\frac{3n(k-1)}{2} + \frac{3k^2 - 10k + 7}{4} \geq q^2,\]
\[\frac{3nk}{2} > q^2\] for $n$ large enough.
Thus giving $q < \sqrt{\frac{3nk}{2}}$. 

\end{proof}

We now show that all of the eigenvector entries are close to the maximum. 
\begin{lemma}
\label{lem5.6}
For all $u \in V(G)$ and $0 < \epsilon' < 1$, we have $x_u > 1 - \epsilon'$.
\end{lemma}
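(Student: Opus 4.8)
The plan is to reuse the vertex–replacement operation from the proofs of Lemmas~\ref{lemma5.4} and \ref{lemma5.5}, but to measure the gain in spectral radius through the eigenvector entry $x_u$ rather than through the degree $d(u)$. Fix a Perron eigenvector $\mathbf{x}$ of $A(G)$ scaled so that its maximum entry is $1$, and let $z$ be a vertex with $x_z=1$; say $z\in S$. Since $P=\emptyset$, every vertex has degree more than $\left(\frac12-\frac1p\right)n$; together with $|S|,|T|\le \frac n2+\sqrt{3nk/2}$ and $e(S,T)\ge\left(\frac14-\epsilon\right)n^2$ this forces $G$ to be connected (any two vertices have a common neighbour, or lie on opposite sides of the cut and are joined by a path of length at most $3$ through it), so $\mathbf{x}>0$ entrywise.

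The first step is to bound the eigenvector mass of each side of the cut from below. Because $G[S]$ is $K_{1,k}$–free (Lemma~\ref{lemma5.4}), $d_S(z)\le k-1$, and, since $P=\emptyset$, the derivation of \eqref{eqn31} gives
\[
\sum_{v\in T}x_v\;\ge\;\lambda_1 x_z-d_S(z)\;=\;\lambda_1-d_S(z)\;\ge\;\lambda_1-(k-1).
\]
Dividing by $|T|\le\frac n2+\sqrt{3nk/2}$ and using $\lambda_1>\frac{n+k-1}{2}$ shows that some $w^\ast\in T$ has $x_{w^\ast}\ge 1-O(\sqrt{k/n})$; feeding this into $\lambda_1 x_{w^\ast}\le d_T(w^\ast)+\sum_{v\in S}x_v$ with $d_T(w^\ast)\le k-1$ gives $\sum_{v\in S}x_v\ge\lambda_1-O(\sqrt{nk})$. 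Hence both side–sums are within $O(\sqrt{nk})$ of $\lambda_1$.

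The main step is then a contradiction argument. Suppose $x_u\le 1-\epsilon'$ for some $u$; assume $u\in S$, the case $u\in T$ being symmetric (with $S$ in the role of $T$). Let $G^+$ be obtained from $G$ by deleting every edge at $u$ and joining $u$ to all of $T$. The verification that $G^+$ is still $W_{2k+1}$–free is word for word the one in the proof of Lemma~\ref{lemma5.4}: it uses only that $G[T]$ has no $C_{2k}$ and that the relevant collections of at most $2k$ vertices of $T$ have more than $2k-2$ common neighbours in $S$ — facts that follow from $P=\emptyset$, from $|S|\le\frac n2+\sqrt{3nk/2}$, and from $p$ being large, none of which involves $u$. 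Comparing Rayleigh quotients with the eigenvector $\mathbf{x}$ of $G$,
\begin{align*}
\lambda_1(G^+)-\lambda_1(G)
&\ge\frac{\mathbf{x}^{T}\bigl(A(G^+)-A(G)\bigr)\mathbf{x}}{\mathbf{x}^{T}\mathbf{x}}
=\frac{2x_u}{\mathbf{x}^{T}\mathbf{x}}\Bigl(\sum_{v\in T}x_v-\lambda_1 x_u\Bigr)\\
&\ge\frac{2x_u}{\mathbf{x}^{T}\mathbf{x}}\bigl(\lambda_1(1-x_u)-(k-1)\bigr)
\;\ge\;\frac{2x_u}{\mathbf{x}^{T}\mathbf{x}}\bigl(\lambda_1\epsilon'-(k-1)\bigr),
\end{align*}
which is strictly positive once $n>2(k-1)/\epsilon'$, since $\lambda_1>n/2$ and $x_u>0$. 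As $G^+$ is $W_{2k+1}$–free on $n$ vertices, this contradicts $G\in\mathrm{SPEX}(n,W_{2k+1})$. (For $u\in T$ the same computation yields $\lambda_1(G^+)-\lambda_1(G)\ge\frac{2x_u}{\mathbf{x}^{T}\mathbf{x}}\bigl(\lambda_1\epsilon'-O(\sqrt{nk})\bigr)>0$ for $n$ large.) Therefore $x_u>1-\epsilon'$ for every $u\in V(G)$.

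The only part that needs care is the claim that $G^+$ remains $W_{2k+1}$–free, and this is essentially already established inside Lemma~\ref{lemma5.4}; the point worth stressing is that the argument there never used that the replaced vertex had small degree, so it applies verbatim to an arbitrary $u$. Everything else is a short computation, and the threshold on $n$ is permitted to depend on $\epsilon'$, just as the other lemmas in this section depend on $\epsilon,p,\theta$.
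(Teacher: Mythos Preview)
Your proof is correct but takes a genuinely different route from the paper's.

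The paper argues directly: with $z\in S$ and $x_z=1$, the eigenweight in $N_T(z)$ is at least $\lambda_1-(k-1)$; for an arbitrary $z'\in S$ the eigenweight in $N_T(z)\cap N_T(z')$ is then at least $\lambda_1-(k-1)-(|T|-d_T(z'))$, which (using $|T|\le\frac{n}{2}+\sqrt{3nk/2}$ and $d_T(z')\ge\frac{n}{2}-\frac{n}{p}-(k-1)$) forces $\lambda_1 x_{z'}\ge \frac{n}{2}-\frac{n}{p}-\sqrt{2nk}$ and hence $x_{z'}>1-\frac{\epsilon'}{2}$. The bound for $w\in T$ then follows in a second step, using that $w$ has at least $\frac{n}{2}-\frac{n}{p}-(k-1)$ neighbours in $S$, each with entry exceeding $1-\frac{\epsilon'}{2}$.

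Your argument instead recycles the vertex–replacement operation of Lemma~\ref{lemma5.4}: the identity $\sum_{w\sim u}x_w=\lambda_1 x_u$ converts ``$x_u$ small'' into ``the eigenweight in $N(u)$ is small'', and since the eigenweight on the opposite side is close to $\lambda_1$, rewiring $u$ to that side strictly increases the Rayleigh quotient. You correctly note that the $W_{2k+1}$-freeness check in Lemma~\ref{lemma5.4} never used the low degree of the replaced vertex, and with $P=\emptyset$ already in hand it goes through cleanly (indeed more cleanly than in the original context). The only extra ingredient you need is the lower bound on $\sum_{v\in S}x_v$ for the case $u\in T$, which you obtain by a short averaging step through a vertex $w^\ast\in T$ of near-maximal entry.

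Both approaches are short; the paper's is slightly more self-contained and yields explicit bounds at each stage, while yours is more conceptual and unifies the treatment of small-degree and small-entry vertices under a single Rayleigh-quotient comparison.
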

\begin{proof}
Recall that the vertex $z$ is the vertex with largest eigenvector entry, $x_z = 1$. Without loss of generality, assume that $z \in S$. Since $d(z) \geq \lambda_1 > \frac{n+k-1}{2}$, and $d(z) = d_T(z) + d_S(z)$, then $d_T(z) > \frac{n+k-1}{2} - (k-1) = \frac{n-k+1}{2}$. This implies that $|T| >\frac{n-k+1}{2}$ and $|S| < \frac{n+k-1}{2}$. Since $d_S(z) \leq k-1$, the amount of eigenweight in neighbourhood of $z$ lying in $T$ is greater than or equal to $\lambda_1 - (k-1) > \frac{n-k+1}{2}$. 

Let $z'$ be an arbitrary vertex of $S$. Then by lemma \ref{lemma5.4}, $d_T(z') \geq \frac{n}{2} - \frac{n}{p} - (k-1)$.
We lower bound the amount of eigenweight lying in the common neighbourhood of $z$ and $z'$ by noting that an upper bound for the eigenweight of vertices in $N_T(z)$ but not in $N_T(z')$ is given by $|T| - d_T(z')$.
Therefore,
\begin{equation}
    \begin{split}
    \text{Eigenweight in } N_T(z) \cap N_T(z')  &= \sum_{u\in N_T(z) \cap N_T(z')} \mathbf{x}_u \\ & \geq \left(\sum_{u\in N_T(z)} \mathbf{x}_u \right) - (|T| - d_T(z'))\\
    &> \left(\frac{n-k+1}{2}\right) - \left(|T| - d_T(z')\right)\\
    &\geq \left(\frac{n-k+1}{2}\right) - \left( \left(\frac{n}{2} + \sqrt{\frac{3nk}{2}}\right) - \left(\frac{n}{2} - \frac{n}{p} - (k-1)\right)\right)\\
    & = \frac{n-3k+3}{2}  - \frac{n}{p} - \sqrt{\frac{3nk}{2}} \\
    & \geq \frac{n}{2} - \frac{n}{p} - \sqrt{2nk},
    \end{split}
\end{equation}
for $n$ large enough. Therefore, 
\begin{equation}
 \lambda_1 \mathbf{x}_{z'}  = \sum_{u\sim z'}\mathbf{x}_u
                   \geq\frac{n}{2} - \frac{n}{p} - \sqrt{2nk}.
\end{equation}
Since $G$ is a subgraph of the union of a complete bipartite graph and a graph of maximum degree $k-1$, we have the upper bound, $\lambda_1 \leq \frac{n}{2} + k-1$. Therefore,
\begin{equation}
    \begin{split}
 \mathbf{x}_{z'} & \geq \frac{\frac{n}{2} - \frac{n}{p} - \sqrt{2nk}}{\lambda_1}\\
        & \geq  \frac{\frac{n}{2} - \frac{n}{p} - \sqrt{2nk}}{n\bigg(\frac{1}{2} + \frac{k-1}{n}\bigg)}.
    \end{split}
\end{equation}
For $n$ and $p$ large enough, this gives us $\mathbf{x}_{z'} > 1 - \frac{\epsilon'}{2}$.

Similarly, let $w$ be an arbitrary vertex in $T$. Then $d_S(w) \geq \frac{n}{2}  - \frac{n}{p} - (k-1)$. Now, since every vertex in $S$ has eigenvector entry greater than $1 - \frac{\epsilon'}{2}$, it implies that \begin{equation}
    \begin{split}
 \mathbf{x}_{w} & \geq \frac{\left(\frac{n}{2} - \frac{n}{p} - (k-1)\right)(1 - \frac{\epsilon'}{2})}{\frac{n}{2} + k-1}\\
        & \geq  \frac{n\left(\frac{1}{2} - \frac{1}{p} - \frac{k-1}{n}\right)(1 - \frac{\epsilon'}{2}) }{n\bigg(\frac{1}{2} + \frac{k-1}{n}\bigg)}\\
       > 1 - \epsilon'
    \end{split}
\end{equation}
for $n$ and $p$ large enough.
\end{proof}

\section{The proof of Theorem \ref{thm2}}\label{section k2}

To prove Theorem \ref{thm2}, assume that $G$ is a graph in $\mathrm{SPEX}(n, W_5)$, and that $n$ is large enough. We will show that $e(G) = \textup{ex}(n, W_5)$. Let $S$ and $T$ be the two parts of a maximum cut of $G$ as in Section \ref{structural lemmas section}. By Lemma \ref{lemma5.4} we know that the two induced graphs, $G[S]$ and $G[T]$ are matchings. If one could increase the size of the matching in $G[S]$ or $G[T]$, this would not create any $W_5$ and would strictly increase the spectral radius. Therefore, $G$ must have that $G[S]$ and $G[T]$ are matchings of size $\lfloor \frac{|S|}{2}\rfloor$ and $\lfloor\frac{|T|}{2}\rfloor$ respectively. Similarly, we must have that 
\[E(S,T) = \bigg\{\{u, v\}  \text{ for all }u \in S, v \in T\bigg\}.\]
This is again because adding more edges to $E(S,T)$, if possible, will not create a $W_5$, but strictly increases the spectral radius.
Say $|S| \leq |T|$ and let $|S| = \frac{n}{2}-q$ and $|T| = \frac{n}{2} + q$. We will now argue that $q\leq 1$ and so $|T| \leq |S| + 2$. For this we will use lower bounds on $\lambda_1(G)$, obtained similarly to how they were found in equation (\ref{eqn1}), and upper bounds on $\lambda_1(G)$. Let $H$ be any graph in $\mathrm{EX}(n, W_5)$. 

We break this argument into two cases based on when $n \equiv 0 \pmod 4$ and when $n \not\equiv 0 \pmod 4$.

\begin{case}[\textbf{When $n \equiv 0 \pmod 4$}]
\begin{equation}
\label{eqn15a}
    \lambda_1(G) \geq \lambda_1(H) \geq \dfrac{\mathbf{1}^T A(H)\mathbf{1}}{\mathbf{1^T 1}} = 2\dfrac{\bigg\lfloor\dfrac{n^2}{4}\bigg\rfloor + \bigg\lfloor\dfrac{n}{2} \bigg\rfloor}{n} = 2\dfrac{\bigg(\dfrac{n^2}{4}+\dfrac{n}{2}\bigg)}{n} = \dfrac{n}{2} + 1.
\end{equation}
On the other hand, 
\begin{equation}
\label{eqn15}
    \lambda_1(G) \leq \sqrt{|S||T|} + 1 \leq \sqrt{\bigg(\dfrac{n}{2} - q\bigg)\bigg(\dfrac{n}{2} + q\bigg)} + 1 = \sqrt{\bigg(\dfrac{n^2}{4} - q^2\bigg)} + 1
\end{equation} where $q \in \mathbb{N}\cup \{0\}$.
So we have,
$\dfrac{n}{2} + 1 \leq \lambda_1(G) \leq \sqrt{\bigg(\dfrac{n^2}{4} - q^2\bigg)} + 1$. Therefore $q = 0$, meaning $|S| = |T|$.
\end{case}
\begin{case}[\textbf{When $n \not\equiv 0 \pmod 4$}]
\begin{equation}
\label{eqn15b}
   \lambda_1(G) \geq \lambda_1(H) \geq \dfrac{\mathbf{1}^T A(H)\mathbf{1}}{\mathbf{1^T 1}} \geq 2\dfrac{\bigg(\dfrac{n^2}{4}+\dfrac{n}{2} - 1\bigg)}{n} = \dfrac{n}{2} + 1 - \dfrac{2}{n}.
\end{equation}
On the other hand, 
\begin{equation}
\label{eqn15''}
    \lambda_1(G) \leq \sqrt{|S||T|} + 1 \leq \sqrt{\bigg(\dfrac{n}{2} - q\bigg)\bigg(\dfrac{n}{2} + q\bigg)} + 1 = \sqrt{\bigg(\dfrac{n^2}{4} - q^2\bigg)} + 1
\end{equation} where $2q \in \mathbb{N}\cup \{0\}$.
So we have,
$\dfrac{n}{2} + 1 - \dfrac{2}{n} \leq \lambda_1(G) \leq \sqrt{\bigg(\dfrac{n^2}{4} - q^2\bigg)} + 1$. Which gives $\dfrac{n^2}{4} - 2 + \dfrac{4}{n^2} \leq \dfrac{n^2}{4} - q^2$, meaning $q^2 \leq 2 - \dfrac{4}{n^2}$, and so $q < \sqrt{2}$.
This implies that when $n$ is odd, then $q = 0.5$ and we obtain that $|T| = |S|  + 1$; and when $n \equiv 2 \pmod 4$, then $q \leq 1$ and $|T| \leq |S|  + 2$. 
\end{case}
In fact, for $n \equiv 2 \pmod 4$,  we may explicitly calculate the largest eigenvalues of the cases when $|T| = |S|$ (balanced case), and $|T| = |S| + 2$ (unbalanced case) and observe that the spectral radius is maximized in the unbalanced case. One may observe this by calculating the spectral radius of their respective equitable matrices, \[B = \begin{bmatrix}
\frac{n}{2} & 1 \\
\frac{n}{2} - 1 & 1
\end{bmatrix} \text{and } U = \begin{bmatrix}
1 & \frac{n}{2} - 1 \\
\frac{n}{2} + 1 & 1
\end{bmatrix},\] where the two parts of the balanced case ($B$) are the set of vertices that have internal degree $1$, and the set of vertices that have internal degree $0$;  and the two parts of the unbalanced case ($U$) are the sets $S$ and $T$.

In all cases, for large enough $n$, $\mathrm{SPEX}(n, W_5) \subseteq \mathrm{EX}(n, W_5)$ , thus proving Theorem \ref{thm2}.

\section{The proof of Theorem \ref{thm4}}\label{section k3}

In this section, we assume that $k\geq 3$ and that $G \in \mathrm{SPEX}(n, W_{2k+1})$. We recall the notation from previous sections. $S$ and $T$  denote the two parts in the maximum cut of $G$. Define the {\em internal degree} of a vertex $u$ to be $d_S(u)$ if $u\in S$ or $d_T(u)$ if $u\in T$. By Lemma \ref{lemma5.4} every vertex has internal degree at most $k-1$ and degree at least $\left(\frac{1}{2}-\frac{1}{p}\right)n$ where we may choose $p$ to be a constant large enough for our needs. We have the following lemma related to the set of edges, $E(S, T)$.
\begin{lemma}
\label{lem6}
Every vertex, $u$ in $S$ (or $T$), with `internal degree' $0 \leq d \leq k-1$, is adjacent to all but at most $d$ vertices in $T$ (or $S$ respectively.) 
\end{lemma}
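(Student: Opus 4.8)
The plan is to argue by contradiction using the eigenvector and the $W_{2k+1}$-free condition, in the same spirit as the modification arguments in Lemmas \ref{lemma5.4} and \ref{lemma9}. Suppose $u\in S$ has internal degree $d\le k-1$ but fails to be adjacent to at least $d+1$ vertices of $T$; that is, $|T\setminus N(u)|\ge d+1$. I would first record the baseline facts already available: by Lemma \ref{lemma5.4} the set $P$ is empty and $G[S],G[T]$ are $K_{1,k}$-free, so every vertex of $S$ has internal degree at most $k-1$; by Lemma \ref{lemma5.5} we have $|S|,|T|=\tfrac n2+O(\sqrt{nk})$; and by Lemma \ref{lem5.6} every eigenvector entry satisfies $x_v>1-\epsilon'$ for a small constant $\epsilon'$ of our choosing, while $x_z=1$ is the maximum. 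The key structural point is that, since $G[T]$ is $K_{1,k}$-free and every vertex of $T$ has degree $\ge(\tfrac12-\tfrac1p)n$, a large common neighborhood of a bounded set of vertices in $S$ must already exist; this is exactly the inclusion–exclusion computation from \eqref{eqn28}, which I would reuse.

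Next I would perform the edge-swap. Let $w_1,\dots,w_{d+1}$ be distinct vertices of $T$ not adjacent to $u$, and delete the (at most $d$) internal edges at $u$ inside $S$, adding instead the edges $uw_1,\dots,uw_{d+1}$. Call the new graph $G'$. Since $x_u>1-\epsilon'$ and all $x_{w_i}>1-\epsilon'$ while the deleted neighbors also have entries $<1$, the Rayleigh quotient computation
\[
\lambda_1(G')-\lambda_1(G)\ \ge\ \frac{2x_u}{\mathbf{x}^T\mathbf{x}}\Big(\sum_{i=1}^{d+1}x_{w_i}-\sum_{\substack{uv\in E(G)\\ v\in S}}x_v\Big)\ \ge\ \frac{2x_u}{\mathbf{x}^T\mathbf{x}}\big((d+1)(1-\epsilon')-d\big)\ >\ 0
\]
shows $\lambda_1$ strictly increases, provided $\epsilon'$ is chosen small enough (say $\epsilon'<\tfrac{1}{k}$). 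The remaining and main obstacle is to verify that $G'$ is still $W_{2k+1}$-free, contradicting $G\in\mathrm{SPEX}(n,W_{2k+1})$. Here I would argue as in the last paragraph of Lemma \ref{lemma5.4}: any new copy of $W_{2k+1}$ must use a new edge $uw_i$, so $u$ is either the hub or on the rim. If $u$ is the hub, it has $2k$ neighbors spanning a $C_{2k}$; all but $d\le k-1$ of these lie in $T$, and by the inclusion–exclusion bound \eqref{eqn28}-style estimate these $T$-vertices already had a common neighbor in $S$, so the wheel existed in $G$. If $u$ lies on the rim with hub $c$, then $u$ has two rim-neighbors $c_1,c_2$; at least one of $c_1,c_2,c$ forces us into $T$, and again a common neighborhood computation of size $\ge 2k-2$ in $S$ reconstructs the forbidden wheel inside $G$. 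In both subcases the point is that the vertices of $T$ involved have degree $\ge(\tfrac12-\tfrac1p)n$ and bounded internal degree, so $p$ large and $n$ large make the relevant intersections exceed $2k$.

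Thus in every case $G'$ is $W_{2k+1}$-free with strictly larger spectral radius, contradicting the extremality of $G$; hence every vertex of internal degree $d$ is adjacent to all but at most $d$ vertices of the other side. The symmetric statement for $T$ follows identically by swapping the roles of $S$ and $T$. The only delicate points are bookkeeping the constants $\epsilon',p$ so that both the Rayleigh quotient gain is positive and all the common-neighborhood intersections exceed $2k$; these are the same constraints already imposed in Lemmas \ref{lemma9} and \ref{lemma5.4}, so no new constant juggling is needed.
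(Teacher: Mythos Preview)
Your proposal is correct and follows essentially the same approach as the paper: delete the internal edges at $u$, add cross-edges to $T$, verify the resulting graph is still $W_{2k+1}$-free via the same two-case (hub/rim) common-neighborhood argument, and compare spectral radii via the Rayleigh quotient using Lemma~\ref{lem5.6}. The only cosmetic difference is that the paper adds \emph{all} missing edges from $u$ to $T$ and uses $\lambda_1(G)\ge\lambda_1(G')$ to directly bound $|W|\le d/(1-\epsilon)<d+1$, whereas you set up a contradiction by adding only $d+1$ cross-edges; these are equivalent packagings of the same idea. Two minor inaccuracies worth cleaning up: after you delete all internal edges, $u$ has \emph{no} $G'$-neighbors in $S$, so in the hub case all $2k$ rim vertices lie in $T$ (not ``all but $d$''), and in the rim case all three of $c,c_1,c_2$ lie in $T$ (not ``at least one''); both slips only make your argument easier.
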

\begin{proof}

The proof is same for $u \in S$ as $u \in T$, so we will prove it in the case $u$ is an arbitrary vertex in $S$ only. Let $u$ have internal degree $d$ and by Lemma \ref{lemma5.4} we have $0 \leq d \leq k-1$. Let $\{u_1, u_2, \ldots, u_d\}$ be its neighbors in $G[S]$. If we now modify the graph $G$ to $G'$ by deleting the edges $\{u, u_i\}$ for all $1 \leq i \leq d$, and adding edges until $u$ is adjacent to all vertices in $T$, then we claim that $G'$ still does not have any $W_{2k+1}$.

$G'$ has no $W_{2k+1}$ because if $u$ were the `center' of a new cycle, then that would imply that $G[T]$ has $C_{2k}$ as a subgraph. This is not possible because each vertex of the cycle has degree at least $(1/2 - 1/p)n$ and so the $2k$ vertices of the cycle would have at least one vertex in common in $S$, implying that $G$ already contained $W_{2k+1}$. On the other hand, if $u$ were part of the $C_{2k}$ of a $W_{2k+1}$ in $G'$, then $u$ would be adjacent to the centre $c$ and two more vertices $v_1$ and $v_2$ of the $W_{2k+1}$, all lying in $T$, in $G'$. If this were the case, $c, v_1,$ and $v_2$ would similarly already be adjacent to at least $2k - 2$ vertices in $S$, in $G$. This is not possible again because then $G$ would have already had a $W_{2k+1}$. Hence, $G'$ is $W_{2k+1}$-free.

Now since, $\lambda_1(G) \geq \lambda_1(G')$, it implies that 
\begin{equation}
\label{eqn39}
    \begin{split}
       0 \leq \lambda_1(G) - \lambda_1(G') &\leq \dfrac{\mathbf{x}^T (A(G) - A(G')) \mathbf{x}}{\mathbf{x}^T\mathbf{x}} = \dfrac{2 \mathbf{x}_u}{\mathbf{x}^T\mathbf{x}}\bigg(\sum_{i = 1}^d x_{u_i} - \sum_{\substack{uw \not\in E(G)\\ w \in T }} x_w\bigg)\\
        & \leq \dfrac{2 \mathbf{x}_u}{\mathbf{x}^T\mathbf{x}} \bigg(d - (1 - \epsilon)|W|\bigg)
    \end{split}
\end{equation}
where $W = \{w : uw \not\in E(G), w \in T\}$.

Therefore, $d - (1 - \epsilon)(|W|) \geq 0$. Choosing $\epsilon < \frac{1}{k}$ implies $|W| \leq d$. 

\end{proof}

We can use Lemma \ref{lem6}, to say the following. Without loss of generality choose one of the parts, say $S$, and a set  of vertices,  $F \subset S$. If $|F| = f$, and every vertex in $F$ has internal degree at most $d$, then for any $R\subset T$ there is a subset of $R$ of size $|R| - fd$, such that each vertex in this set is adjacent to all the vertices of $F$.  

Our next goal is to show that there is a vertex of internal degree $k-1$ (Lemma \ref{lem9}). In order to prove this, we use the following lemma that allows us to control the density inside each part.

\begin{lemma}\label{general interlacing use}
Assume that $C$ is a constant and that at most $C$ edges may be removed from $G$ so that the graph induced by $S$ has maximum degree $a$ and the graph induced by $T$ has maximum degree $b$. Then for $n$ large enough we must have $a+b \geq k-1$. If $C = 0$ we must have $a+b \geq k$.
\end{lemma}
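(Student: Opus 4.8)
My plan is to argue by contradiction: suppose that after deleting at most $C$ edges the graph induced on $S$ has maximum degree $a$ and the graph induced on $T$ has maximum degree $b$, but $a+b \le k-2$ (or $a+b\le k-1$ in the case $C=0$). I want to produce a $W_{2k+1}$-free graph on $n$ vertices with strictly larger spectral radius than $G$, contradicting $G \in \mathrm{SPEX}(n,W_{2k+1})$. The natural candidate is the graph $\tilde G$ built in the proof of Lemma \ref{lemma first lower bound quotient matrix}: a complete bipartite graph with parts of sizes $\lceil n/2\rceil$ and $\lfloor n/2\rfloor$, with a $(k-1)$-regular (or nearly $(k-1)$-regular) $P_{2k-1}$-free graph $G_1$ from $\mathcal{U}_{k,\lceil n/2\rceil}$ embedded in the larger part. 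That graph is $W_{2k+1}$-free (its larger part has no $P_{2k-1}$, so no neighborhood contains $C_{2k}$) and by \eqref{lower bound G} has $\lambda_1(\tilde G) > \frac{n+k-1}{2}$ with a fairly explicit lower bound of the form $\frac{k-1+\sqrt{(k-1)^2+n^2-1}}{2} + \frac{c}{n}$.

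The key step is then to get a matching \emph{upper} bound on $\lambda_1(G)$ under the contrary hypothesis that is smaller than this, for $n$ large. Here I would use the max-cut structure: $G$ is contained in the join of $G[S]$ and $G[T]$ plus the complete bipartite graph between them, and after removing $C$ edges $G[S]$, $G[T]$ have max degrees $a,b$. Removing $C$ edges changes $\lambda_1$ by $O(C/\lambda_1) = O(1/n)$ (via the Rayleigh quotient / eigenvector bound, using $x_u \le 1$ from Lemma \ref{lem5.6} and $\lambda_1 = \Theta(n)$), so it suffices to bound the spectral radius of the join of a graph on $|S|$ vertices with max degree $a$ and a graph on $|T|$ vertices with max degree $b$. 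By Lemma \ref{thm5} this is at most $\lambda_1(B)$ where $B = \begin{bmatrix} a & |T| \\ |S| & b \end{bmatrix}$, and since $|S|+|T|=n$, $\lambda_1(B) = \frac{a+b+\sqrt{(a-b)^2 + 4|S||T|}}{2} \le \frac{a+b+\sqrt{(a+b)^2 + n^2}}{2}$, using $4|S||T| \le n^2$ and completing the square. Thus
\[
\lambda_1(G) \le \frac{a+b+\sqrt{(a+b)^2+n^2}}{2} + O(1/n).
\]

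Finally I would compare the two bounds. Writing $f(x) = \frac{x + \sqrt{x^2 + n^2}}{2}$, which is increasing in $x$, the contrary hypothesis $a+b \le k-2$ gives $\lambda_1(G) \le f(k-2) + O(1/n)$, while $\lambda_1(\tilde G) \ge f(k-1) + O(1/n)$ (noting the relevant radicands differ from $(k-1)^2+n^2$ only by $O(1)$). Since $f(k-1) - f(k-2) = \Theta(1/n)$ is a positive quantity whose leading term can be computed explicitly (roughly $\frac{1}{2}$ from the linear term dominates, actually $f(k-1)-f(k-2) \to \frac12$ as $n\to\infty$ once one accounts for the $\sqrt{\cdot}$ terms contributing $O(1/n)$), this gap dominates the $O(1/n)$ error terms for $n$ large, a contradiction. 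When $C=0$ the same computation with $a+b \le k-1$ versus the bound $f(k-1)$ for $\tilde G$ needs slightly more care, because the leading terms match; here I would use the extra $+\frac{2(1-\epsilon)^2-\gamma}{n}$ slack in \eqref{lower bound G} (the embedded matching $K_2$ in the smaller part of $\tilde G$) together with the fact that $C=0$ means \emph{no} edge correction is needed on the $G$ side, so the comparison is between $f(k-1)$ exactly (up to the $\sqrt{\cdot}$ lower-order terms) and $f(k-1) + \Omega(1/n)$.

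The main obstacle I anticipate is the $C=0$ borderline case: there the two spectral radii agree to leading order \emph{and} to order $1/n$ in the dominant linear term, so the argument must be carried out carefully at the level of the $1/n$ correction, tracking the contributions of the $\sqrt{(a+b)^2+n^2}$ versus $\sqrt{(k-1)^2+n^2}$ terms, the $\le n^2$ versus $= n^2-1$ discrepancy in $4|S||T|$, the $+2(1-\epsilon)^2/n$ bonus from the extra edge in $\tilde G$, and the $\gamma/n$ parity loss. Getting the signs and constants right so that $\tilde G$ genuinely wins requires a precise (but elementary) expansion of $f$ near $x = k-1$ and a comparison of the $1/n$ coefficients; everything else is routine.
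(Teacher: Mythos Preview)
Your proposal is correct and follows essentially the same approach as the paper: combine the lower bound \eqref{lower bound G} on $\lambda_1(G)$ with the upper bound obtained from Lemma~\ref{thm5} applied to the join structure (plus a Rayleigh-quotient correction of size $\frac{2C}{\mathbf{x}^T\mathbf{x}} \le \frac{2C}{n(1-\epsilon)^2}$ for the $C$ extra edges, using Lemma~\ref{lem5.6}), and compare. Your identification of the delicate $C=0$ case and its resolution via the $+\frac{2(1-\epsilon)^2-\gamma}{n}$ slack in \eqref{lower bound G} is exactly what the paper does, just stated more explicitly.
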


\begin{proof}
By \eqref{lower bound G} and since $G$ is extremal, for $n$ large enough we must have $\lambda_1(G) > \frac{k-1 + \sqrt{(k-1)^2+n^2-1}}{2} + \frac{1}{2n}$. On the other hand, if $G$ is the subgraph of a graph of maximum degree $a$ joined to a graph of maximum degree $b$ plus at most $C$ edges, then we have by Lemmas \ref{thm5} and \ref{lem5.6}
\[
\lambda_1(G) = \frac{\mathbf{x}^TA(G)\mathbf{x}}{\mathbf{x}^T\mathbf{x}} \leq \lambda_1\left( \begin{bmatrix} a & |T| \\ |S| & b \end{bmatrix}\right) + \frac{2C}{\mathbf{x}^T\mathbf{x}} \leq \frac{a+b +\sqrt{(a+b)^2 + n^2}}{2} + \frac{2C}{n(1-\epsilon)^2}.
\]
For $n$ large enough, combining the two inequalities gives that for $C=0$ we must have $a+b\geq k$ and otherwise $a+b \geq k-1$.
\end{proof}

\begin{lemma}
\label{lem9}
For $k \geq 3, k\not\in \{4,5\},$ there exists at least one vertex in $G[S]$ or $G[T]$ with degree equal to $k-1$.
\end{lemma}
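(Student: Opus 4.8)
The plan is to argue by contradiction: suppose every vertex in $G[S]$ and $G[T]$ has internal degree at most $k-2$. By Lemma \ref{general interlacing use} applied with $C=0$, $a=b=k-2$, we would need $2(k-2) \ge k$, i.e. $k \ge 4$; so the case $k=3$ is immediate, and we may assume $k \ge 6$ (recall $k\notin\{4,5\}$). Now the idea is to use the eigenvector equation at a well-chosen vertex to show we can add an edge inside $S$ (or $T$) without creating a $W_{2k+1}$, contradicting extremality — but the obstruction is that adding an internal edge might push some internal degree up to $k-1$ in a way that, combined with the common-neighborhood structure in the opposite part, creates a forbidden wheel. The key point is that when all internal degrees are at most $k-2$, there is "room": by Lemma \ref{lem6} every vertex of internal degree $d \le k-2$ misses at most $d \le k-2$ vertices of the opposite part, and by the remark following Lemma \ref{lem6}, any small set $F$ of such vertices has $|R|-|F|(k-2)$ common neighbors in any $R\subseteq T$, which is still $\Omega(n)$ for $|F|$ bounded.

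First I would fix a vertex $u$, say in $S$, of maximum internal degree $d \le k-2$, and let $G'$ be obtained from $G$ by adding one edge $uu'$ inside $S$ to a non-neighbor $u' \in S$ (such $u'$ exists since $|S| = n/2 + O(\sqrt{n})$ is far larger than $k$). I claim $G'$ is still $W_{2k+1}$-free for a suitable choice of $u'$. Indeed, any new wheel $W_{2k+1}$ in $G'$ must use the edge $uu'$. If $u$ or $u'$ is the center, then its neighborhood contains a $C_{2k}$ through the new edge; but the center has internal degree at most $k-1$ in $G'$, so at least $2k - (k-1) = k+1$ of the cycle vertices lie in the opposite part $T$, and as in the proof of Lemma \ref{lem6} these $\ge k+1$ vertices of $T$ together with their $\Omega(n)$ common neighbors in $S$ would already force a $W_{2k+1}$ in $G$. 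If instead the edge $uu'$ lies on the rim, then both $u$ and $u'$ are adjacent to the center $c$ and $u$ (resp. $u'$) has a further rim-neighbor; tracing which of these lie in $S$ versus $T$ and again invoking Lemma \ref{lem6} together with the common-neighborhood count shows $G$ already contained a $W_{2k+1}$. The arithmetic that makes all of this work — "$\ge k+1$ vertices of $T$ with a common neighbor in $S$, plus a cycle among them, is a wheel in $G$" — is exactly where $k \ge 6$ (so that $2(k-2) \ge k+1$, giving the needed slack over the trivial bound) is used; this case analysis is the main obstacle and must be done carefully.

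Having shown $G'$ is $W_{2k+1}$-free, the contradiction is immediate from the eigenvector estimate: by Lemma \ref{lem5.6} all entries of the Perron vector $\mathbf{x}$ satisfy $x_v > 1-\epsilon'$, so
\[
\lambda_1(G') - \lambda_1(G) \;\ge\; \frac{\mathbf{x}^T(A(G')-A(G))\mathbf{x}}{\mathbf{x}^T\mathbf{x}} \;=\; \frac{2 x_u x_{u'}}{\mathbf{x}^T\mathbf{x}} \;>\; 0,
\]
contradicting $G \in \mathrm{SPEX}(n, W_{2k+1})$. Hence some vertex of $G[S]$ or $G[T]$ has internal degree exactly $k-1$. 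I expect the write-up to spend essentially all of its length on the second paragraph's case analysis — verifying for each way a new wheel could arise that the common-neighborhood bound from Lemma \ref{lem6} (with the crude $p$ and $\epsilon$ chosen earlier) is large enough to have already produced a wheel in $G$ — while the interlacing input (Lemma \ref{general interlacing use}) disposes of $k=3$ and the eigenvector step closes the argument in one line.
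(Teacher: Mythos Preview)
Your plan diverges substantially from the paper's proof, and the key step does not close in the way you sketch.

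The paper never attempts to add an edge and argue directly that no wheel is created. Instead it runs a three-stage bootstrap entirely through Lemma~\ref{general interlacing use}. First it shows (Claim~\ref{lem7}, your $k=3$ step writ large) that some vertex has internal degree at least $\lceil k/2\rceil$; call its part $L$. Taking this vertex $v$ together with $\lceil k/2\rceil$ of its $L$-neighbours and passing to their common neighbourhood $R'$ in the other part $R$, the $C_{2k}$-freeness of $G[N(v)]$ forces $G[R']$ to have no $\lceil k/2\rceil$ disjoint $P_3$'s; a short counting argument then shows that after deleting $O_k(1)$ edges the maximum degree of $G[R]$ drops to $1$. Now $a\le k-3$ with $b=1$ contradicts Lemma~\ref{general interlacing use}, so some $v\in L$ has $d_L(v)\ge k-2$. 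Repeating the common-neighbourhood step with $k-1$ vertices yields a large $R''\subset R$ in which any $k-2$ disjoint paths span at most $k+1$ vertices; precisely for $k\ge 6$ this forbids four disjoint edges, so $G[R]$ has max degree $0$ after deleting $O_k(1)$ edges, and $a\le k-2$, $b=0$ contradicts Lemma~\ref{general interlacing use} once more. The restriction $k\notin\{4,5\}$ enters only at this last combinatorial count.

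The gap in your plan is the ``$u$ (or $u'$) is the centre'' case. You correctly note that the rim is a $C_{2k}$ lying in $G$ (the new edge $uu'$ is a spoke), with at most $k-1$ rim vertices in $S$ and at least $k+1$ in $T$. But your appeal to ``their $\Omega(n)$ common neighbours in $S$ would already force a $W_{2k+1}$ in $G$'' does not produce a wheel: a centre in $G$ must be adjacent to \emph{all} $2k$ rim vertices. No vertex of $T$ can serve, since it would need at least $k+1>k-2$ neighbours in $T$; and no vertex of $S$ can serve when the rim uses all of $N_S(u)\cup\{u'\}$ (which it may), since that would require internal degree $\ge k-1$, contrary to hypothesis. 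The analogous obstruction is worse in the ``$uu'$ on the rim'' case, because there the $C_{2k}$ itself is absent from $G$. So the reduction to Lemma~\ref{lemma5.4}/Lemma~\ref{lem6} fails: in those lemmas the new neighbourhood lies entirely in one part, which is exactly what makes the common-neighbour substitution succeed. Finally, your inequality $2(k-2)\ge k+1$ already holds at $k=5$, so it cannot be what singles out $k\ge 6$; in the paper that restriction arises from the disjoint-paths count in $R''$, not from any eigenvector slack.
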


\begin{proof}
To prove this lemma, it suffices to show that there exists a vertex in $G[S]$ or $G[T]$, with degree at least $k-1$.
We prove this lemma by recursively applying Lemma \ref{general interlacing use} to show the existence of a vertex with higher and higher degrees in either $G[S]$ or $G[T]$. We begin by proving the following claim. 
\begin{claim}
\label{lem7}
There exists at least one vertex in $G[S]$ or $G[T]$ with degree at least $\frac{k}{2}$. 
\end{claim}
\begin{proof}

Assume to the contrary that there do not exist any such vertices in $G$. Then every vertex has `internal degree' at most $\frac{k-1}{2}$. Then $G$ must be a subgraph of some graph $H$ of the form of Lemma \ref{general interlacing use}, where $H$ is the join of a graph $H_1$ with maximum degree $a$ and another graph $H_2$ with maximum degree $b$, where $a = b \leq \frac{k-1}{2}$, and $n_0 = |S|$.
Then $a + b \leq k-1$, which contradicts Lemma \ref{general interlacing use}, since $C = 0$. Hence, there must exist a vertex in $G[S]$ or $G[T]$ with degree greater than $\frac{k-1}{2}$. 
\end{proof}

It follows from Claim \ref{lem7} that if $k$ is odd, then in fact there must exist a vertex with `internal degree' at least $\frac{k+1}{2}$. This proves the lemma for $k=3$.
Now to be precise about which part contains a vertex of large internal degree, we will use the following notation.
Let $L \in \{S, T\}$ be a part of $G$ such that $G[L]$ has a vertex, $v$, of degree at least $\frac{k}{2}$. Let $R:= L^{\mathsf{c}}$. 

Let $ \mathcal{N} = \{v_1, v_2, \ldots, v_{\ceil{\frac{k}{2}}}\}$, be a set of $\ceil{\frac{k}{2}}$ distinct vertices in $N_L(v)$. Then all the the vertices in $\mathcal{N} \cup \{v\}$ must be adjacent to a set $R' \subset R$, of minimum size $|R| - \ceil{\frac{k+2}{2}}(k-1)$.  

\tikzset{every picture/.style={line width=0.75pt}} 
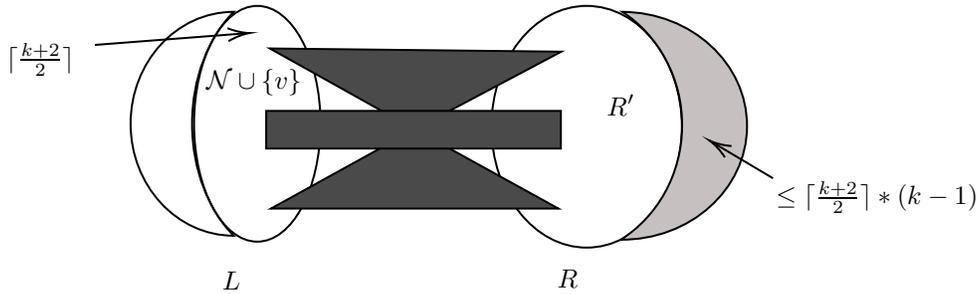
\begin{figure}[h!]
    \centering
    
\begin{tikzpicture}[x=0.75pt,y=0.75pt,yscale=-1,xscale=1]

\draw   (106.23,115.2) .. controls (106.23,82.34) and (120.51,55.7) .. (138.12,55.7) .. controls (155.73,55.7) and (170,82.34) .. (170,115.2) .. controls (170,148.06) and (155.73,174.7) .. (138.12,174.7) .. controls (120.51,174.7) and (106.23,148.06) .. (106.23,115.2) -- cycle ;
\draw   (256.23,116.7) .. controls (256.23,83.01) and (277.73,55.7) .. (304.24,55.7) .. controls (330.76,55.7) and (352.26,83.01) .. (352.26,116.7) .. controls (352.26,150.39) and (330.76,177.7) .. (304.24,177.7) .. controls (277.73,177.7) and (256.23,150.39) .. (256.23,116.7) -- cycle ;
\draw  [fill={rgb, 255:red, 198; green, 192; blue, 192 }  ,fill opacity=1 ] (322.23,58.7) .. controls (357.18,58.93) and (385.34,84.86) .. (385.13,116.62) .. controls (384.92,148.38) and (356.42,173.93) .. (321.47,173.7) .. controls (339.86,162.04) and (352.22,140.78) .. (352.38,116.4) .. controls (352.55,92.03) and (340.47,70.6) .. (322.23,58.7) -- cycle ;
\draw    (398.23,142.7) -- (367.95,124.72) ;
\draw [shift={(366.23,123.7)}, rotate = 390.7] [color={rgb, 255:red, 0; green, 0; blue, 0 }  ][line width=0.75]    (10.93,-3.29) .. controls (6.95,-1.4) and (3.31,-0.3) .. (0,0) .. controls (3.31,0.3) and (6.95,1.4) .. (10.93,3.29)   ;
\draw  [fill={rgb, 255:red, 74; green, 74; blue, 74 }  ,fill opacity=1 ] (217.52,118) -- (290.02,158) -- (145.02,158) -- cycle ;
\draw  [fill={rgb, 255:red, 74; green, 74; blue, 74 }  ,fill opacity=1 ] (217.52,118) -- (144.45,77.19) -- (291.46,78.81) -- cycle ;
\draw  [color={rgb, 255:red, 0; green, 0; blue, 0 }  ,draw opacity=1 ][fill={rgb, 255:red, 74; green, 74; blue, 74 }  ,fill opacity=1 ] (142.62,108.7) -- (291.23,108.7) -- (291.23,127.7) -- (142.62,127.7) -- cycle ;
\draw   (126.23,171.7) .. controls (97.51,171.7) and (74.23,146.4) .. (74.23,115.2) .. controls (74.23,84) and (97.51,58.7) .. (126.23,58.7) .. controls (113.49,71.53) and (105.23,92.06) .. (105.23,115.2) .. controls (105.23,138.34) and (113.49,158.87) .. (126.23,171.7) -- cycle ;
\draw    (56.23,75.4) -- (132.24,69.55) ;
\draw [shift={(134.23,69.4)}, rotate = 535.6] [color={rgb, 255:red, 0; green, 0; blue, 0 }  ][line width=0.75]    (10.93,-3.29) .. controls (6.95,-1.4) and (3.31,-0.3) .. (0,0) .. controls (3.31,0.3) and (6.95,1.4) .. (10.93,3.29)   ;

\draw (398.23,142.7) node [anchor=north west][inner sep=0.75pt]    {$ \leq \lceil \frac{k+2}{2} \rceil *( k-1)$};
\draw (119,188) node [anchor=north west][inner sep=0.75pt]   [align=left] {$\displaystyle L$};
\draw (288,187) node [anchor=north west][inner sep=0.75pt]   [align=left] {$\displaystyle R$};
\draw (10,73) node [anchor=north west][inner sep=0.75pt]    {$\lceil \frac{k+2}{2} \rceil $};
\draw (313,99) node [anchor=north west][inner sep=0.75pt]   [align=left] {$\displaystyle R'$};
\draw (111,86) node [anchor=north west][inner sep=0.75pt]   [align=left] {$\displaystyle \mathcal{N}\cup \{v\}$};

\end{tikzpicture}
\caption{$\mathcal{N}\cup \{v\}$ is a subset of $L$ which is adjacent to $R'\subset R$ of size at least $|R| - \lceil\frac{k+2}{2}\rceil(k-1)$}
    \label{fig:my_label}
\end{figure}

Next, observe that $G[R']$ has at most $\floor{\frac{3k-2}{2}}$ vertices in any $\ceil{\frac{k}{2}}$ disjoint paths. This is because $C_{2k} \not\subset G[N(v)] \subset G$. In particular, there cannot be $\ceil{\frac{k}{2}}$ vertex disjoint paths on $3$ vertices or more in $G[R']$. Now, observe that any two vertices $u_1$ and $u_2$ having $d_{R'}(u_i) \geq 2$ and lying at distance $3$ or more from each other in $G[R']$ are contained in two disjoint $P_3$'s. Since there are at most $k^2$ vertices in $G[R']$ at a distance less than or equal to $2$ from any fixed vertex, it implies that there must be less than $\frac{k^3}{2}$ vertices of degree $2$ or more in $G[R']$.   

Thus, $e(G[R']) < \frac{|R'|}{2} + \frac{k^4}{2}$  (where, up to $\frac{|R'|}{2}$ edges may come from a maximal matching in $R'$ and the remaining edges from those unaccounted edges adjacent to the set of vertices with degree at least $2$ in $G[R']$).

Therefore, there are at most $\frac{|R|}{2} + \frac{k^4}{2} + \ceil{\frac{k+2}{2}}(k-1)^2$ many edges in $G[R]$, and at least $|R| - \frac{k^3}{2} - \ceil{\frac{k+2}{2}}(k-1)^2$ vertices in $G[R]$ have degree at most $1$.

Next we prove the following claim while applying the same argument as in Claim \ref{lem7}.
\begin{claim}
\label{lem8}
There exists at least one vertex $v \in L$ with $d_L(v) \geq k-2$
\end{claim}
\begin{proof}
Assume to the contrary that there do not exist any such vertices in $L$.  Then every vertex in $G[L]$ has degree at most $k-3$. 
Obseve that $G$ must be a subgraph of some graph $H$ of the form of Lemma \ref{general interlacing use}, where $H$ is the join of a graph $H_1$ with maximum degree $a$ and another graph $H_2$ with maximum degree $b$; plus at most $\frac{k^4}{2} + \ceil{\frac{k+2}{2}}(k-1)^2$ more edges embedded in $H_2$, where $a=k-3$, $b = 1$, and $n_0 = |L|$. Then $a+b \leq k-2$ with $C = \frac{k^4}{2} + \ceil{\frac{k+2}{2}}(k-1)^2$, which contradicts Lemma \ref{general interlacing use} for $n$ large enough. Thus, there must exist a vertex either in $G[S]$ or $G[T]$ with degree at least $k-2$.
\end{proof}

Now let $\mathcal{N}' = \{v_1, v_2, \ldots, v_{k-2}\}$, be a set of $k-2$ distinct vertices in $N_L(v)$. Then all the the vertices in $\mathcal{N}' \cup \{v\}$ must be adjacent to a set $R'' \subset R$, of minimum size $|R| - (k-1)^2$. It follows from our arguments above that $G[R'']$ has at most $k+1$ vertices in any $k-2$ disjoint paths. Thus, for $k \geq 6$,  $G[R'']$ cannot have $4$ vertex disjoint edges. Lemma \ref{lemma5.4} implies that any edge in $G[R'']$ is adjacent to at most $2(k-2)$ other edges. 

It follows from this that for $k\geq 6$, there must be at most $3(2k-3) + (k-1)(k-1)^2$ many edges in $G[R]$ and at least $|R| - 3(2k-2)  - (k-1)(k-1)^2$ vertices  in $G[R]$ have degree equal to $0$.

Finally, to show the existence of a vertex in $L$ with `internal degree' $k-1$,
assume to the contrary that there do not exist any such vertices in $L$. Then every vertex in $G[L]$ has degree at most $k-2$. Then $G$ must be a subgraph of some graph $H$ of the form of Lemma \ref{general interlacing use}, where $H$ is the join of a graph $H_1$ with maximum degree $a$ and another graph $H_2$ with maximum degree $b$; plus at most $3(2k-3) + (k-1)(k-1)^2$ more edges embedded in $H_2$, where $a=k-2$, $b = 0$, and $n_0 = |L|$. Then $a+b \leq k-2$ with $C = 3(2k-3) + (k-1)(k-1)^2$, which contradicts Lemma \ref{general interlacing use} for $n$ large enough. Thus, there must exist a vertex either in $G[S]$ or $G[T]$ with degree at least $k-1$.
\end{proof}

\begin{lemma}
\label{lem10}
There exist at least $4k^2 + 1$ vertices, $v \in L$, such that $d_L(v) = k-1$ and at most one edge in $G[R]$.
\end{lemma}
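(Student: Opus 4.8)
The plan is to prove the statement in three stages: (a) a crude bound showing that $e(G[R])$ is bounded by a constant depending only on $k$; (b) the claim that $L$ contains at least $4k^2+1$ vertices of internal degree $k-1$; and (c) the refinement of (a) to ``$G[R]$ has at most one edge''. The engine for all three stages is the following cycle‑building device. Fix any $v\in L$ with $d_L(v)=k-1$, let $N_L(v)=\{u_1,\dots,u_{k-1}\}$, and put
\[
R_v^*=\{\,r\in R:\ r\sim v\ \text{and}\ r\sim u_i\ \text{for every }i\,\}.
\]
Since each of the $k$ vertices $v,u_1,\dots,u_{k-1}$ has internal degree at most $k-1$, Lemma~\ref{lem6} gives $|R\setminus R_v^*|\le k(k-1)$. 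I claim $G[R_v^*]$ has at most one edge. Note $R_v^*\subseteq N(v)$ and every $u_i$ is adjacent to every vertex of $R_v^*$, so $G[N(v)]$ contains the complete bipartite graph between $\{u_1,\dots,u_{k-1}\}$ and $R_v^*$ together with all edges of $G[R_v^*]$. If $G[R_v^*]$ contained a path $a\,b\,c$, then, choosing distinct $r_1,\dots,r_{k-2}\in R_v^*\setminus\{a,b,c\}$, the closed walk $a\,b\,c\,u_1\,r_1\,u_2\,r_2\cdots u_{k-1}\,a$ is a $2k$‑cycle in $G[N(v)]$; if $G[R_v^*]$ contained two independent edges $ab,cd$, then, choosing distinct $r_1,\dots,r_{k-3}\in R_v^*\setminus\{a,b,c,d\}$, the closed walk $a\,b\,u_1\,c\,d\,u_2\,r_1\,u_3\cdots u_{k-1}\,a$ is a $2k$‑cycle in $G[N(v)]$. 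Both contradict $W_{2k+1}$‑freeness, and two distinct edges either share a vertex or are independent, so $G[R_v^*]$ has at most one edge.

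For stage (a), I would apply the previous paragraph to the vertex of internal degree $k-1$ in $L$ furnished by Lemma~\ref{lem9}. Every edge of $G[R]$ either lies inside $R_v^*$ (at most one such) or is incident to $R\setminus R_v^*$ (at most $k(k-1)\cdot(k-1)$ such, since $G[R]$ has maximum degree at most $k-1$ by Lemma~\ref{lemma5.4}), so $e(G[R])\le 1+k(k-1)^2$, a constant. For stage (b), suppose toward a contradiction that $L$ has at most $4k^2$ vertices of internal degree $k-1$ (recall internal degree is always at most $k-1$). Deleting one $G[L]$‑edge at each of these vertices — at most $4k^2$ edges — leaves $G[L]$ with maximum degree at most $k-2$, and deleting the at most $1+k(k-1)^2$ edges of $G[R]$ leaves $G[R]$ edgeless. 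Thus a constant number $C:=4k^2+1+k(k-1)^2$ of edges can be removed from $G$ so that $G[L]$ has maximum degree $a=k-2$ and $G[R]$ has maximum degree $b=0$; since $C\ge1$, Lemma~\ref{general interlacing use} forces $a+b\ge k-1$, i.e.\ $k-2\ge k-1$, a contradiction. Hence $L$ has at least $4k^2+1$ vertices of internal degree $k-1$.

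For stage (c), suppose $G[R]$ had two edges $e_1,e_2$. For an edge $e=xy$ of $G[R]$, call a vertex $v\in L$ with $d_L(v)=k-1$ \emph{bad for $e$} if $\{x,y\}\not\subseteq R_v^*$, i.e.\ if $v$, or some member of $N_L(v)$, fails to be adjacent to $x$ or to $y$. Each of $x,y$ has internal degree at most $k-1$ (Lemma~\ref{lemma5.4}), so by Lemma~\ref{lem6} each is non‑adjacent to at most $k-1$ vertices of $L$; writing $B\subseteq L$ for the (at most $2(k-1)$) vertices of $L$ non‑adjacent to $x$ or to $y$, the vertices bad for $e$ all lie in $B\cup\bigcup_{u\in B}N_L(u)$, a set of size at most $2(k-1)+2(k-1)^2=2k(k-1)$ (using $d_L(u)\le k-1$). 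Therefore at most $4k(k-1)<4k^2$ vertices of internal degree $k-1$ in $L$ are bad for $e_1$ or for $e_2$, so by stage (b) there is such a vertex $v$ with $e_1,e_2\subseteq R_v^*$; then $G[R_v^*]$ has at least two edges, contradicting the first paragraph. Hence $G[R]$ has at most one edge, and together with stage (b) this proves the lemma.

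The part I expect to require the most care is the verification of the two cycle constructions — checking that the listed vertices are genuinely distinct and adjacent as required (using $R_v^*\subseteq N(v)$, $u_i\in N(v)$, and completeness of the bipartite part), and that the walks have length exactly $2k$ for every $k\ge3$, including the degenerate small cases such as $k=3$ — together with the logistical ordering of the argument: the constant bound on $e(G[R])$ from stage (a) must be in hand before Lemma~\ref{general interlacing use} can be invoked for stage (b), and stage (b) must precede the double‑counting of stage (c).
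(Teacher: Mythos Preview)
Your proof is correct and follows essentially the same strategy as the paper: use a constant bound on $e(G[R])$ together with Lemma~\ref{general interlacing use} to force at least $4k^2+1$ vertices of internal degree $k-1$ in $L$, then pick one such vertex $v$ whose entire $L$-neighbourhood lies in the common $L$-neighbourhood of the endpoints of two hypothetical $R$-edges, and exhibit a $C_{2k}$ in $G[N(v)]$.

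The one organisational difference worth noting is your stage~(a). The paper does not re-derive a constant bound on $e(G[R])$ here; it simply imports the bound $e(G[R])\le 3(2k-3)+(k-1)^3$ obtained as a byproduct at the end of the proof of Lemma~\ref{lem9}, and feeds that directly into Lemma~\ref{general interlacing use}. Your stage~(a) instead proves the sharper reusable fact ``$G[R_v^*]$ has at most one edge'' via the two explicit $2k$-cycle constructions, and then harvests both the crude edge bound (stage~(a)) and the final contradiction (stage~(c)) from it. This makes your argument self-contained and uniform in $k\ge 3$, whereas the paper's imported bound was established in the proof of Lemma~\ref{lem9} only for $k\ge 6$ (the $k=3$ case having been handled earlier in that proof); in exchange, the paper's version of stage~(c) is slightly terser, constructing the wheel in one line rather than splitting into the $P_3$/matching cases.
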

\begin{proof}
If there are at most $4k^2$ vertices, $v \in L$, with $d_L(v) = k-1$, then deleting at most $4k^2$ edges of $G[L]$ makes its maximum degree go down to $k-2$. Then, applying Lemma \ref{general interlacing use} gives us a contradiction as $a = k-2$, $b = 0$ and $C \leq 3(2k-3) + (k-1)(k-1)^2 + 4k^2$.  Hence, $L$ has at least $4k^2 + 1$ vertices, $v$, with $d_L(v) = k-1$.

Now suppose $G[R]$ had 2 or more edges, $\{u_1, u_2\}$ and $\{v_1, v_2\}$. Let $\mathcal{I}:= N_L(u_1) \cap N_L(u_2) \cap N_L(v_1) \cap N_L(v_2)$. Then, by Lemmas \ref{lem6} and \ref{lem9}, there are at most $4(k-1)$ vertices in $L \setminus \mathcal{I}$. Now, since there are at least $4k^2+1$ vertices, $c \in L$, with $d_L(c) = k-1$, there exists at least one vertex, $c \in I$, with $d_{\mathcal{I}}(v) = k-1$. This is because there are at most $4(k-1)(k-1) < 4k^2$ vertices $w \in L$, such that $N_L(w) \cup \{w\} \not\subset \mathcal{I}$. Say $\{c_1, c_2, \ldots, c_{k-1}\} = N_L(c)$. Observe that $G[N_R(c) \cap N_R(c_1) \cap \ldots \cap N_R(c_{k-1})] \supset \{u_1, u_2\} \cup \{v_1, v_2\} \sqcup (k-2) K_1$, since $|N_R(c) \cap N_R(c_1) \cap \ldots \cap N_R(c_{k-1})| > |R| - k^2$. Hence, there exist $W_{2k+1} \subset G[\{c\}  \cup N_G(c)] \subset G$, with $c$ as the centre. This is a contradiction. Hence, $G[R]$ has at most one edge.   
\end{proof}
 \begin{lemma}
 \label{lem12}
Let $\mathcal{C} \subset L$, such that $|\mathcal{C}| = c \geq 2$. If $\Hat{G}$ is any graph obtained by modifying $G$ by only changing the edges contained in $E(G[\mathcal{C}])$, such that $e(\Hat{G}[\mathcal{C}]) - e(G[\mathcal{C}]) = m > 0$. Then $\lambda_1(\Hat{G}) - \lambda_1(G) > 0$ for $\epsilon < \frac{m}{c(k-1)}$.
\end{lemma}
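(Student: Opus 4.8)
The plan is to argue by a Rayleigh quotient perturbation, exploiting that every eigenvector entry of $G$ is nearly $1$. Let $\mathbf{x}$ be the Perron eigenvector of $G$, normalized (as in Section \ref{structural lemmas section}) so that its largest entry equals $1$; by Lemma \ref{lem5.6} we then have $x_u > 1-\epsilon$ for every $u\in V(G)$, provided $n$ is large enough. Since $\hat{G}$ is obtained from $G$ by changing only edges with both endpoints in $\mathcal{C}$, the matrix $A(\hat{G})-A(G)$ is supported on $\mathcal{C}\times\mathcal{C}$, and
\[
\lambda_1(\hat{G})-\lambda_1(G)\;\geq\;\frac{\mathbf{x}^{T}\big(A(\hat{G})-A(G)\big)\mathbf{x}}{\mathbf{x}^{T}\mathbf{x}}\;=\;\frac{2}{\mathbf{x}^{T}\mathbf{x}}\left(\sum_{uv\in E(\hat{G}[\mathcal{C}])\setminus E(G[\mathcal{C}])}x_u x_v\;-\;\sum_{uv\in E(G[\mathcal{C}])\setminus E(\hat{G}[\mathcal{C}])}x_u x_v\right),
\]
using $\lambda_1(G)=\mathbf{x}^{T}A(G)\mathbf{x}/\mathbf{x}^{T}\mathbf{x}$ and $\lambda_1(\hat{G})\geq\mathbf{x}^{T}A(\hat{G})\mathbf{x}/\mathbf{x}^{T}\mathbf{x}$.

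Next I would estimate the bracketed quantity. Write $a$ for the number of added edges and $d$ for the number of deleted edges, so $a-d=m$. Each added edge contributes at least $(1-\epsilon)^2$ (since both endpoints have eigenvector entry exceeding $1-\epsilon$), and each deleted edge contributes at most $1$ (since all entries are at most $1$). Hence the bracket is at least
\[
a(1-\epsilon)^2-d\;=\;(d+m)(1-\epsilon)^2-d\;=\;m(1-\epsilon)^2-d\,\epsilon(2-\epsilon).
\]
It remains only to bound $d$. Since $\mathcal{C}\subseteq L$ and, by Lemma \ref{lemma5.4}, every vertex of $L$ has internal degree at most $k-1$, we get $d\leq e(G[\mathcal{C}])\leq\tfrac12 c(k-1)$. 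Bounding $\epsilon(2-\epsilon)\leq 2\epsilon$, the bracket is at least $m(1-\epsilon)^2-c(k-1)\epsilon$, which is positive once $\epsilon<\frac{m}{c(k-1)}$ (with $\epsilon$ sufficiently small, which is compatible with this bound since $m\geq 1$ and $c\geq 2$), i.e. for $n$ large. This yields $\lambda_1(\hat{G})>\lambda_1(G)$.

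There is no serious obstacle here; the argument is a standard one-step spectral edge-switching estimate. The two points that need care are (i) observing that the modification affects only pairs inside $\mathcal{C}$, so the entire change to the quadratic form is captured by $\sum x_ux_v$ over those pairs, and (ii) using the internal-degree bound from Lemma \ref{lemma5.4} to control the number of deletable edges by $\tfrac12 c(k-1)$ rather than the trivial $\binom{c}{2}$, since the latter would force a much smaller threshold on $\epsilon$. I would also emphasize that the lemma makes no claim that $\hat{G}$ is $W_{2k+1}$-free: it is a purely spectral monotonicity statement, to be combined elsewhere with separate verifications that the particular modifications under consideration preserve $W_{2k+1}$-freeness.
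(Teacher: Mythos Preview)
Your argument is correct and follows essentially the same Rayleigh-quotient approach as the paper: bound the change in the quadratic form using $x_u>1-\epsilon$ from Lemma~\ref{lem5.6} and the internal-degree bound $k-1$ from Lemma~\ref{lemma5.4}. The only cosmetic difference is that the paper bounds the full sums $e(\hat G[\mathcal C])(1-\epsilon)^2$ and $e(G[\mathcal C])$ rather than splitting into added/deleted edges, arriving at $m-c(k-1)\epsilon$ exactly; your version yields $m(1-\epsilon)^2-c(k-1)\epsilon$, which technically requires $\epsilon$ a touch smaller than $\tfrac{m}{c(k-1)}$ (as you note), but this is immaterial since $\epsilon$ may be taken arbitrarily small.
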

\begin{proof}
Take $n$ and $p$ large enough so that $\epsilon < \frac{m}{c(k-1)}$. Then
\begin{equation}
\begin{split}
  \lambda_1(\Hat{G}) - \lambda_1(G) &\geq \frac{\mathbf{x}^T (A(\Hat{G}) - A(G)) \mathbf{x}}{\mathbf{x}^T\mathbf{x}} \geq \frac{2}{\mathbf{x}^T\mathbf{x}}\bigg(e(\Hat{G}[\mathcal{C}])(1 - \epsilon)^2 - e(G[\mathcal{C}])\bigg)\\
        & > \frac{2}{\mathbf{x}^T\mathbf{x}} \bigg(e(\Hat{G}[\mathcal{C}])(1-2\epsilon) - e(G[C])\bigg) = \frac{2}{\mathbf{x}^T\mathbf{x}} \bigg(m - 2e(\Hat{G}[\mathcal{C}])\epsilon\bigg)\\
        & \geq \frac{2}{\mathbf{x}^T\mathbf{x}} \bigg(m - c(k-1)\epsilon \bigg)  > 0   
\end{split}  
\end{equation}
\end{proof}
 \begin{lemma}
 \label{lem11}
 The number of edges in $E(L,R)$ is $|L||R|$, that is, $E(L,R) = \{(l, r) \text{ for all } l\in L, r\in R\}$
 \end{lemma}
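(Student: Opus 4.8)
The plan is to argue by contradiction. Suppose $E(L,R)\neq L\times R$ and fix a missing edge $\{u,w\}$ with $u\in L$, $w\in R$. By Lemma~\ref{lem6} every vertex of $R$ with internal degree $0$ is already joined to all of $L$, and since $G[R]$ has at most one edge (Lemma~\ref{lem10}), $u$ is non-adjacent to at most two vertices of $R$, each an endpoint of the unique edge of $G[R]$. Let $G'$ be obtained from $G$ by adding to $u$ all of its missing edges into $R$; this adds at least one and at most $d_L(u)\le k-1$ edges and deletes none. Since every entry of the Perron vector $\mathbf{x}$ of $G$ is positive (indeed $>1-\epsilon$ by Lemma~\ref{lem5.6}), we get $\lambda_1(G')-\lambda_1(G)\ge \frac{\mathbf{x}^{T}(A(G')-A(G))\mathbf{x}}{\mathbf{x}^{T}\mathbf{x}}>0$. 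So it suffices to prove that $G'$ is still $W_{2k+1}$-free, which will contradict $G\in\mathrm{SPEX}(n,W_{2k+1})$.

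A fact worth establishing first, which is available at this point, is that $G[L]$ is $C_{2k}$-free: each of the $2k$ vertices of a hypothetical $C_{2k}\subseteq G[L]$ is, by Lemma~\ref{lem6}, adjacent to all but at most $k-1$ vertices of $R$, so by Lemma~\ref{inclusion exclusion lemma} they share a common neighbour in $R$ for $n$ large, producing $W_{2k+1}\subseteq G$ — impossible (and $G[R]$, having at most one edge, is trivially $C_{2k}$-free). Now any copy of $W_{2k+1}$ in $G'$ that is not in $G$ must use a new edge, hence contains $u$; recall all edges not incident to $u$ are unchanged and every new edge is a cross-edge. If $u$ is the hub, then $N_{G'}(u)$ meets $L$ in at most $k-1$ vertices while the rest of the rim lies in $R$, whose induced subgraph has at most one edge; counting runs of consecutive $R$-vertices around the rim $C_{2k}$ forces at least $k$ rim vertices in $L$, a contradiction. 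If $u$ lies on the rim with hub $c\in L$, the identical run-counting applies (the $R$-part of $N_G(c)$ has at most one edge, and no new edge lies among $R$-vertices), again forcing at least $k$ rim vertices in $L$, impossible.

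The remaining case is that $u$ lies on the rim and the hub is $c\in R$. Then $N_{G'}(c)=N_G(c)$ contains at most one vertex of $R$, so the rim uses at least $2k-1$ vertices of $L$ (including $u$) and at most one of $R$. If the rim lies entirely in $L$ it is a $C_{2k}$ in $G[L]$, contradicting the preliminary fact. Otherwise the rim has exactly one vertex $\rho\in R$, which must be the unique $R$-neighbour of $c$, so $\{c,\rho\}$ is the unique edge of $G[R]$; deleting $\rho$ leaves a path $P$ on $2k-1$ vertices of $L$ whose endpoints are adjacent to $\rho$, and $c$ is adjacent in $G'$ to every vertex of $P$. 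The aim is to convert this into a $W_{2k+1}$ already present in $G$: Lemma~\ref{inclusion exclusion lemma} applied to the $2k-1$ vertices of $P$ (each adjacent to all but at most $k-1$ vertices of $R$) gives many common neighbours of $P$ in $R$, which one combines with $\rho$, $c$ and the structure of $P$ to build a $2k$-cycle with a hub lying in $G$. The main obstacle is precisely this last step: the only $R$-vertices having a neighbour in $R$ are $c$ and $\rho$, and $c$ is non-adjacent in $G$ to $u$, which lies on $P$, so the reconstruction must route around the one or two newly added edges at $u$ while still producing a genuine hub; carrying this out seems to need, beyond the sparsity of $G[R]$ and the $C_{2k}$-freeness of $G[L]$, some control on long paths through $u$ inside $G[L]$ (morally, that $G[L]$ has no $P_{2k-1}$ in the relevant place), and this is where the argument is most delicate. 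Once $G'$ is confirmed $W_{2k+1}$-free, the strict increase of $\lambda_1$ contradicts the extremality of $G$, so $E(L,R)=L\times R$.
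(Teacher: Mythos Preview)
Your approach has a genuine gap in the final case, and the gap is not merely a matter of writing out more details: the claim you are trying to establish there --- that $G'$ is $W_{2k+1}$-free --- can fail. Concretely, suppose the unique edge of $G[R]$ is $\{c,\rho\}$, the only missing cross edge is $\{u,c\}$ with $d_L(u)=1$, and $G[L]$ contains a path $u=p_1,p_2,\ldots,p_{2k-1}$. In $G$ the vertex $\rho$ is adjacent to every vertex of $L$ (by Lemma~\ref{lem6}, since only $c$ can have a missing cross edge on the $R$ side of this configuration), so $\rho,p_1,\ldots,p_{2k-1},\rho$ is a $C_{2k}$ in $G'[L\cup\{\rho\}]$ and $c$, now adjacent to all of $L\cup\{\rho\}$ in $G'$, is its hub. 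You cannot convert this into a wheel already present in $G$: the only vertices of $R$ with an $R$-neighbour are $c$ and $\rho$, and $c$ is non-adjacent to $u$ in $G$, while replacing $u$ on the rim requires another $L$-neighbour of $p_2$, which need not exist when $d_L(u)=1$ and $d_L(p_2)$ is small. So the ``rerouting'' you propose is not available in general, and your hoped-for contradiction does not materialise.

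The paper's proof takes a different route that avoids this obstruction. It first reduces to at most one missing cross edge by deleting $\{r_1,r_2\}$ and adding both missing cross edges (which is easily seen to be $W_{2k+1}$-free), and then, instead of asserting that $G^+:=G\cup\{l_1,r_1\}$ is $W_{2k+1}$-free, it \emph{uses} the wheel that must appear in $G^+$ to deduce structural information: a run-counting argument rules out hubs in $L$, and if $d_L(l_1)\ge 2$ the wheel can be rerouted to one already in $G$. This forces $d_L(l_1)=1$, and then the key extra step --- which your plan is missing --- is to invoke Lemma~\ref{lem12}: one first observes that $G[L]$ contains no $P_{2k}$ (since $N(r_2)\supseteq L\cup\{r_1\}\setminus\{l_1\}$ is $C_{2k}$-free), and then replaces the component of $l_1$ in $G[L]$ by a $(k-1)$-regular, $P_{2k-1}$-free graph on the same vertex set. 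This strictly increases the number of internal edges (since $l_1$ had degree $1$), hence by Lemma~\ref{lem12} strictly increases $\lambda_1$, and the resulting graph is $W_{2k+1}$-free because its $L$-side now has neither a $K_{1,k}$ nor a $P_{2k-1}$. That more elaborate modification, not merely adding cross edges, is what closes the argument.
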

 
 \begin{proof}
 If $G[R]$ has no edges, then the result follows from Lemma \ref{lem6}.
 So we assume that there exists exactly one edge, $\{r_1, r_2\}$ in $G[R]$. It also follows from Lemma \ref{lem6} that $e(L, R) \geq |L||R|-2$, and at most two edges $\{l_1, r_1\}$ and $\{l_2, r_2\}$ are missing from $E(L,R)$.
 
 We will first show that at most one edge may be missing. Assume to the contrary that both are missing. Then modifying $G$ to the graph $\overline{G}$ by deleting the edge  $\{r_1, r_2\}$ and adding edges $\{l_1, r_1\}$ and $\{l_2, r_2\}$, strictly increases the spectral radius as may be seen by consequence of Lemma \ref{lem12}.
 However, observe that $\overline{G}$ is $W_{2k+1}$-free. This may be understood by first noting that no vertex in $R$ may be the centre of a $W_{2k+1}$ as this requires the occurrence of a $C_{2k} \subset \overline{G}[L] = G[L]$; and second that no vertex in $L$ may be the centre of a $W_{2k+1}$ as every vertex in $\overline{G}[L]$ is $K_{1, k}$ free and there are no edges in $\overline{G}[R]$. Thus, $\overline{G}[N(l)]$ is $C_t$ free for all $t \geq 2k - 1$ and $l \in L$.
 This contradicts the fact that $\lambda_1(G) \geq \lambda_1(\overline{G})$. So, $e(L, R) \geq |L||R|-1$, and at most one edge $\{l_1, r_1\}$ may be missing from $E(L,R)$. 
 
 Now, the neighborhood of $r_2$ may not contain a $C_{2k}$. Observe that this implies that there are no $C_{2k}$ contained in $G[L \cup \{r_1\}]$, which further implies that there are no $P_{2k-1}$ in $G[L]$ that do not have $l_1$ as one of the end points. It follows that no $P_{2k}$ can be contained in $G[L]$, as we could then select a path on $2k-1$ vertices without $l_1$ as one of its end points, whose vertices along with $r_1, r_2$, induce a graph with $W_{2k+1}$ as a subgraph.
 
 Let $G^+$ be the modification of $G$ obtained by adding the edge $\{l_1, r_1\}$. Then $\lambda_1(G^+) > \lambda_1(G)$, which implies that $G^+ \supset W_{2k+1}$.
 
 Note that either:
 \begin{itemize}
     \item [(i)] for some $l \in L$, the edge $\{l_1, r_1\}$ is part of a $W_{2k+1} \subset G^+[N(l) \cup \{l\}]$, or
     \item[(ii)] $r_1$ is the centre of a $W_{2k+1}$ in $G^+$ with $r_2, l_1 \in C_{2k} \subset G^+[N(r_1)]$, or
     \item[(iii)] the edge $\{l_1, r_1\}$ is part of a $C_{2k} \subset G^+[N(r_2)]$.
 \end{itemize}

In the first case, if $l \in L$ is such that $\{l_1, r_1\}$ is part of a $W_{2k+1} \subset G^+[N(l) \cup \{l\}]$. Then note that the $C_{2k} \subset G^+[N(l)]$ has at most $k-1$ vertices that lie in $L$. Therefore, there are at least $k+1$ vertices of $R$ in the $C_{2k}$. Now observe that other than the edge $\{r_1, r_2\}$, no two vertices of $R$ are adjacent to each other in the $C_{2k}$. Therefore, the number of vertices of $R$ lying in the $C_{2k}$ is maximized when the vertices of the $C_{2k}$ alternate between the left, $L$, and right, $R$, parts as we go along a path on $2k$ vertices in the $C_{2k}$, starting at $r_2$ and ending at $r_1$. Therefore. there are at most $k$ vertices from $R$ in the $C_{2k}$. This is a contradiction. Hence the first case is not possible. 
 
 In the second case, if $d_L(l_1) \geq 2$, then either there exists a $P_{2k}$ in $G^+[L] = G[L]$, or the vertices of the $C_{2k}$ in $G^+$ induce a graph with another $C_{2k} \ni r_1$ in $G$. The vertices of this new cycle are all adjacent to $r_2$, and hence we can say that $G$ would contain a $W_{2k+1}$ if $d_L(l_1) \geq 2$ under the first case.

Similarly, if the third case were to be true and $d_L(l_1) \geq 2$, then either there exists a $P_{2k}$ in $G^+[L] = G[L]$, or the vertices of the $C_{2k}$ induce a graph with another $C_{2k}$ as a subgraph. All of these vertices are adjacent to  $r_2$, and hence again, we would already have had a $W_{2k+1}$ in $G$.
 
Next, let us consider the situation when $d_L(l_1) = 1$. Let $\mathcal{C}$ be the connected component containing $l_1$ in $G^+[L]$. Now $|\mathcal{C}| \geq 2k-1$ and has no $P_{2k}$. If $|\mathcal{C}| \geq 2k$, then we can further modify the graph $G^+$ to $\Hat{G}$,  such that $\Hat{G}[\mathcal{C}]$ has no $P_{2k-1}$ and every vertex in $\mathcal{C}$ has internal degree $k-1$ with at most one vertex having internal degree $k-2$. This implies that $\Hat{G}$ has at least $\frac{|\mathcal{C}|(k-1) - 1}{2} - \frac{|\mathcal{C}|(k-1) - (k-2)}{2} = \frac{k-3}{2}$ more edges than $G^+$, and at least $\frac{k-1}{2}$ edges more than $G$. Also, note that the following loose upper bound $|\mathcal{C}| < k^k$ holds since $\mathcal{C}$ has no $P_{2k-1}$ as a subgraph.
 
 Therefore, using Lemma \ref{lem12} with $m = \frac{k-1}{2}$, $c = k^k$ and
$\epsilon < \frac{1}{2k^k}$,  $\lambda_1(\Hat{G}) - \lambda_1(G)>0$ for $n$ large enough. 
Hence,  $\lambda_1(\Hat{G}) > \lambda_1(G)$. However $G[L]$ has no $P_{2k-1}$ or $K_{1,k}$ as subgraphs, and hence $G$ has no $W_{2k+1}$ as a subgraph, which is a contradiction. Therefore, $G$ must contain the edge $\{l_1, r_1\}$.
 
 Finally, if $|\mathcal{C}| = 2k-1$, consider the following. Let $\mathcal{D}$ be a different connected component in $G^+[L]$. Then $|\mathcal{C} \cup \mathcal{D}| \geq 2k$. Let $\mathcal{E}:= \mathcal{C} \cup \mathcal{D} \ni l_1$.  Now similarly modify $G^+$ to $\Tilde{G}$ such that $\Tilde{G}[\mathcal{E}]$ has no $P_{2k-1}$ and every vertex in $\mathcal{E}$ has internal degree $k-1$ with at most one vertex having internal degree $k-2$. This again implies that $\Tilde{G}$ has at least $\frac{|\mathcal{E}|(k-1) - 1}{2} - \frac{|\mathcal{E}|(k-1) - (k-2)}{2} = \frac{k-3}{2}$ more edges than $G^+$, and at least $\frac{k-1}{2}$ edges more than $G$. Here too $|\mathcal{E}| < k^k$ holds since $D$ has no $P_{2k-1}$ as a subgraph.
 
 The rest of the proof follows the same arguments as above. Therefore, $E(L,R)$ must contain the edge $\{l_1, r_1\}$, and $e(L,R) = |L||R|$. 
 
 \end{proof}

 \begin{lemma}
If $(k-1)|L|$ is even then $G[L]$ is a $(k-1)$-regular graph and otherwise $G[L]$ is a $(k-1)$-nearly regular graph. Furthermore, $e(G[R]) = 1$.
\end{lemma}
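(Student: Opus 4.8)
The plan is to show that $G[L]$ is as edge-dense as possible subject to the $W_{2k+1}$-free constraint, and that $G[R]$ must carry exactly one edge. We already know from Lemma \ref{lem11} that $E(L,R)$ is complete, so the spectral radius is governed by how many edges we can pack into $G[L]$ and $G[R]$. Since $G$ is $W_{2k+1}$-free and $E(L,R)$ is complete, every vertex $l\in L$ has a neighborhood containing all of $R$, so $G[N(l)]$ contains $G[R]$ together with the $L$-neighbors of $l$ joined completely to $R$; this forces both that $G[L]$ is $K_{1,k}$-free (equivalently, has maximum degree $\le k-1$, as already established in Lemma \ref{lemma5.4}) and that $G[L]$ contains no $P_{2k-1}$ — otherwise a $(2k-1)$-vertex path in $G[L]$ together with two vertices of $R$ (using that $E(L,R)$ is complete and, if needed, the single edge of $G[R]$) would yield a $W_{2k+1}$. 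Symmetrically $G[R]$ is $P_{2k-1}$-free and $K_{1,k}$-free, but $|R|$ is large so the binding constraint in $R$ is something else: in fact $G[R]$ has at most one edge by Lemma \ref{lem10}.

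First I would record the structural constraints: $\Delta(G[L])\le k-1$, $G[L]$ has no $P_{2k-1}$, and $e(G[R])\le 1$. Then I would argue by a local-switching / extremality argument, exactly in the style of Lemmas \ref{lem12} and \ref{lem11}: if $G[L]$ is not $(k-1)$-regular (or nearly so when $(k-1)|L|$ is odd), then it has a vertex of internal degree $\le k-2$ that is not the single permitted deficient vertex, and we can add edges inside $L$ — possibly after rerouting a bounded number of existing edges inside $L$ — to strictly increase $e(G[L])$ while keeping $G[L]$ both $K_{1,k}$-free and $P_{2k-1}$-free; here I would invoke Proposition~2.1 of \cite{Y} (the nonemptiness of $\mathcal{U}_{k,|L|}$, valid since $|L|\ge n/2-\sqrt{3nk/2}\ge 2k$ by Lemma \ref{lemma5.5}) to know that the target dense configuration actually exists. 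By Lemma \ref{lem12} with $\mathcal{C}=L$ (or a large enough connected component containing the deficient vertex, bounded in size by $k^k$ since it is $P_{2k-1}$-free), any net gain of $m\ge 1$ edge inside $L$ strictly increases $\lambda_1$, contradicting $G\in\mathrm{SPEX}(n,W_{2k+1})$. The same $\lambda_1$-monotonicity argument applied to $G[R]$ shows $e(G[R])$ cannot be $0$: if $G[R]$ were empty, add a single edge in $R$; since $\Delta(G[R])$ becomes $1<k$ and $G[R]$ stays $P_{2k-1}$-free, no $W_{2k+1}$ is created (a center in $R$ would need $C_{2k}\subseteq G[L]$, impossible; a center in $L$ would need a long cycle through its neighborhood, impossible since that neighborhood induces $R$ plus $\le k-1$ completely-joined $L$-vertices and $G[R]$ has at most one edge), so $\lambda_1$ strictly increases, a contradiction. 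Combined with $e(G[R])\le 1$ from Lemma \ref{lem10}, this gives $e(G[R])=1$.

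The main obstacle is the verification that the edge-additions inside $L$ genuinely preserve $W_{2k+1}$-freeness and really do produce a strict increase in $\lambda_1$ — i.e., that the "obvious" densest $K_{1,k}$-free, $P_{2k-1}$-free graph on $|L|$ vertices is attainable by local modifications confined to $L$ (or a bounded component), and that the extremal $G$ cannot already be at that density for some subtler reason. Handling the parity split ($(k-1)|L|$ even versus odd) is the delicate bookkeeping: when $(k-1)|L|$ is odd a perfectly $(k-1)$-regular graph is impossible, so the correct target is a nearly $(k-1)$-regular graph with exactly one vertex of degree $k-2$, and one must check no further improvement is possible — this is precisely where the $o(1)$ eigenvector estimates from Lemma \ref{lem5.6} (all entries within $\epsilon'$ of $1$) are needed to make the edge-count-to-$\lambda_1$ comparison in Lemma \ref{lem12} tight enough to conclude. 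Once $G[L]\in\mathcal{U}_{k,|L|}$ and $e(G[R])=1$ and $E(L,R)$ complete, $G$ has exactly the claimed structure, and the bound $|s|\le 1$ from Theorem \ref{thm4} follows from the same lower-versus-upper bound comparison on $\lambda_1$ used in the $k=2$ case (Case analysis in Section \ref{section k2}) applied to the quotient matrix of the $L/R$ partition.
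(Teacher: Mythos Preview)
Your approach differs substantially from the paper's, and it contains a genuine gap in the step where you show $e(G[R])=1$.

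You argue that if $G[R]$ were empty we could add an edge $r_1r_2$ without creating a $W_{2k+1}$, writing ``a center in $R$ would need $C_{2k}\subseteq G[L]$, impossible.'' This is fine for centers $r\notin\{r_1,r_2\}$, since their neighborhood is still exactly $L$. But if the center is $r_1$, its new neighborhood is $L\cup\{r_2\}$, and a $C_{2k}$ there can be built from a $P_{2k-1}$ in $G[L]$ closed up through $r_2$ (who is complete to $L$). At this point in the argument you only know $G[L]$ is $C_{2k}$-free (from the neighborhood of any old $r\in R$), not $P_{2k-1}$-free; in fact, $P_{2k-1}$-freeness of $G[L]$ is exactly what one deduces \emph{after} knowing $e(G[R])=1$, so invoking it here is circular. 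The same circularity infects your argument that the modification in $L$ can be confined to a bounded set $\mathcal{C}$: you need bounded component sizes in $G[L]$, which again relies on $P_{2k-1}$-freeness.

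The paper's proof sidesteps this entirely by never modifying $G$. Instead it writes $A(G)$ as the adjacency matrix of a (possibly non-simple) graph with quotient matrix $Q=\begin{bmatrix}k-1 & |R|\\ |L| & 0\end{bmatrix}$, minus a nonnegative ``deficiency'' matrix $A(E_2)$, plus $A(G[R])$. Under the contradiction hypothesis (either $G[L]$ falls at least two below $(k-1)$-regular in degree sum, or $G[R]$ is empty) the matrix $A(E_2)$ has total weight at least $2$, so with all eigenvector entries in $(1-\epsilon,1]$ (Lemma~\ref{lem5.6}) one gets
\[
\lambda_1(G)\ \le\ \mu-\frac{2(1-\epsilon)^2}{\mathbf{x}^T\mathbf{x}}+\frac{2}{\mathbf{x}^T\mathbf{x}}\ <\ \mu+\frac{8\epsilon}{(1-\epsilon)^2n},
\]
which contradicts the lower bound $\lambda_1(G)>\mu+\frac{2(1-\epsilon)^2-\gamma}{\mathbf{z}^T\mathbf{z}}$ from \eqref{lower bound G} for $\epsilon$ small. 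No $W_{2k+1}$-freeness verification is needed because $G$ is never changed. Your local-switching strategy could likely be repaired by handling the $e(G[R])=0$ case via this Rayleigh-quotient comparison rather than by edge-addition, but as written the proposal does not close the gap.
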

\begin{proof}
By way of contradiction, assume that the statement is not true. Thus, either $G[L]$ has at least two vertices of degree not more than $k-2$ or at least one vertex of degree at most $k-3$, or $G[R]$ has no edge. Let $E_2$ be a set of edges such that $G \cup E_2$ is a (potentially not simple) graph that induces one edge on $R$ and where the graph induced by $L$ is $(k-1)$-regular. By the assumption, the sum of the entries in $E_2$ is at least $2$.  
Now, by Lemma \ref{lem5.6},
\begin{align*}
\lambda_1(G) = \frac{\mathbf{x}^TA(G) \mathbf{x}}{\mathbf{x}^T\mathbf{x}} &\leq \mu -  \frac{\mathbf{x}^TA(E_2) \mathbf{x}}{\mathbf{x}^T\mathbf{x}} + \frac{\mathbf{x}^TA(G[R]) \mathbf{x}}{\mathbf{x}^T\mathbf{x}} \\
&< \mu - \frac{2(1-\epsilon)^2}{\mathbf{x}^T\mathbf{x}} + \frac{2(1+\epsilon)^2}{\mathbf{x}^T\mathbf{x}}\\
&< \mu + \frac{8\epsilon}{(1-\epsilon)^2n}.
\end{align*}
This is a contradiction to \eqref{lower bound G} for $\epsilon$ small enough.
\end{proof}

Finally, we show that $|L|$ and $|R|$ differ by at most $2$. This completes the proof of Theorem \ref{thm4}.
 
 \begin{lemma}
\label{almost equipartite}
 For $G \in \textup{SPEX}(n, W_{2k+1})$, with maximum cut $L, R$, we have \[\frac{n}{2} - 1 \leq |L|, |R| \leq \frac{n}{2} + 1.\] 
 \end{lemma}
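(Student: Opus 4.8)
The plan is to pin down $|L|$ and $|R|$ by playing a lower bound on $\lambda_1(G)$ against an upper bound, exactly in the spirit of Lemma~\ref{lemma5.5} but now using the full structural description obtained in the preceding lemmas: $G$ is a complete bipartite graph with parts $L$, $R$, with a $(k-1)$-regular (or nearly $(k-1)$-regular) graph embedded in $G[L]$ and a single edge embedded in $G[R]$. Write $|L| = \frac{n}{2} + q$ and $|R| = \frac{n}{2} - q$ (ignoring parity/floor issues for the sketch), and suppose for contradiction that $|q| \geq 2$, equivalently $|q| > 1$.

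First I would produce a clean upper bound on $\lambda_1(G)$ in terms of $q$. Since $G$ is a subgraph of $K_{|L|,|R|}$ joined-style plus a graph of max degree $k-1$ on $L$ and one edge on $R$, Lemma~\ref{thm5} gives $\lambda_1(G) \leq \lambda_1(B)$ where $B = \begin{bmatrix} k-1 & |R| \\ |L| & 0 \end{bmatrix}$ — wait, one must be slightly careful because the single edge in $R$ also contributes; but it contributes only $O(1/n)$ to the Rayleigh quotient (by Lemma~\ref{lem5.6}, all eigenvector entries are within $\epsilon'$ of $1$), so I get
\[
\lambda_1(G) \leq \frac{k-1 + \sqrt{(k-1)^2 + 4|L||R|}}{2} + O\!\left(\frac{1}{n}\right) = \frac{k-1 + \sqrt{(k-1)^2 + n^2 - 4q^2}}{2} + O\!\left(\frac{1}{n}\right).
\]
Second, I would recall the lower bound from \eqref{lower bound G} in the proof of Lemma~\ref{lemma first lower bound quotient matrix}: the construction $\tilde G$ there is (up to the $K_2$ in the small part) precisely of the allowed form with parts as balanced as possible, and it gives $\lambda_1(G) \geq \frac{k-1 + \sqrt{(k-1)^2 + n^2 - 1}}{2} + \frac{c}{n}$ for an explicit positive constant $c$ (coming from the $\frac{2\eta^2}{\mathbf z^T\mathbf z}$ term and the edge in the small part). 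Comparing the two bounds: if $|q| \geq 2$ then $4q^2 \geq 16$, so the square root in the upper bound is smaller than the one in the lower bound by an amount of order $\Theta(1/n)$ (since $\sqrt{n^2 - a} - \sqrt{n^2 - b} \approx \frac{b-a}{2n}$), and for $n$ large this $\Theta(1/n)$ gap dominates the $O(1/n)$ error terms with the right constants, yielding $\lambda_1(G) < \mathrm{spex}(n, W_{2k+1})$, a contradiction.

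The main obstacle is bookkeeping the $1/n$-order terms precisely enough that the comparison actually closes: both the lower bound from the construction and the upper bound from Lemma~\ref{thm5} differ from $\frac{k-1+\sqrt{(k-1)^2+n^2}}{2}$ only at order $1/n$, so a crude estimate is not enough — I need to track the constant hidden in the contribution of the single edge in $R$ and the constant $c$ from the construction, and verify $c$ (roughly $2 - \gamma$ from \eqref{lower bound G}, plus the gain from the edge in the small part of $\tilde G$) is large enough to beat the loss. A convenient way to organize this is to compare directly the quotient matrices of $G$ and of the extremal-candidate graph $\tilde G$ rather than both against the idealized matrix, so that the $\sqrt{(k-1)^2+n^2}$ main term cancels exactly and one only ever compares $O(1/n)$ quantities; the inequality $\lambda_1(B_{|L|,|R|}) + (\text{edge bonus}) \geq \lambda_1(B_{\lceil n/2\rceil, \lfloor n/2\rfloor}) + (\text{edge bonus})$ then reduces to showing $4q^2$ costs more than any rearrangement of the $O(1)$ additive perturbations can recover, which is immediate once $|q|\ge 2$. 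As with the $k=2$ analysis in Section~\ref{section k2}, a small amount of extra care is needed in the residual case $|q|=1$ versus $q=0$ to decide which is actually extremal, but Theorem~\ref{thm4} only claims $|s|\le 1$, so for this lemma it suffices to rule out $|q|\ge 2$.
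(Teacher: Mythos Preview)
Your proposal is correct and follows essentially the same route as the paper's proof: upper bound $\lambda_1(G)$ by $\lambda_1\!\left(\begin{smallmatrix} k-1 & |R| \\ |L| & 0 \end{smallmatrix}\right) + O(1/n)$ via Lemma~\ref{thm5} together with the eigenvector bounds of Lemma~\ref{lem5.6} (the single edge in $R$ contributing $\frac{2}{\mathbf{x}^T\mathbf{x}} \le \frac{2}{(1-\epsilon)^2 n}$), lower bound $\lambda_1(G)$ by \eqref{lower bound G}, and compare to force $|q|\le 1$. The paper is simply terser about the $1/n$ bookkeeping you flag---it writes the comparison and says ``simplifying shows $|s|\le 1$''---but the argument and the ingredients are the same.
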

 
\begin{proof}
Let  $|L| = \frac{n}{2} + s$ and $|R| = \frac{n}{2} - s$. We will show $|s|\leq 1$. Let  $B = \begin{bmatrix} 
k-1 & \frac{n}{2} - s\\
\frac{n}{2} + s & 0\\
\end{bmatrix}$. We know that $G$ is a complete bipartite graph with maximum cut $L, R$ and $e(G[R]) \leq 1$ and $G[L]$ a $k-1$ regular or nearly graph. Hence, $\lambda_1(G) \leq \lambda_1(B) + \frac{2}{\mathbf{x}^T\mathbf{x}}$ by Lemma \ref{thm5}. Combining this with \eqref{lower bound G} gives 

\begin{align*}
\frac{k-1 + \sqrt{(k-1)^2 + n^2-1}}{2} &< \frac{k-1 + \sqrt{(k-1)^2 + (n^2-4s^2)}}{2} + \frac{2}{\mathbf{x}^T\mathbf{x}} \\& \leq \frac{k-1 + \sqrt{(k-1)^2 + (n^2-4s^2)}}{2} + \frac{2}{(1-\epsilon)^2n}.
\end{align*}

Simplifying shows that $|s|\leq 1$ for $n$ large enough.

\end{proof}
\section{The sizes of $|L|$ and $|R|$ in Theorem \ref{thm4}}\label{section partition sizes}
In this section we will show that when $k$ is odd, then $|L| = \ceil{\frac{n}{2}}$ and when $k $ is even, then $|L|$ is constrained as follows, depending on the value of $n \pmod 4$.
\begin{enumerate}
    \item[(i)] For $n \equiv 0 \pmod{4}$, $|L| = \frac{n}{2}$;
    \item[(ii)] For $n \equiv 1 \pmod{4}$, $|L| = \floor{\frac{n}{2}}$;
    \item[(iii)] For $n \equiv 2 \pmod{4}$, $|L| \in \{\frac{n}{2}, \frac{n}{2} + 1\}$;
    \item[(iv)] For $n \equiv 3 \pmod{4}$, $|L| = \ceil{\frac{n}{2}}$.
\end{enumerate}
We will now try to fine-tune our argument a bit to show (i), (iii), and (iv), by using quotient graphs with three parts instead of just two parts, as done so far.

Let \[\Pi_s = \begin{bmatrix}
k-1 & \frac{n}{2} - s - 2 & 2 \\
\frac{n}{2} + s & 0 & 0 \\
\frac{n}{2} + s & 0 & 1 
\end{bmatrix}\] and $P_s(\lambda) = \lambda^3 - k \lambda^2 - \bigg(\frac{n^2}{4} - s^2 - k + 1 \bigg) \lambda + \frac{n^2}{4} - s^2 - n - 2s$, be its characteristic polynomial. Then for $\alpha \in \{0.5, 1\}$, we have $P_{\alpha}(\lambda) - P_{- \alpha}(\lambda) = -4\alpha$; and $P_0(\lambda) - P_1(\lambda) = 3 - \lambda < 0$, for $\lambda$ near $\lambda_1(\Pi_0)$. Since the coefficient of $\lambda^3$ in $P_s$ is positive, all three roots of $P_s$ are real and the largest root is simple, it implies that $\lambda_1(\Pi_{\alpha}) > \lambda_1(\Pi_{-\alpha})$ and $\lambda_1(\Pi_0) > \lambda_1(\Pi_1)$.

Let $\mathcal{G}_s$ be the family of graphs that consist of a complete bipartite graph with parts of size $\frac{n}{2} + s$ and $\frac{n}{2} - s$ along with a graph from $\mathcal{U}_{k, n/2+s}$ embedded in the first part with $\frac{n}{2}+s$ vertices and a single edge embedded in the other part. For any arbitrary graph $G_s \in \mathcal{G}_s$, we can say that $\lambda_1(G_s) \leq \lambda_1(\Pi_s)$ (as $\Pi_s$ is a quotient matrix for the adjacency matrix of $G_s$ with at most one loop added to make every vertex in the first part have same degree) with equality if and only if $(k-1)(\frac{n}{2} + s)$ is even. We know by Theorem \ref{thm4} that $G \in \mathcal{G}_s$ for some $s$ satisfying $-1 \leq s \leq 1$. Then, the previous paragraph implies that $s \in \{0, 0.5\}$ (i.e. $|L| = \ceil{\frac{n}{2}}$), whenever $k$ is odd or $\ceil{\frac{n}{2}}$ is even (implying (i) and (iv)). Further, $s \in \{0, 1\}$ (i.e. $|L| = \frac{n}{2}$ or $\frac{n}{2}+1$), if $\frac{n}{2}$ is odd ( implying (iii)).

 When $k$ is even and $n\equiv 1,2 \pmod{4}$ we argue similarly to Lemma \ref{lemma first lower bound quotient matrix}. Let 
 \[
 Q_s = \begin{bmatrix}
 k-1 & \frac{n}{2}-s\\
 \frac{n}{2} +s & 0
 \end{bmatrix}.
 \]
 Let $uv$ be the edge that is embedded in $R$ and let $u_\ell$ be the vertex that has internal degree $k-2$ in the case that $(\frac{n}{2}+s)(k-1)$ is odd. Let $\mu_s$ be the spectral radius of $Q_s$ with eigenvector $\begin{bmatrix}
 1 & \eta
 \end{bmatrix}^T$ and let $\mathbf{z}$ be the $n$-dimensional vector where the first $\frac{n}{2}+s$ entries are $1$ and the last $\frac{n}{2}-s$ entries are $\eta$. Note that $\eta = 1-o(1)$ as $n\to \infty$. Let $E$ be the adjacency matrix of the edge $uv$ and (if $u_\ell$ exists) the loop $u_\ell$ with weight $-1$. That is, $E$ is a matrix with exactly two entries equal to $1$ and if $(n/2+s)(k-1)$ is odd a single diagonal entry equal to $-1$. Then 
 \[
 \frac{\mathbf{z}^T(A(G)-E)\mathbf{z}}{\mathbf{z}^T\mathbf{z}} +  \frac{\mathbf{z}^TE\mathbf{z}}{\mathbf{z}^T\mathbf{z}} =  \frac{\mathbf{z}^T(A(G))\mathbf{z}}{\mathbf{z}^T\mathbf{z}}\leq\lambda_1  = \frac{\mathbf{x}^T(A(G))\mathbf{x}}{\mathbf{x}^T\mathbf{x}} =  \frac{\mathbf{x}^T(A(G)-E)\mathbf{x}}{\mathbf{x}^T\mathbf{x}} +   \frac{\mathbf{x}^TE\mathbf{x}}{\mathbf{x}^T\mathbf{x}}.
 \]
 Since $Q_s$ is the quotient matrix of an equitable partition of the graph $G$ minus edge $uv$ and plus (if $u_\ell$ exists) loop $u_\ell$, and since $\mathbf{z}$ is an eigenvector for $A(G) - E$, we have 
 \begin{equation}\label{final upper and lower bounds}
 \mu_s + \frac{\mathbf{z}^TE\mathbf{z}}{\mathbf{z}^T\mathbf{z}} \leq \lambda_1 \leq \mu_s + \frac{\mathbf{x}^T E\mathbf{x}}{\mathbf{x}^T\mathbf{x}}
 \end{equation}
 
 \begin{proposition}
 When $n\equiv 1 \pmod{4}$ and $k$ is even, then $s=-1/2$.
 \end{proposition}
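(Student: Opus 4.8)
The plan is as follows. Since $n$ is odd, $\tfrac n2+s$ can only be an integer if $s$ is a half-integer, and combined with $|s|\le 1$ from Theorem~\ref{thm4} this leaves only $s\in\{-\tfrac12,\tfrac12\}$; so it suffices to rule out $s=\tfrac12$. The engine of the proof is the identity $\mu_{1/2}=\mu_{-1/2}=:\mu$, which holds because $\mu_s=\tfrac{k-1+\sqrt{(k-1)^2+n^2-4s^2}}{2}$ depends on $s$ only through $4s^2=1$. Thus both candidate structures have the same ``main term'' in their spectral radius, and the comparison is decided by the $O(1/n)$ correction coming from the unique edge $uv$ embedded in $R$ and from the possible degree-$(k-2)$ vertex $u_\ell$ in $L$, recorded by the matrix $E$ in \eqref{final upper and lower bounds}.

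First I would pin down the parity. As $k$ is even, $k-1$ is odd, so the parity of $(k-1)|L|$ equals that of $|L|$. For $s=-\tfrac12$ we have $|L|=\tfrac{n-1}{2}$, which is even since $n\equiv 1\pmod 4$; hence $(k-1)|L|$ is even, $G[L]$ is genuinely $(k-1)$-regular, and $u_\ell$ does not exist, so $E$ consists of the single edge $uv$ alone. For $s=\tfrac12$ we have $|L|=\tfrac{n+1}{2}$, which is odd; hence $(k-1)|L|$ is odd, $G[L]$ is nearly $(k-1)$-regular, and the $-1$-weighted loop at $u_\ell$ is present in $E$. So the loop penalty appears precisely in the case $s=\tfrac12$.

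Next I would exhibit a good $W_{2k+1}$-free competitor with $s=-\tfrac12$. Since $(k-1)\cdot\tfrac{n-1}{2}$ is even and $\tfrac{n-1}{2}\ge 2k$ for $n$ large, Proposition~2.1 of \cite{Y} provides a $(k-1)$-regular $P_{2k-1}$-free graph on $\tfrac{n-1}{2}$ vertices, so $\mathcal G_{-1/2}\neq\emptyset$; fix $G_0\in\mathcal G_{-1/2}$. Writing $A(G_0)=\big(A(G_0)-E_0\big)+E_0$ with $E_0$ the adjacency matrix of the edge $uv$, the partition $(L,R)$ is equitable for $A(G_0)-E_0$ with quotient $Q_{-1/2}$, so the lift $\mathbf z$ of its Perron vector is an eigenvector of $A(G_0)-E_0$ for $\mu$, and $\frac{\mathbf z^{T}A(G_0)\mathbf z}{\mathbf z^{T}\mathbf z}=\mu+\frac{2\eta^2}{\mathbf z^{T}\mathbf z}\ge \mu+\frac{2-o(1)}{n}$, using $\eta=1-o(1)$ and $\mathbf z^{T}\mathbf z\le n$. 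Hence $\mathrm{spex}(n,W_{2k+1})\ge\lambda_1(G_0)\ge \mu+\frac{2-o(1)}{n}$.

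Finally, suppose for contradiction that some $G\in\mathrm{SPEX}(n,W_{2k+1})$ has $s=\tfrac12$. Normalise its Perron vector $\mathbf x$ so that $\max_v x_v=1$; by Lemma~\ref{lem5.6} every $x_v>1-\epsilon'$. Then the upper bound in \eqref{final upper and lower bounds}, with $E$ now containing the $-1$-weighted loop at $u_\ell$, gives
\[
\lambda_1(G)\le \mu+\frac{\mathbf x^{T}E\mathbf x}{\mathbf x^{T}\mathbf x}=\mu+\frac{2x_ux_v-x_{u_\ell}^2}{\mathbf x^{T}\mathbf x}\le \mu+\frac{2-(1-\epsilon')^2}{n(1-\epsilon')^2}\le \mu+\frac{1+o(1)}{n}.
\]
For $n$ large this is strictly smaller than the lower bound $\mu+\frac{2-o(1)}{n}$ for $\lambda_1(G_0)$, contradicting $\lambda_1(G)=\mathrm{spex}(n,W_{2k+1})\ge\lambda_1(G_0)$. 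Therefore $s=-\tfrac12$. I do not anticipate a genuine obstacle here: the only delicate points are fixing the auxiliary constants $\epsilon,\epsilon',p$ and then $n$ in the correct order so that the $o(1)$ terms are controlled, and verifying the parity claim of the second paragraph — which is precisely what makes the proposition true, as the base term $\mu$ is identical for $s=\pm\tfrac12$ and only the $s=-\tfrac12$ structure avoids the loop penalty when $n\equiv 1\pmod 4$.
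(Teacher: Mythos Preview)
Your proof is correct and follows essentially the same approach as the paper: both use the parity observation that $u_\ell$ exists for $s=\tfrac12$ but not for $s=-\tfrac12$, bound $\lambda_1(G_{-1/2})$ from below by $\mu+\frac{2-o(1)}{n}$ via the $\mathbf z$-vector and the lower half of \eqref{final upper and lower bounds}, bound $\lambda_1(G_{1/2})$ from above by $\mu+\frac{1+o(1)}{n}$ via Lemma~\ref{lem5.6} and the upper half of \eqref{final upper and lower bounds}, and conclude from $\mu_{1/2}=\mu_{-1/2}$. You supply somewhat more detail on the parity check and on the nonemptiness of $\mathcal G_{-1/2}$, but the argument is the same.
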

 \begin{proof}
 If $n\equiv 1 \pmod{4}$ then the vertex $u_\ell$ exists if $s=1/2$ and does not exist if $s=-1/2$. Let $G_{1/2}$ and $G_{-1/2}$ be arbitrary graphs in $\mathcal{G}_s$ in the cases that $s=1/2$ and $s=-1/2$ respectively. Then by \eqref{final upper and lower bounds}, we have 
 \[
 \lambda_1(G_{-1/2}) \geq \mu_{-1/2} + \frac{2\eta^2}{\mathbf{z}^T\mathbf{z}} = \mu_{-1/2} + \frac{2-o(1)}{n}.
 \]
 On the other hand, if $G_{1/2}$ were extremal we would have by \eqref{final upper and lower bounds} and Lemma \ref{lem5.6} that 
 \[
 \lambda_1(G_{1/2}) \leq \mu_{1/2} + \frac{2\mathbf{x}_u \mathbf{x}_v  - \mathbf{x_{u_\ell}}^2}{\mathbf{x}^T\mathbf{x}} = \mu_{1/2} + \frac{1+o(1)}{n}.
 \]
 Noting that $\mu_{-1/2} = \mu_{1/2}$ completes the proof.
 
 \end{proof}
 
 \begin{proposition}
 When $n\equiv{2} \pmod{4}$ we have $\mathrm{spex}(n, W_{2k+1}) = \frac{k-1+\sqrt{(k-1)^2+n^2}}{2} + \frac{1+o(1)}{n}$.
 \end{proposition}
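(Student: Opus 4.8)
The plan is to show that, regardless of which of the two admissible partition sizes actually occurs in the extremal graph, the spectral radius equals $\mu_0 + \tfrac{1+o(1)}{n}$, where $\mu_0 := \tfrac{k-1+\sqrt{(k-1)^2+n^2}}{2}$ is the spectral radius of $Q_0$. (Here $k$ is even; for odd $k$ the extremal value was already pinned down above as the largest root $\lambda_1(\Pi_0)$ of a cubic.) Since $n\equiv 2\pmod 4$ forces $\tfrac n2$ to be odd, the $\Pi_s$-comparison in the discussion preceding this proposition (applied with $\alpha=1$) shows that an extremal graph lies in $\mathcal{G}_0\cup\mathcal{G}_1$, i.e.\ $|L|\in\{\tfrac n2,\tfrac n2+1\}$; so it suffices to estimate $\lambda_1$ in these two cases using \eqref{final upper and lower bounds}. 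Throughout I would use that $\eta=1-o(1)$, that by Lemma \ref{lem5.6} every entry of the Perron vector $\mathbf{x}$ is $1-o(1)$, and hence that $\mathbf{z}^T\mathbf{z}=n(1-o(1))$ and $\mathbf{x}^T\mathbf{x}=n(1-o(1))$.

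First I would treat $|L|=\tfrac n2+1$ (the case $s=1$). Since $k$ is even and $\tfrac n2+1$ is even, $(k-1)(\tfrac n2+1)$ is even, so $G[L]$ is $(k-1)$-regular, the vertex $u_\ell$ does not exist, and in \eqref{final upper and lower bounds} the matrix $E$ is simply the adjacency matrix of the single edge $uv\subseteq R$. Then $\mathbf{z}^TE\mathbf{z}=2\eta^2=2-o(1)$ and $\mathbf{x}^TE\mathbf{x}=2\mathbf{x}_u\mathbf{x}_v=2-o(1)$, so \eqref{final upper and lower bounds} gives $\lambda_1=\mu_1+\tfrac{2+o(1)}{n}$. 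To finish this case I would compare $\mu_0$ and $\mu_1$ directly: from $\mu_s^2-(k-1)\mu_s=\tfrac{n^2}{4}-s^2$ one gets $(\mu_0-\mu_1)\bigl(\mu_0+\mu_1-(k-1)\bigr)=1$, and since $\mu_0+\mu_1-(k-1)=n+O(1/n)$ this gives $\mu_1=\mu_0-\tfrac1n(1+o(1))$. Hence $\lambda_1=\mu_0-\tfrac{1+o(1)}{n}+\tfrac{2+o(1)}{n}=\mu_0+\tfrac{1+o(1)}{n}$.

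Next I would treat $|L|=\tfrac n2$ (the case $s=0$). Now $\tfrac n2$ is odd while $k-1$ is odd, so $(k-1)\tfrac n2$ is odd; thus the internal-degree-$(k-2)$ vertex $u_\ell\in L$ exists, and $E$ is the adjacency matrix of the edge $uv\subseteq R$ together with a diagonal entry $-1$ at $u_\ell$. Consequently $\mathbf{z}^TE\mathbf{z}=2\eta^2-1=1-o(1)$ (the loop sits at $u_\ell\in L$, whose $\mathbf{z}$-entry is $1$) and $\mathbf{x}^TE\mathbf{x}=2\mathbf{x}_u\mathbf{x}_v-\mathbf{x}_{u_\ell}^2=1+o(1)$, using that $\mathbf{x}_u,\mathbf{x}_v,\mathbf{x}_{u_\ell}$ are all $1-o(1)$. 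Plugging into \eqref{final upper and lower bounds} immediately yields $\lambda_1=\mu_0+\tfrac{1+o(1)}{n}$.

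Since by Theorem \ref{thm4} the spectral radius of the extremal graph equals $\lambda_1$ of a graph in $\mathcal{G}_0$ or $\mathcal{G}_1$, combining the two cases gives $\mathrm{spex}(n,W_{2k+1})=\mu_0+\tfrac{1+o(1)}{n}$, as claimed. I expect the only real care needed to be the lower-order bookkeeping: one must check that the error terms ($\eta-1$, the $\epsilon'$ from Lemma \ref{lem5.6}, and $\mathbf{x}^T\mathbf{x}/n-1$) are genuinely $o(1)$, so that $2\eta^2-1$, $2\mathbf{x}_u\mathbf{x}_v-\mathbf{x}_{u_\ell}^2$ and $\mu_0+\mu_1-(k-1)$ have the claimed asymptotics---in particular so that, in the $s=1$ case, the gain $+2/n$ exactly cancels the loss $\mu_1-\mu_0$ down to $+1/n$. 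This is routine but is the single delicate point of the argument.
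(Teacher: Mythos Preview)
Your proposal is correct and follows essentially the same approach as the paper's own proof: you apply the sandwich \eqref{final upper and lower bounds} separately to the two admissible cases $s=0$ and $s=1$, using that $u_\ell$ exists precisely when $s=0$, and then reconcile the two answers. The only difference is that you make explicit the computation $\mu_0-\mu_1=\tfrac{1+o(1)}{n}$ via the quadratic $\mu_s^2-(k-1)\mu_s=\tfrac{n^2}{4}-s^2$, whereas the paper simply asserts that the two resulting quantities differ by $o(1/n)$; your version is slightly more detailed but otherwise identical in spirit.
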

 
 \begin{proof}
 Let $G_1$ be the graph when $s=1$ and $G_{0}$ be the graph when $s=0$. Note that the vertex $u_\ell$ exists in $G_0$ and does not exist in $G_1$. Hence if $G_1$ is extremal, by \eqref{final upper and lower bounds} we have
 \[
 \lambda_1(G_1) = \mu_1 + \frac{2-o(1)}{n},
 \]
 and if $G_0$ is extremal we have 
 \[
 \lambda_1(G_0) = \mu_0 + \frac{1+o(1)}{n}.
 \]
 Comparing these two quantities shows that they differ by $o(1/n)$.
 \end{proof}
 
 Finally we notice that if $n\not\equiv 2 \pmod{4}$ or if $k$ is odd, then the extremal graph with the appropriately chosen $s$ has an equitable partition with $\Pi_s$ as the quotient matrix, and hence $\lambda_1(G)$ is equal to the largest root of $P_s(\lambda)$ in these cases.

\section{Conclusion}\label{section conclusion}
In this paper, we determined the structure of the graphs in $\mathrm{SPEX}(n, W_{2k+1})$ for all $k\not\in \{4,5\}$ and for $n$ large enough. We believe that the extremal graphs when $k\in \{4,5\}$ have the same structure, and it would be interesting to prove this. The main technical hurdle is to prove Lemma \ref{lem9} when $k\in \{4,5\}$. We note that it is a bit delicate. For example, for $k=4$, the join of a disjoint union of triangles with a matching has spectral radius very close to a complete bipartite graph with disjoint copies of $K_4$ in one side and a single edge in the other side.

It would also be interesting to determine how large $n$ needs to be as a function of $k$ for our theorems to hold. Our use of the Triangle Removal Lemma means that our ``sufficiently large $n$" is likely much larger than it needs to be.

Finally, we end with a general conjecture.

\begin{conjecture}
Let $F$ be any graph such that the graphs in $\mathrm{EX}(n, F)$ are Tur\'an graphs plus $O(1)$ edges. Then $\mathrm{SPEX}(n, F) \subset \mathrm{EX}(n, F)$ for $n$ large enough.
\end{conjecture}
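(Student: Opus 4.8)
Write $r+1=\chi(F)$, so that the Tur\'an graph $T_{n,r}$ is $F$-free and, by hypothesis, every member of $\mathrm{EX}(n,F)$ is a complete $r$-partite graph with almost-equal parts plus a set of at most $C=C(F)$ further edges. Fix $G\in\mathrm{SPEX}(n,F)$ with Perron eigenvector $\mathbf{x}$ scaled so $\max_v x_v=1$, and put $\lambda=\lambda_1(G)$; since $G$ is extremal, $\lambda\ge\lambda_1(T_{n,r})\ge(1-\tfrac1r)n-O(1)$. The plan is to follow the architecture of Sections \ref{structural lemmas section}--\ref{section k3}: (a) a stability step that confines $G$ to a complete $r$-partite ``skeleton'' $K_{n_1,\dots,n_r}$ up to a \emph{bounded} edge defect; (b) an eigenvector-flatness step giving $x_v=1-o(1)$ for every $v$, so that local edge surgeries move $\lambda_1$ in the direction predicted by $\lambda_1(G')-\lambda_1(G)\ge\mathbf{x}^T(A(G')-A(G))\mathbf{x}/(\mathbf{x}^T\mathbf{x})$; and (c) a finite optimization fixing the part sizes and forcing $G\in\mathrm{EX}(n,F)$.

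For (a) I would start from the spectral stability form of the Erd\H{o}s--Stone--Simonovits theorem (in the spirit of Nikiforov's spectral Simonovits theorem \cite{Nsat}): an $F$-free graph with $\lambda_1\ge(1-\tfrac1r)n-o(n)$ admits a partition $V(G)=V_1\sqcup\dots\sqcup V_r$ with all but $o(n^2)$ pairs crossing and each $|V_i|=n/r+o(n)$. The real work is to \emph{upgrade} this $o(n^2)$ defect (internal edges plus missing crossing pairs) to $O(1)$. The mechanism should mirror Lemmas \ref{lemma9} and \ref{lemma5.4}: if a vertex $v$ had high degree and its neighborhood still contained a sufficiently rich near-complete $r$-partite configuration after discarding the few atypical vertices, a supersaturation/embedding argument would plant a copy of $F$ (using only that $\chi(F)=r+1$, that $|V(F)|$ is bounded, and that $F$ embeds into a large enough blow-up of $K_{r+1}$); conversely, an internal edge, or a missing crossing pair, incident to a flat-eigenvector vertex can be deleted or added to strictly increase $\lambda_1$, contradicting extremality. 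Iterating this bootstrap, exactly as in Lemma \ref{lem9}, should drive $\sum_i e(G[V_i])$ together with the number of absent crossing pairs down to an absolute constant; it is here that the hypothesis $\mathrm{ex}(n,F)=e(T_{n,r})+O(1)$ --- rather than merely $+o(n^2)$ --- must be fed back in, since it is precisely what forbids a linear-sized sparse graph from surviving inside one part (which is what does happen for $W_{2k+1}$, $k\ge3$, and is why the conclusion fails there).

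Granting (a), step (b) is routine: all degrees are $(1-\tfrac1r)n-O(1)$, so the argument of Lemma \ref{lem5.6} yields $x_v=1-o(1)$ for every $v$. For (c) I would compare quotient matrices as in Lemma \ref{thm5} and Lemma \ref{almost equipartite}: the spectral radius of an $r$-partite skeleton with a bounded edge perturbation is, to within $O(1/n)$, maximized at the balanced partition, which forces $|n_i-n/r|=O(1)$; and on the finite family of $F$-free graphs that are a balanced-to-$O(1)$ skeleton plus $\le C$ edges, one more admissible edge strictly increases $\lambda_1$ (it contributes $1-o(1)$ to the Rayleigh quotient against an $O(1/n)$ correction from changing part sizes), so $G$ must carry the maximum number of such edges. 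By the hypothesis that number is $\mathrm{ex}(n,F)-e(\text{balanced skeleton})$ and is realized exactly by the members of $\mathrm{EX}(n,F)$ with optimal skeleton; hence $G\in\mathrm{EX}(n,F)$.

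The decisive obstacle is the defect-upgrade in (a). The corresponding work for odd wheels occupied all of Section \ref{structural lemmas section} and exploited features special to $W_{2k+1}=K_1+C_{2k}$ --- that $W_{2k+1}$-freeness is equivalent to every neighborhood being $C_{2k}$-free, the even-circuit bound on $\mathrm{ex}(n,C_{2k})$, and the consequent $K_{1,k}$-freeness of the defect graphs $G[S]$ and $G[T]$. For a general $F$ one needs an abstract substitute: a quantitative embedding lemma to the effect that a vertex of degree $(1-\tfrac1r)n-O(1)$ whose neighborhood contains a large $(r{-}1)$-partite-dense core, present together with more than a constant amount of defect elsewhere in $G$, already forces a copy of $F$. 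Proving this with an \emph{absolute-constant} threshold --- not $o(n)$, nor $n^{o(1)}$ --- is the genuine content, and is where the $O(1)$ in the hypothesis on $\mathrm{ex}(n,F)$ has to do its job. One should also keep in mind the parity phenomenon seen in Section \ref{section partition sizes}: when the optimal part sizes create a parity obstruction, $\mathrm{EX}(n,F)$ may contain several non-isomorphic graphs with the same spectral radius up to $o(1/n)$, so the inclusion ``$\subset$'' is to be read as a statement about admissible structures rather than a claim that $\mathrm{SPEX}(n,F)$ is a single graph.
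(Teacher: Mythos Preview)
This statement is a \emph{conjecture} in the paper (stated in the concluding section), not a theorem; the paper offers no proof and only remarks that ``similar methods to what is in this paper and in \cite{FG} would help to prove the conjecture for any fixed graph $F$ satisfying the hypotheses.'' So there is no proof in the paper to compare against, and the relevant question is whether your proposal actually proves the statement. It does not, and you effectively say so yourself: you flag the ``decisive obstacle'' as the defect-upgrade in step (a) --- passing from the $o(n^2)$ defect coming out of spectral stability down to an $O(1)$ defect --- and concede that obtaining the required embedding lemma ``with an absolute-constant threshold \dots\ is the genuine content.'' That diagnosis is accurate, and that content is precisely what you have not supplied. In the odd-wheel case the paper succeeds because $W_{2k+1}$-freeness is equivalent to every neighborhood being $C_{2k}$-free, which via the even-circuit theorem and a direct $K_{1,k}$-freeness argument gives hard quantitative control on $G[S]$ and $G[T]$; for a general $F$ with $\chi(F)=r+1$ there is no such local forbidden-subgraph translation, and your sketch does not explain how the hypothesis $\mathrm{ex}(n,F)=e(T_{n,r})+O(1)$ is to be converted into the needed rigidity for the \emph{spectral} extremal graph.

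There is also a smaller gap in step (c): ``one more admissible edge strictly increases $\lambda_1$'' presumes that an $F$-free graph on the correct skeleton with fewer than $\mathrm{ex}(n,F)$ edges is not edge-maximal, which need not hold; one must instead argue that the $O(1)$ extra edges can be \emph{rearranged} (not merely augmented) to reach an extremal configuration while increasing $\lambda_1$, and this requires more than the Rayleigh-quotient inequality you invoke. Your outline is a reasonable roadmap and is consistent with the paper's own hint, but the conjecture remains open.
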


Nikiforov's result \cite{Nsat} shows that this is true for critical graphs, when the $O(1)$ is replaced by $0$. We believe that similar methods to what is in this paper and in \cite{FG} would help to prove the conjecture for any fixed graph $F$ satisfying the hypotheses.

	\bibliographystyle{plain}
	\bibliography{bib.bib}

\end{document}